\documentclass[11pt,reqno]{amsart}
\usepackage{amsfonts,amsmath,amssymb,graphicx,comment}
\usepackage{verbatim}
\usepackage{xcolor}
\usepackage{fancyhdr}
\usepackage[utf8]{inputenc}
\usepackage{amsmath}
\usepackage{amsfonts}
\usepackage{amssymb}
\usepackage{amsthm}
\usepackage{pictexwd, dcpic}
\usepackage{fancyhdr}
\usepackage{mathrsfs}
\usepackage{latexsym,mathtools}
\usepackage{bm, enumerate}
\usepackage{graphicx}
\usepackage{color,soul}
\usepackage{xcolor}
\usepackage{dsfont}
\usepackage{todonotes}
\usepackage{comment}

\newtheorem{thm}{Theorem}[section]
\newtheorem{cor}[thm]{Corollary}
\newtheorem{lem}[thm]{Lemma}

\newtheorem{prop}[thm]{Proposition}

\theoremstyle{definition}
     
\theoremstyle{remark}
\newtheorem{rem}[thm]{\bf{Remark}}
\numberwithin{equation}{section}

\newcommand{\beas}{\begin{eqnarray*}}
	\newcommand{\eeas}{\end{eqnarray*}}
\newcommand{\bes} {\begin{equation*}}
	\newcommand{\ees} {\end{equation*}}
\newcommand{\be} {\begin{equation}}
	\newcommand{\ee} {\end{equation}}
\newcommand{\bea} {\begin{eqnarray}}
	\newcommand{\eea} {\end{eqnarray}}

\usepackage[colorlinks=true,
    citecolor=red,
    linkcolor=blue,
    urlcolor=blue,backref=page]{hyperref}

\title{}

\begin{document}
\title[Inner multipliers, similarity and invariant subspaces]{Inner multipliers, similarity and invariant subspaces for a class of Harmonically weighted Dirichlet spaces}
\author[S. Ghara]{Soumitra Ghara}
\address{Department of Mathematics\\
Indian Institute of Technology  Kharagpur, Midnapore-721302, India}
   \email{soumitra@maths.iitkgp.ac.in\\ ghara90@gmail.com}

   \author[A. Patra]{Avipsa Patra}
\address{Department of Mathematics\\
Indian Institute of Technology  Kharagpur, Midnapore-721302, India}
   \email{avipsapatra.24@kgpian.iitkgp.ac.in}
\thanks{The work of the first named author  is supported by  ARG-MATRICS grant by the ANRF (File No: ANRF/ARGM/2025/001583/MTR) and INSPIRE Faculty Fellowship (DST/INSPIRE/04/2021/002555). The work of the second named author is supported by Institute Fellowship from IIT Kharagpur, India.}

\subjclass[2020]{}
\keywords{Dirichlet-type space, multiplier algebra, inner function,  similarity}
\subjclass[2020]{Primary: 47B38, 30J05 Secondary: 31C25, 46E20.}

\begin{abstract} 
 We consider the harmonically weighted Dirichlet spaces $D(\mu)$ induced by Borel measures which are mutually absolutely continuous with respect to the  Lebesgue measure $m$ on the unit circle $\mathbb T$.  Let $H^2$ and $D$ denote the Hardy space and the Dirichlet space on the unit disc $\mathbb D$, respectively. For any $f\in H^2$, let $m_f$ denote the measure defined by $dm_f(\zeta)=|f(\zeta)|^2 dm(\zeta)$. Our first result provides a characterization of functions in $D(m_f)$, when $f$ satisfies the condition $|f'(z)|^2=O(\frac{1}{(1-|z|)^{1-\epsilon}})$ for some $\epsilon>0$. We then study the multiplier algebras of these spaces with particular emphasis on inner multipliers. Specifically, when $f'$ is bounded, we obtain an explicit description of all singular inner functions in the multiplier algebra of $D(m_f)$ in terms of the radial zero set of $f$. As an application, we prove that if $f',g'$ are bounded and the radial zero sets of $f$ and $g$ are different, then the operators $(M_z, D(m_f))$ and $(M_z, D(m_g))$ are not similar. This generalizes a result of Richter showing that the operators $(M_z, D(m_{z-1}))$ and $(M_z, D)$ are not similar. Then, we classify all invariant subspaces $\mathcal M$ of $(M_z, D(\mu))$ for which  $M_z|_{\mathcal M}$ is similar to $(M_z, D(\mu))$, where  $\mu$  is any measure mutually absolutely continuous with respect to $m$. Finally, we study a special case of 
the problem of finding all functions $g\in D$ for which $gD(m_g)$ is a closed invariant subspace of $(M_z, D)$. A key ingredient in many of our results is the Richter-Sundberg formula for the local Dirichlet integral.
\end{abstract}

\maketitle

\section{Introduction}
Let $\mathbb D$ denote the open unit disc $\{z \in \mathbb C : |z| < 1\}$  and $\mathbb T$ denote the unit circle $\{z \in \mathbb C : |z| = 1\}$. Let $\mathcal{O}(\mathbb D)$ denote the space of holomorphic functions on $\mathbb D.$  Let $dA$ denote the \textit{normalized  Lebesgue area measure} on $\mathbb{D}.$ Recall that the \textit{Hardy space} $H^2$  and the \textit{Dirichlet space} $D$ are defined  by 
\begin{align*}
   &H^2=\Big\{f\in \mathcal{O}(\mathbb D): \sup_{0\leqslant r<1}\int_{0}^{2\pi}|f(re^{i\theta})|^2\frac{d\theta}{2\pi} <\infty\Big\},\\
   ~ &D=\Big\{f\in \mathcal{O}(\mathbb D): D(f):=\int_{\mathbb D}|f'(z)|^2 dA(z)<\infty\Big\},   
\end{align*}
with the norms $\|f\|_{H^2}^2:=\sup_{0\leqslant r<1}\int_{0}^{2\pi} |f(re^{i\theta})|^2 \frac{d\theta}{2\pi}$, and $\|f\|_{D}^2:=\|f\|_{H^2}^2+D(f)$, respectively.

        In \cite{richter1991representation}, Richter introduced and studied the harmonically weighted Dirichlet-type spaces  in order to classify cyclic analytic $2$-isometries. Let $M_+(\mathbb T)$ denote  the set of finite positive
Borel measures on $\mathbb T.$  For $\mu\in M_+(\mathbb T),$ let $\varphi_{\mu}(z)$ denote the positive harmonic function on $\mathbb D$ defined by $$\varphi_{\mu}(z):=\int_{\mathbb T}P(z,\eta) d\mu(\eta),$$ where $P(z,\eta)=\frac{1 - |z|^2}{|\eta - z|^2},~ z\in\mathbb D$, $\eta\in \mathbb T$, is the Poisson kernel.  For any $\mu\in M_+(\mathbb T)$ and $f\in\mathcal O(\mathbb D)$, define
$$D_{\mu}(f)=\int_{\mathbb{D}} |f'(z)|^2 \varphi_\mu(z) \, dA(z).$$
	If $\mu\neq 0$, the \textit{Dirichlet-type space} $D(\mu)$ is defined by 
		\begin{equation*}
			D(\mu)=\{f\in \mathcal{O}(\mathbb D):D_{\mu}(f) < \infty\}.
		\end{equation*}
        In this case, it turns out that $D(\mu)\subseteq H^2$, and therefore we  define a norm $\|\cdot\|_{\mu}$ on $D(\mu)$ by
        $$\|f\|_{\mu}^2=\|f\|_{H^2}^2+D_{\mu}(f).$$
		If $\mu = 0$, then $D(\mu)$ is defined to be the space $H^2$. If $\mu=m$, the normalized Lebesgue measure on $\mathbb{T}$, $D(\mu)$ becomes  the Dirichlet space $D.$ 
For any $\mu\in M_+(\mathbb T)$, $D(\mu)$ is a reproducing kernel Hilbert space of holomorphic functions on $\mathbb D$, and the polynomials are dense in $D(\mu)$. 
These $D(\mu)$ spaces have been studied extensively in the literature by several authors for general measures $\mu\in M_+(\mathbb T)$ (see, for example, \cite{richter1991representation, richter1991formula, richter1992multipliers, Shimorin, chartrand, el2014primer, BaoPou, fallah_JFA, Elfallah} and the references therein). Particular attention has been given to the case when $\mu$ is finitely supported (see \cite{Sarason},\cite{costara}). In this paper, we focus on the case when $\mu$ is mutually absolutely continuous with respect to the normalized Lebesgue measure $m$ on $\mathbb T$. 
For any $f\in H^2$, let $m_f$ denotes the measure defined by $$dm_f(\zeta):=|f(\zeta)|^2 dm(\zeta),~\zeta\in \mathbb T,$$ where 
$f(\zeta)$ is the \textit{radial limit} of $f$ at $\zeta$ defined by $f(\zeta):=\lim_{r\to 1^-}f(r\zeta)$, provided the limit exists.

Let $H_1$ and $H_2$ be  two reproducing kernel Hilbert spaces of analytic functions on $\mathbb D$. A function $\varphi \in \mathcal O(\mathbb D)$ is said to be a \emph{multiplier} from $H_1$ to $H_2$ if $\varphi f\in H_2$ for all $f\in H_1$. Let $\mbox{Mult}(H_1, H_2)$ denote the set of multipliers from $H_1$ to $H_2.$ If $H_1=H_2=H$, then we use the notation $\mbox{Mult}(H)$ instead of $\mbox{Mult}(H_1, H_1)$. 
In general, it is difficult to determine the multiplier algebras of functional Hilbert spaces. Nevertheless, these algebras have been completely characterized for several classical function spaces. For instance, $\mbox{Mult}(H^2)=H^\infty,$ the space of  bounded holomorphic functions on $\mathbb D$. A complete description of $\mbox{Mult}(D)$  was given by D. Stegenga \cite{StegengamultD} in terms of Carleson measures. In \cite{chartrand}, Chartrand characterized  the multiplier algebras of $D(\mu)$ spaces in terms of $\mu$-Carleson measures (see \cite[Theorem 2.6]{chartrand}). For the case where $\mu$ is a finitely supported measure, it is known that  $\mbox{Mult}(D(\mu))=D(\mu)\cap H^\infty.$ For a general $\mu\in M_+(\mathbb T)$, we have $\mbox{Mult}(D(\mu))\subseteq  H^\infty.$

For any $\epsilon>0$, let $\mathcal{A}_\epsilon$ denote the set of all functions $f\in\mathcal{O}(\mathbb D)$ such that
$$\sup_{z\in \mathbb D}|f'(z)|^2(1-|z|^2)^{1-\epsilon}$$ 
is finite. It is known that $\mathcal{A}_\epsilon$ is contained in the \textit{disc algebra} $A(\mathbb D)$(see \cite[Theorem 5.1]{duren1970theory}). Also, if $0<\epsilon< 1$, then $\mathcal A_{\epsilon}$ is the \textit{$\alpha$-Bloch space} with $\alpha=\frac{1-\epsilon}{2}$. 
Note that if $\epsilon>1$, then $\mathcal A_{\epsilon}$ contains only constant functions, and thus we will restrict ourselves to the case $0<\epsilon\leqslant 1$ when needed.  Our motivation to study the spaces $D(m_f)$, $f\in\mathcal A_{\epsilon}$, comes from the following result of Richter.

\begin{lem}{\rm(\cite[Lemma 7.4]{richter1991representation}\rm)}
Let $\epsilon>0$ and $f\in \mathcal{A}_\epsilon$. Then $fD(m_f)\subseteq D$  and $D\subseteq D(m_f).$
    \label{richterlipschitz}
\end{lem}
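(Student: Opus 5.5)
The natural tool is the Richter--Sundberg local Dirichlet integral formula. For $\zeta\in\mathbb T$ and $h\in H^2$, the local Dirichlet integral is
$$D_\zeta(h)=\int_{\mathbb T}\left|\frac{h(\eta)-h(\zeta)}{\eta-\zeta}\right|^2 dm(\eta),$$
and $D_\mu(h)=\int_{\mathbb T} D_\zeta(h)\,d\mu(\zeta)$. In particular
$$D(h)=\int_{\mathbb T}D_\zeta(h)\,dm(\zeta),\qquad D_{m_f}(h)=\int_{\mathbb T}D_\zeta(h)\,|f(\zeta)|^2\,dm(\zeta).$$
Since $f\in\mathcal A_\epsilon\subseteq A(\mathbb D)$, $f$ is bounded and continuous on $\overline{\mathbb D}$. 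The key extra input is a Hölder estimate: integrating $|f'|$ along radii and using $|f'(z)|\lesssim(1-|z|)^{-(1-\epsilon)/2}$ gives, by the Hardy--Littlewood argument, $|f(\eta)-f(\zeta)|\leqslant C|\eta-\zeta|^{\beta}$ with $\beta=(1+\epsilon)/2>1/2$. Consequently
$$\sup_{\zeta\in\mathbb T}D_\zeta(f)=\sup_{\zeta}\int_{\mathbb T}\frac{|f(\eta)-f(\zeta)|^2}{|\eta-\zeta|^2}\,dm(\eta)\leqslant C\sup_\zeta\int_{\mathbb T}|\eta-\zeta|^{2\beta-2}\,dm(\eta)<\infty,$$
because $2\beta-2=\epsilon-1>-1$. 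Write $K:=\sup_\zeta D_\zeta(f)$.

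For the inclusion $D\subseteq D(m_f)$: since $|f|\leqslant\|f\|_\infty$ on $\mathbb T$,
$$D_{m_f}(h)=\int D_\zeta(h)\,|f(\zeta)|^2\,dm(\zeta)\leqslant\|f\|_\infty^2\,D(h).$$
So this direction is immediate once boundedness of $f$ is known.

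For $fD(m_f)\subseteq D$, take $h\in D(m_f)\subseteq H^2$ and split
$$(fh)(\eta)-(fh)(\zeta)=f(\zeta)\bigl(h(\eta)-h(\zeta)\bigr)+h(\eta)\bigl(f(\eta)-f(\zeta)\bigr).$$
This gives
$$D_\zeta(fh)\leqslant 2|f(\zeta)|^2D_\zeta(h)+2\int_{\mathbb T}|h(\eta)|^2\frac{|f(\eta)-f(\zeta)|^2}{|\eta-\zeta|^2}\,dm(\eta).$$
Integrate in $\zeta$ against $dm$. The first term yields $2D_{m_f}(h)$. For the second, use Fubini: it equals
$$2\int_{\mathbb T}|h(\eta)|^2\,D_\eta(f)\,dm(\eta)\leqslant 2K\|h\|_{H^2}^2,$$
using the symmetry of the kernel $|f(\eta)-f(\zeta)|^2/|\eta-\zeta|^2$ in $(\eta,\zeta)$. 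Hence $D(fh)\leqslant 2D_{m_f}(h)+2K\|h\|^2_{H^2}<\infty$, and $fh\in D$. One should note that the formula $D(g)=\int D_\zeta(g)\,dm$ holds for any $g\in H^2$, with both sides possibly infinite. This is because Richter--Sundberg gives $D_\zeta(g)=D_{\delta_\zeta}(g)$ for all $g\in H^2$, and Fubini then applies to the Poisson integral.

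The main obstacle is the uniform bound on $D_\zeta(f)$, that is, establishing the Lipschitz-type estimate of order $\beta>1/2$ from the growth condition on $f'$. This is the classical Hardy--Littlewood theorem (cf.\ \cite[Theorem 5.1]{duren1970theory}), which I would invoke. If one prefers a direct route, estimate $|f(\eta)-f(\zeta)|$ by integrating $f'$ radially from $\zeta$ and from $\eta$ down to radius $1-|\eta-\zeta|$, then along the connecting arc. Each radial piece contributes $\int_{1-\delta}^1(1-r)^{(\epsilon-1)/2}\,dr\sim\delta^{\beta}$, and the arc contributes $\delta\cdot\delta^{(\epsilon-1)/2}=\delta^{\beta}$.
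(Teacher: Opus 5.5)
The paper does not prove this lemma; it is imported from Richter \cite[Lemma 7.4]{richter1991representation}, so there is no internal argument to match. Your proof is correct as it stands. Two small precisions. The splitting of $(fh)(\eta)-(fh)(\zeta)$ and the boundary formula for $D_\zeta(h)$ are valid only where $h(\zeta)$ exists. That is $m$-almost every $\zeta$, which is all your integration in $\zeta$ needs. Also, for $\epsilon>1$ the function $f$ is constant, so nothing is lost by taking $\beta\in(\frac12,1]$.

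It is worth contrasting your boundary computation with the disc-side argument the paper uses for the reverse inclusion in Theorem~\ref{lemma4.4}. Run in mirror image, that argument proves this lemma just as quickly:
\begin{itemize}
\item Write $|(fh)'|^2\leqslant 2|f|^2|h'|^2+2|f'|^2|h|^2$.
\item Use $|f(z)|^2\leqslant\varphi_{m_f}(z)$. This is Cauchy--Schwarz for the Poisson integral, or the identity at the start of the proof of Lemma~\ref{Wiener algebra}.
\item Estimate $\int_{\mathbb D}|f'|^2|h|^2\,dA\lesssim\int_{\mathbb D}|h|^2(1-|z|^2)^{\epsilon-1}\,dA\lesssim\|h\|_{H^2}^2$.
\end{itemize}
This gives $D(fh)\leqslant 2D_{m_f}(h)+C\|h\|_{H^2}^2$ directly from the defining growth bound on $f'$. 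It needs neither the local Douglas formula nor the H\"older estimate of Lemma~\ref{RC_Lip_equiv}.

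Your route is natural in the Richter--Sundberg framework. It singles out $\sup_{\zeta\in\mathbb T}D_\zeta(f)<\infty$ (with $f\in H^\infty$) as the relevant quantity. That hypothesis is, however, formally stronger than what the disc argument needs. Since $|h|^2\leqslant\varphi_{m_h}$ on $\mathbb D$, one has
$\int_{\mathbb D}|f'|^2|h|^2\,dA\leqslant D_{m_h}(f)=\int_{\mathbb T}D_\eta(f)\,|h(\eta)|^2\,dm(\eta)$.
So uniformly bounded local Dirichlet integrals of $f$ already force $|f'|^2\,dA$ to be a Carleson measure for $H^2$, and that Carleson property is all the disc argument uses.
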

Since $D(m_f)\subseteq H^2,$ the preceding lemma  immediately implies that if $f\in \mathcal{A}_\epsilon$, then  
\begin{equation} \label{Wiener_in_RClass}
    D(m_f)\subseteq \{g\in H^2: fg\in D\}.
\end{equation}
In our first result, we prove that the reverse inclusion  of \eqref{Wiener_in_RClass} also holds, thereby  providing a characterization of functions in $D(m_f)$ when $f\in \mathcal A_{\epsilon}$.

 \begin{thm}\label{lemma4.4}
    Let $\epsilon>0$ and $f\in \mathcal{A}_\epsilon$.  Then the Dirichlet-type space $$D(m_f)=\{g\in H^2: fg\in D\}.$$  Moreover, in this case, there exist  $C_1, C_2>0$ such that the following  holds:
\begin{equation}\label{eqnequivalentnorm}
  C_1(\|g\|_{H^2}^2+D(fg))\leqslant \|g\|_{m_f}^2\leqslant C_2  (\|g\|_{H^2}^2+D(fg)),~g\in D(m_f).
\end{equation}    
\end{thm}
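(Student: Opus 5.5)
The plan is to use the Richter--Sundberg local Dirichlet integral formula to compare $D_{m_f}(g)$ with $D(fg)$. For $g\in H^2$ and $\zeta\in\mathbb T$, write
\[
D_\zeta(g)=\int_{\mathbb T}\Big|\frac{g(\eta)-g(\zeta)}{\eta-\zeta}\Big|^2 dm(\eta)
\]
for the local Dirichlet integral. Then $D_\mu(g)=\int_{\mathbb T} D_\zeta(g)\,d\mu(\zeta)$, and in particular
\[
D_{m_f}(g)=\int_{\mathbb T} D_\zeta(g)\,|f(\zeta)|^2\,dm(\zeta),\qquad D(fg)=\int_{\mathbb T} D_\zeta(fg)\,dm(\zeta).
\]
Since $f\in\mathcal A_\epsilon\subseteq A(\mathbb D)$, $f$ is continuous on the closed disc and bounded. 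The inclusion $D(m_f)\subseteq\{g\in H^2: fg\in D\}$ is Lemma \ref{richterlipschitz}, but we also need the norm estimate. So I would prove the pointwise-in-$\zeta$ comparison directly and get both inclusions and \eqref{eqnequivalentnorm} at once.

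\textbf{Key decomposition.} For $\eta,\zeta\in\mathbb T$,
\[
f(\eta)g(\eta)-f(\zeta)g(\zeta)=f(\zeta)\big(g(\eta)-g(\zeta)\big)+g(\eta)\big(f(\eta)-f(\zeta)\big).
\]
Dividing by $\eta-\zeta$ and integrating $|\cdot|^2$ in $\eta$ gives
\[
\Big|\,D_\zeta(fg)^{1/2}-|f(\zeta)|\,D_\zeta(g)^{1/2}\Big|\le \Big(\int_{\mathbb T}|g(\eta)|^2\,\frac{|f(\eta)-f(\zeta)|^2}{|\eta-\zeta|^2}\,dm(\eta)\Big)^{1/2}=:E(\zeta)^{1/2}.
\]
Squaring and integrating in $\zeta$ then gives both
\[
D_{m_f}(g)\le 2D(fg)+2\int_{\mathbb T} E\,dm \quad\text{and}\quad D(fg)\le 2D_{m_f}(g)+2\int_{\mathbb T} E\,dm.
\]
So everything reduces to the error bound $\int_{\mathbb T} E\,dm\le C\|g\|_{H^2}^2$. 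With that bound in hand, $g\in H^2$ has $fg\in D$ if and only if $g\in D(m_f)$, and the constants $C_1,C_2$ follow directly.

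\textbf{Error term (main step).} By Fubini,
\[
\int_{\mathbb T} E\,dm=\int_{\mathbb T}|g(\eta)|^2\,D_\eta(f)\,dm(\eta),
\]
so it suffices to show $\sup_{\eta\in\mathbb T}D_\eta(f)<\infty$. By \cite[Theorem 5.1]{duren1970theory}, the growth condition $|f'(z)|=O\big((1-|z|)^{-(1-\epsilon)/2}\big)$ makes $f$ Hölder continuous on $\overline{\mathbb D}$ of order $\alpha=(1+\epsilon)/2>1/2$. Hence $|f(\eta)-f(\zeta)|^2/|\eta-\zeta|^2\le C|\eta-\zeta|^{2\alpha-2}=C|\eta-\zeta|^{\epsilon-1}$, which is integrable on $\mathbb T$ uniformly in $\zeta$ because $\epsilon>0$. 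This gives $\sup_\eta D_\eta(f)<\infty$.

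This is the only place the hypothesis $f\in\mathcal A_\epsilon$ with $\epsilon>0$ enters. It is also the step I expect needs care: I have to check that Hölder order strictly above $1/2$ is exactly what makes the local Dirichlet integral of $f$ uniformly bounded, and to handle the endpoint $\epsilon\ge1$ (where $\mathcal A_\epsilon$ is Lipschitz or constant) separately.
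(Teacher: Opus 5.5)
Your argument is correct, and it takes a genuinely different route from the paper.

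\textbf{The paper's route.} The paper works with area integrals. It cites Lemma~\ref{richterlipschitz} for the inclusion $D(m_f)\subseteq\{g\in H^2: fg\in D\}$. For the reverse inclusion, it uses Lemma~\ref{Wiener algebra} ($\varphi_{m_f}(z)-|f(z)|^2\lesssim 1-|z|^2$) to replace the weight by $|f|^2+(1-|z|^2)$ and writes $fg'=(fg)'-f'g$. It then bounds $\int_{\mathbb D}|f'g|^2\,dA$ using $|f'|^2\lesssim(1-|z|^2)^{\epsilon-1}$ and the embedding of $H^2$ into a weighted Bergman space, and bounds $\int_{\mathbb D}|g'|^2(1-|z|^2)\,dA$ by the Littlewood--Paley formula. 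The norm equivalence is obtained abstractly: $\|g\|_{H^2}^2+D(fg)$ is another reproducing kernel Hilbert space norm on the same set, and \cite[Theorem 5.1]{paulsen2016introduction} applies. That is a closed graph argument, so it needs completeness of this second norm.

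\textbf{Your route.} You work on $\mathbb T$ with the local Douglas formula. The product-rule splitting and Minkowski in $L^2(dm(\eta))$ give, for $m$-a.e.\ $\zeta$,
\[
|f(\zeta)|^2D_\zeta(g)\leqslant 2D_\zeta(fg)+2E(\zeta), \qquad D_\zeta(fg)\leqslant 2|f(\zeta)|^2D_\zeta(g)+2E(\zeta).
\]
Both hold in $[0,\infty]$, so no finiteness has to be assumed in advance. Tonelli then turns $\int_{\mathbb T}E\,dm$ into $\int_{\mathbb T}|g(\eta)|^2D_\eta(f)\,dm(\eta)$.

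\textbf{What each buys.} Your approach gives both inclusions and explicit constants in \eqref{eqnequivalentnorm} at once, without Richter's lemma or the closed graph theorem. It also shows that the hypothesis on $f$ enters only through $f\in H^\infty$ and $\sup_{\eta\in\mathbb T}D_\eta(f)<\infty$. Your H\"older computation is the boundary counterpart of the bound on $\int_{\mathbb T}|z-\zeta|^{2\beta-2}\,dm(\zeta)$ that drives Lemma~\ref{Wiener algebra}. The paper's interior route has the advantage that Lemma~\ref{Wiener algebra} is reused later, in Lemma~\ref{kelley rep ker rel mult} and Proposition~\ref{innerfunclassification}.

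On your closing concern: you only need that H\"older order $>\tfrac12$ is sufficient, and your computation already shows this. For $\epsilon\geqslant 1$, note that $\mathcal A_\epsilon\subseteq\mathcal A_1$, and there the same estimate holds with exponent $0$.
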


The assumption $f\in\mathcal A_{\epsilon}$, $\epsilon>0$, in Theorem \ref{lemma4.4} cannot, in general, be omitted. Indeed, let $f\in D\cap H^\infty$ be such that $f\notin\mbox{Mult}(D)$. By \cite[Theorem 6.2]{richter1991representation}, we have $D\subseteq D(m_f).$ However, $D(m_f)\not\subseteq \{g\in H^2:fg\in D\}.$
In fact, if the latter inclusion were to hold, then for every $g\in D(m_f)$ we would have $fg\in D$. Since $D\subseteq D(m_f)$, it would imply that $fD\subseteq D,$ and hence $f\in \mbox{Mult}(D)$, contradicting our choice of $f$. Thus, for such a function $f$, the conclusion of Theorem \ref{eqnequivalentnorm} fails. The existence of a function $f\in (D\cap H^\infty)\setminus\mbox{Mult}(D)$ is well known; see, for example, \cite[Theorem 5.1.6]{el2014primer}.

We investigate explicit descriptions of inner multipliers of  $D(m_f)$ when $f\in \mathcal A_{\epsilon}$. Recall that a function $\varphi\in H^\infty$ is called \emph{inner} if $|\varphi(\zeta)|=1$ for almost every $\zeta$ on $\mathbb T$. Every inner function $\varphi$ can be factorized into $\varphi=BS_{\sigma},$ where 
\begin{equation}\label{eqnBlaschke}
    B(z)=\gamma \prod_{n}\frac{|z_n|}{z_n}\frac{z_n-z}{1-\overline{z_n}z},~~ z\in\mathbb  D,
\end{equation} is the \textit{Blaschke product} formed with the zeros $\{z_n\}$ (finite or infinite) of $\varphi$ in $\mathbb{D},$ counting multiplicity, such that $\sum(1-|z_n|)<\infty$ and $\gamma$ is a unimodular constant, and 
\begin{equation}\label{eqnSingular}
S_{\sigma}(z)=\exp\left(-\int_\mathbb{T}\frac{e^{it}+z}{e^{it}-z}d\sigma(e^{it})\right), ~z\in\mathbb D,
\end{equation}is the \textit{singular inner function} corresponding to some $\sigma \in M_{+}(\mathbb T)$ which is singular with respect to $m.$ 
(If some $z_n=0$, we use the standard convention that $\frac{|z_n|}{z_n}=1$ for these terms.)

 It is well-known that finite Blaschke products are the only inner functions in $D$ and they also belong to
$\mbox{Mult}(D)$ (see \cite[Corollary 7.6.10 and Theorem 8.1.4]{el2014primer}) . On the contrary, this is not the case for general $D(\mu)$ spaces, see for example \cite[Theorem 3.1]{Elfallah}.  In the following theorem, we provide an explicit description of singular inner multipliers of $D(m_f)$ when $f\in \mathcal A_1.$ For any $f\in H^2$, let $Z_{\mathbb T}(f)$ denote the \emph{radial zero set of $f$} defined by  $$Z_\mathbb T(f)=\{ \zeta\in\mathbb T: \lim_{r\to 1^-} f(r\zeta)=0\}.$$ 

\begin{thm}\label{singmultiff}
Let $f\in \mathcal A_1$. 
Let $\sigma\in M_+(\mathbb T)$ be a measure singular with respect to $m$.  Then the following statements are equivalent:
    \begin{enumerate}
        \item[\rm(i)] $S_\sigma\in \mbox{Mult}(D(m_f)),$
        \item[\rm(ii)] $S_\sigma\in D(m_f),$
        \item[\rm(iii)] \mbox{supp}$(\sigma)\subseteq Z_{\mathbb T}(f).$ 
    \end{enumerate}
    \end{thm}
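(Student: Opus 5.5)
We prove (i)$\Rightarrow$(ii)$\Rightarrow$(iii)$\Rightarrow$(i). The implication (i)$\Rightarrow$(ii) is immediate: the constant function $1$ lies in $D(m_f)$, so $S_\sigma=S_\sigma\cdot 1\in D(m_f)$. For the other two implications the main tool is the Richter--Sundberg formula. For $\mu$ absolutely continuous, $D_\mu(g)=\int_{\mathbb T} D_\zeta(g)\,d\mu(\zeta)$. For an inner--outer type factorization $g=S_\sigma h$, the local Dirichlet integral splits as
\begin{equation*}
D_\zeta(S_\sigma h)=D_\zeta(h)+\int_{\mathbb T}\frac{2\,|h(\zeta)|^2}{|\zeta-\eta|^2}\,d\sigma(\eta),
\end{equation*}
where $|h(\zeta)|$ denotes the radial limit; in particular $D_\zeta(S_\sigma)=\int 2|\zeta-\eta|^{-2}d\sigma(\eta)$. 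Hence
\begin{equation*}
D_{m_f}(S_\sigma h)=D_{m_f}(h)+2\int_{\mathbb T}\int_{\mathbb T}\frac{|f(\zeta)|^2|h(\zeta)|^2}{|\zeta-\eta|^2}\,dm(\zeta)\,d\sigma(\eta).
\end{equation*}

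\textbf{(ii)$\Rightarrow$(iii).} Take $h=1$. Then $S_\sigma\in D(m_f)$ if and only if
\begin{equation*}
\int_{\mathbb T} F(\eta)\,d\sigma(\eta)<\infty,\qquad F(\eta):=\int_{\mathbb T}\frac{|f(\zeta)|^2}{|\zeta-\eta|^2}\,dm(\zeta).
\end{equation*}
Since $f\in\mathcal A_1$, $f'$ is bounded, so $f$ is Lipschitz on $\overline{\mathbb D}$; in particular $f$ extends continuously and $Z_{\mathbb T}(f)$ is the closed set $\{f=0\}\cap\mathbb T$. If $\eta_0\in\operatorname{supp}\sigma$ with $f(\eta_0)\neq 0$, then $|f|\geq c>0$ on a neighbourhood $U$ of $\eta_0$. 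For every $\eta\in U$ the integrand $|f(\zeta)|^2|\zeta-\eta|^{-2}$ is at least $c^2|\zeta-\eta|^{-2}$ near $\zeta=\eta$, which is not integrable, so $F\equiv\infty$ on $U$. As $\sigma(U)>0$, this contradicts $\int F\,d\sigma<\infty$. Hence $\operatorname{supp}\sigma\subseteq Z_{\mathbb T}(f)$.

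\textbf{(iii)$\Rightarrow$(i).} Let $g\in D(m_f)$. Write $g=S_\tau B\,O$ with $O$ outer; we may absorb the Blaschke part into the Richter--Sundberg framework as well. The goal is to show $S_\sigma g\in D(m_f)$ with
\begin{equation*}
D_{m_f}(S_\sigma g)\leq D_{m_f}(g)+2\int\!\!\int\frac{|f(\zeta)g(\zeta)|^2}{|\zeta-\eta|^2}\,dm(\zeta)\,d\sigma(\eta).
\end{equation*}
For $\eta\in\operatorname{supp}\sigma$ we have $f(\eta)=0$, so the Lipschitz bound gives $|f(\zeta)|\leq L|\zeta-\eta|$. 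The inner integral is therefore at most
\begin{equation*}
L^2\int_{\mathbb T}|g|^2\,dm=L^2\|g\|_{H^2}^2,
\end{equation*}
and so the extra term is bounded by $2L^2\sigma(\mathbb T)\|g\|_{H^2}^2$. Multiplication by $S_\sigma$ is then a bounded map on $D(m_f)$; boundedness is automatic by the closed graph theorem in any case.

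\textbf{Main obstacle.} The delicate step is justifying the splitting formula for $D_\zeta(S_\sigma g)$ when $g$ itself carries an inner factor. Here I would use the general Richter--Sundberg formula for $D_\zeta(Ig)$, with $I$ inner, in the form
\begin{equation*}
D_\zeta(Ig)=D_\zeta(g)+|g(\zeta)|^2 D_\zeta(I),
\end{equation*}
valid whenever the right-hand side makes sense. For $I=S_\sigma$, the quantity $D_\zeta(S_\sigma)$ is computed directly from the formula above. One should also check that the radial limits of $g$ enter correctly almost everywhere, and, if one prefers to avoid the general formula, first prove the estimate for polynomials $g$ and then extend by density and Fatou's lemma.
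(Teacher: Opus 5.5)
Your proof is correct. For (iii)$\Rightarrow$(i) it takes a shorter route than the paper.

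\textbf{(iii)$\Rightarrow$(i).} The paper goes through Proposition \ref{sing mult iff condn for RC}. That step uses:
\begin{itemize}
\item the description $D(m_f)=\{g\in H^2: fg\in D\}$ with equivalent norms (Theorem \ref{lemma4.4});
\item Lemma \ref{radiallimofLip}, to get $fg=0$ $\sigma$-a.e.;
\item Lemma \ref{prop 5.4}(i) in the classical Dirichlet space, to write $D(S_\sigma fg)=D(fg)+2D_\sigma(fg)$.
\end{itemize}
Only then does it bound $D_\sigma(fg)\lesssim\sigma(\mathbb T)\|g\|_{H^2}^2$ by the Lipschitz estimate.

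You instead integrate the Richter--Sundberg formula directly against $m_f$. This gives $\|S_\sigma g\|_{m_f}^2\le(1+2L^2\sigma(\mathbb T))\|g\|_{m_f}^2$ with an explicit constant, and uses neither Theorem \ref{lemma4.4} nor Lemma \ref{radiallimofLip}. In effect you use the bound $|f(\zeta)|^2D_\zeta(S_\sigma)\le 2L^2\sigma(\mathbb T)$ for $m$-a.e.\ $\zeta$. This is the boundary form of the route the paper also contains through Propositions \ref{singHinftyequiv} and \ref{sufficient}. Your version avoids the angular-derivative and Julia--Carath\'eodory step of Proposition \ref{sufficient}, because it reads $D_\zeta(S_\sigma)$ straight off Theorem \ref{carlrep}(iii).

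What the paper's detour buys is the two-sided criterion of Proposition \ref{sing mult iff condn for RC}, valid for every $f\in\mathcal A_\epsilon$. That criterion drives the negative result, Theorem \ref{singnotinmult}. Your estimate needs $|f(\zeta)|\le L|\zeta-\eta|$, which is not available when $\epsilon<1$. Your identity could still recover that criterion: with the local Douglas formula and Lemma \ref{radiallimofLip}, its extra term equals $2D_\sigma(fg)$.

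\textbf{(ii)$\Rightarrow$(iii).} You show $F\equiv\infty$ near any support point where $f\neq 0$, using continuity of $f$. This replaces the paper's appeal to Lemma \ref{local Douglas}(i) and Remark \ref{suppsigmainzeroes}, and both arguments are fine.

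\textbf{Your ``main obstacle''.} It is not actually an obstacle. Theorem \ref{carlrep}(i) holds for every $g\in H^2$ and every $\zeta\in\mathbb T$, whatever the inner factor of $g$. Moreover:
\begin{itemize}
\item $g(\zeta)$ exists for $m$-a.e.\ $\zeta$;
\item the diagonal of $\mathbb T\times\mathbb T$ has $m\times\sigma$-measure zero.
\end{itemize}
So Tonelli gives your integrated identity for all $g\in H^2$ directly. The factorization $g=S_\tau BO$ and the density/Fatou fallback are unnecessary.
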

    We further show that, under the hypothesis of Theorem \ref{singmultiff}, the conditions (i), (ii), and (iii) are equivalent to  the condition $f^2S_{\sigma}'\in H^\infty$, see Proposition \ref{singHinftyequiv}. Moreover, the conclusion of the preceding theorem may fail in the absence of the assumption $f\in \mathcal{A}_1$. 
        More precisely, we show that, for $f\in \mathcal A_{\epsilon}$, $0<\epsilon<1,$ the membership of a singular inner function in $D(m_f)$ does not, in general, imply that it is a multiplier of $D(m_f)$, see Theorem \ref{singnotinmult}.

The proofs of Theorems \ref{singmultiff} and \ref{singnotinmult} are obtained as a consequence of a general result  which states that, if $f\in \mathcal A_{\epsilon}$, then $S_{\sigma}\in \mbox{Mult}(D(m_f))$ if and only if $S_{\sigma}\in D(m_f)$ and $f\in \mbox{Mult}(D(m_f), D(\sigma))$, see Proposition \ref{sing mult iff condn for RC}.

    We also obtain a sufficient condition for an infinite Blaschke product $B$ to be a multiplier of $D(m_f)$ when $f\in \mathcal A_1$. More specifically, we prove that if the zeros $\{z_n\}$ of  $B$ lie in a \textit{Stolz angle with vertex at $\zeta\in\mathbb T$}, then $B\in \mbox{Mult}(D(m_f))$, provided $f\in \mathcal A_1$ and $f(\zeta)=0$, see Proposition \ref{|B'(z)| bound}.
As an immediate application, we show that if $f,g\in \mathcal A_1$ and $Z_{\mathbb T}(f)\neq Z_{\mathbb T}(g)$, then 
$\mbox{Mult}(D({m_f}))\cap \mathcal S\neq \mbox{Mult}(D({m_g}))\cap \mathcal S$ and $\mbox{Mult}(D({m_f}))\cap \mathcal B\neq \mbox{Mult}(D({m_g}))\cap \mathcal B$ as sets, where $\mathcal S$ and $\mathcal B$ denote the set of all singular inner functions and all Blaschke products on $\mathbb D$, respectively.

    For $\mu\in M_+(\mathbb T)$, let $(M_z, D(\mu))$ denote the operator of multiplication by $z$ on $D(\mu).$ It turns out that $(M_z,D(\mu))$  is a cyclic analytic 2-isometry. In \cite[Theorem 6.7]{richter1991representation}, Richter 
    proved that if \((M_z, D(\mu))\) and \((M_z, D(\nu))\) are quasisimilar, then \(\mu\) and \(\nu\) are mutually absolutely continuous. 
   It was also shown that the converse of this is not true, in general (see \cite[p. 344]{richter1991representation}). But, in case $f$ is a rational function with poles off $\overline{\mathbb D}$, then $(M_z, D(m_f))$ is always quasisimilar to $(M_z, D),$ \cite[Theorem 7.5]{richter1991representation}.
  In view of this, it is therefore natural to ask if the operators $(M_z, D(m_f))$ and $(M_z, D)$ are similar or not, where $f$ is a rational function with poles off $\overline{\mathbb D}$. In the particular case, when $f(z)=z-1$, Richter proved that $(M_z,D(m_{z-1}))$ is not  similar to $(M_z, D)$, see \cite[p. 349]{richter1991representation}. We now briefly revisit his argument. Firstly, he established the following result regarding similarity between $(M_z, D)$ and its restriction to any closed invariant subspace $\mathcal M$.
 \begin{thm}{\rm(cf. \cite[Proposition 7.3]{richter1991representation}\rm)}\label{similarity to invariant subspace}
Let $\mathcal M$ be a nonzero closed invariant subspace of $(M_z, D)$. Then $M_z|_{\mathcal M}$ is similar to $(M_z, D)$ if and only if $\mathcal M=BD$ for some finite Blaschke product $B.$
\end{thm}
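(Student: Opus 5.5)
The plan is to prove the two directions separately. The ``if'' direction should be routine. The ``only if'' direction is where the structure of $D$ is needed: the fact that $(M_z,D)$ is a cyclic analytic $2$-isometry, and the fact that the only inner functions in $D$ are finite Blaschke products.

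\emph{Sufficiency.} Let $B$ be a finite Blaschke product. Since $B\in\mbox{Mult}(D)$, the map $T:D\to BD$, $Tg=Bg$, is bounded. It is injective, and $BD$ is a closed subspace of $D$. To see closedness, I will bound the norm from below by $\|Bg\|_D\geqslant c\|g\|_D$. This follows because $B$ is a unimodular, smooth function on $\mathbb T$ and $D(Bg)=D(g)+\int_{\mathbb T}|g|^2\sum_j P(\cdot,a_j)\,dm$, which is the local Dirichlet integral identity $D(Bg)\geqslant D(g)$. Since $BD$ is closed, the bounded inverse theorem applies and $T$ is an invertible map of $D$ onto $BD$. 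Finally, $TM_z=M_z|_{BD}T$, so $M_z|_{BD}$ is similar to $(M_z,D)$.

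\emph{Necessity.} Suppose $T:D\to\mathcal M$ is invertible and $TM_z=M_z|_{\mathcal M}T$.

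1. The operator $M_z|_{\mathcal M}$ is then cyclic with cyclic vector $\varphi:=T1$. Moreover, $Tp=p\varphi$ for every polynomial $p$, so $\|p\varphi\|_D\asymp\|p\|_D$.

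2. Two consequences follow. First, $\varphi\in\mbox{Mult}(D)$ and $\mathcal M=[\varphi]=\varphi D$. Second, $\dim(\mathcal M\ominus z\mathcal M)=\dim(D\ominus zD)=1$, since similarity preserves the codimension of the range.

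3. By the Richter–Sundberg theory of invariant subspaces of $D$, every such $\mathcal M$ has the form $\mathcal M=\phi D(\mu_\phi)\cap D$, where $\phi$ is the extremal function (a unit vector spanning $\mathcal M\ominus z\mathcal M$), $\phi$ is a multiplier of $D(\mu_\phi)$ into $D$, and $d\mu_\phi=|\phi|^2dm$. Alternatively, one can argue directly with the inner–outer factorization $\varphi=I F$, where $I$ is the inner part and $F$ is outer.

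4. The key step is to show that $I$ is a finite Blaschke product. The lower bound $\|p\varphi\|_D\geqslant c\|p\|_D$ passes to all $g\in D$. I will apply the Richter–Sundberg (Carleson) formula for the local Dirichlet integral:
\[
D(Ih)=D(h)+\int_{\mathbb T}|h|^2\,\Big(\sum_j P(\cdot,z_j)+\int P\,d\sigma\cdot 2\Big)dm,
\]
together with $D(\varphi g)$ versus $D(g)$. This should show that $I$ must lie in $D$. The only inner functions in $D$ are finite Blaschke products, so $I=B$ is one.

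5. It then remains to show that the outer part $F$ satisfies $FD=D$ with $F$ and $1/F$ both multipliers. If this holds, then $\mathcal M=BFD=BD$.

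I expect the main obstacle to be step 5, together with the passage from $\varphi D$ to $BD$. The difficulty is showing that $F$ is bounded below and invertible in $\mbox{Mult}(D)$. I would try the following. Since $T$ is onto $\mathcal M$, there is a $g\in D$ with $\varphi g=B$. Then $g=1/F$ belongs to $D$, and the norm equivalence $\|h\|_D\asymp\|Fh\|_D$ on $D$ shows that $1/F$ multiplies $FD=D$ back into $D$. As a preliminary, I would first get $B\in\mathcal M$ by using $\mathcal M=[\varphi]\supseteq[B]$. For this I would invoke the fact that outer functions in $\mbox{Mult}(D)$ are cyclic in $D$, which gives $[\varphi]=B[F]=BD$, so that $\mathcal M=BD$ directly.
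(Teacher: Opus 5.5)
\textbf{There is a genuine gap in Steps 4--5.} Your sufficiency argument is fine. So are Steps 1--2, which are the standard reduction to $T=M_\varphi$, $\varphi\in\mbox{Mult}(D)$, $\mathcal M=\varphi D$.

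The problem is that you never extract from the lower bound $\|\varphi g\|_D\geqslant c\|g\|_D$ the fact that drives the proof: $|\varphi|$ is bounded below on $\mathbb T$.

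Without that fact, Step 4 is unsupported. Carleson's formula gives $D(\varphi)=D(F)+\int_{\mathbb T}|F(\zeta)|^2D_\zeta(I)\,dm(\zeta)$. This controls $D_\zeta(I)$ only against the weight $|F|^2$, so it cannot give $I\in D$ by itself. Indeed, $(z-1)S_{\delta_1}$ is a multiplier of $D$ whose inner factor is not in $D$.

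Step 5 does not repair this, for three reasons.
\begin{itemize}
\item The claim that outer functions in $\mbox{Mult}(D)$ are cyclic in $D$ is false. By Taylor--Williams there are outer functions in $A^\infty\subseteq\mbox{Mult}(D)$ vanishing exactly on the middle-third Cantor set. That set has positive logarithmic capacity, so by Brown--Shields such functions are not cyclic.
\item The equivalence $\|Fh\|_D\asymp\|h\|_D$ is not available to you. From \eqref{mu norm increase} you only get $\|Fh\|_D\leqslant\|\varphi h\|_D$, which is an upper bound, not a lower one.
\item Choosing $g\in D$ with $\varphi g=B$ presupposes $B\in\mathcal M$, which is what you are trying to prove.
\end{itemize}

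\textbf{The missing step.} It is Richter's observation (his Corollary 6.6, used in the paper's proof of Theorem \ref{invsubrestchar}): if $M_\varphi$ is bounded below, then $1/\varphi\in L^\infty(m)$. For $D$ this is elementary.
\begin{itemize}
\item Since $\|z^nh\|_D^2=\|h\|_D^2+n\|h\|_{H^2}^2$, apply the lower bound to $z^np$, divide by $n$, and let $n\to\infty$. This gives $\|\varphi p\|_{H^2}\geqslant c\|p\|_{H^2}$ for all polynomials $p$, hence $|\varphi|\geqslant c$ $m$-a.e.
\item Then $|F|=|\varphi|\geqslant c$ a.e., so $1/F\in H^\infty$ by Smirnov's maximum principle.
\item $F\in\mbox{Mult}(D)$ by \eqref{mu norm increase}.
\item The quotient-rule estimate of Lemma \ref{invertible multiplier} gives $1/F\in\mbox{Mult}(D)$, so $FD=D$.
\item The inner factor now comes for free: $I=\varphi\cdot(1/F)\in\mbox{Mult}(D)\subseteq D$. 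So $I$ is a finite Blaschke product and $\mathcal M=IFD=ID$.
\end{itemize}
In short, the order of your Steps 4 and 5 should be reversed: handle the outer factor first, via the boundary lower bound. No cyclicity theory or Richter--Sundberg structure theorem is then needed.
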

\noindent Moreover, it was also shown that
\begin{equation}\label{example of Richter}
    \mathcal{N}:=(z-1)S_{r\delta_1}D(m_{z-1})  \mbox{~is a closed invariant subspace of~} (M_z,D),
\end{equation}
 for any $r>0$ (see \cite[Example 7.2]{richter1991representation}). Consequently,  $(M_z, D(m_{z-1}))$ is similar to $ M_z|_{{\mathcal N}}$.   If $(M_z, D(m_{z-1}))$ is similar to $(M_z, D),$ then in view of Theorem \ref{similarity to invariant subspace}, similarity of $(M_z, D)$ and $M_z|_{{\mathcal N}}$ implies that $\mathcal{\mathcal N}=BD$ for some finite Blaschke product $B$.  Clearly, this is impossible as all functions in $\mathcal N$ have a singular inner factor $S_{r\delta_1}$.

 As an application of our results on multiplier algebras, we prove the following result on the similarity of the operators $(M_z, D(m_f))$ and $(M_z, D(m_g))$. In particular, it settles the similarity counterpart  of \cite[Theorem 7.5]{richter1991representation}.

 \begin{prop}\label{simwrtzeros}
     Let $f,g\in \mathcal{A}_1$.  If $(M_z, D(m_f))$ is  similar to $(M_z, D(m_g))$, then  $Z_\mathbb{T}(f) =  Z_\mathbb{T}(g)$. Moreover, the following statements are equivalent:
\begin{itemize}
    \item[(i)] $(M_z, D(m_f))$ is  similar to $(M_z, D)$,
    \item[(ii)] $Z_\mathbb{T}(f)=\emptyset$,
    \item[(iii)] $D(m_f)=D$ with equivalence of norms.
\end{itemize}
 \end{prop}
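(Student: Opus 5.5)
The plan is to use the fact that similarity preserves multiplier algebras, and then read off $Z_{\mathbb T}(f)$ from the singular inner multipliers via Theorem \ref{singmultiff}. Suppose $T:D(m_f)\to D(m_g)$ is invertible and bounded with $TM_z=M_zT$. Then $T$ commutes with every polynomial in $M_z$. For $\varphi\in\mbox{Mult}(D(m_f))$, multiplication $M_\varphi$ lies in the WOT-closed algebra generated by $M_z$ on $D(m_f)$, because multipliers are $H^\infty$-functions and Fejér-type polynomial approximants $p_n\to\varphi$ give $M_{p_n}\to M_\varphi$ weakly. This is the standard argument for analytic $2$-isometries and cyclic operators with dense polynomials.

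A cleaner route avoids that approximation. Put $h=T1\in D(m_g)$. Then $Tp=ph$ for every polynomial $p$. The operator $T^{-1}M_\varphi$ applied to $1$ gives $T^{-1}\varphi$, which is meaningful only if $\varphi\in D(m_g)$, so instead I will argue as follows. Commutation with $M_z$ and cyclicity of $1$ give $T=M_h$ with $h\in\mbox{Mult}(D(m_f),D(m_g))$, and $T^{-1}=M_{k}$ with $hk=1$. Then for $\varphi\in \mbox{Mult}(D(m_f))$ we have $M_h M_\varphi M_{k}=M_\varphi$ as a bounded operator on $D(m_g)$. Hence $\mbox{Mult}(D(m_f))=\mbox{Mult}(D(m_g))$.

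Now suppose $\zeta\in Z_{\mathbb T}(f)\setminus Z_{\mathbb T}(g)$. Take $\sigma=\delta_\zeta$. By Theorem \ref{singmultiff}, $S_{\delta_\zeta}\in\mbox{Mult}(D(m_f))$ but $S_{\delta_\zeta}\notin\mbox{Mult}(D(m_g))$, a contradiction. By symmetry, $Z_{\mathbb T}(f)=Z_{\mathbb T}(g)$.

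For the equivalences, (iii)$\Rightarrow$(i) holds because the identity map is then a bounded invertible intertwiner. For (i)$\Rightarrow$(ii), note that $D=D(m_1)$ and $1\in\mathcal A_1$ with $Z_{\mathbb T}(1)=\emptyset$, so the first part gives $Z_{\mathbb T}(f)=\emptyset$.

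The main step is (ii)$\Rightarrow$(iii). Since $f\in\mathcal A_1\subseteq A(\mathbb D)$ is continuous on $\overline{\mathbb D}$ and has no zeros on $\mathbb T$, there exist $c>0$ and $r_0<1$ with $|f|\ge c$ on $\{r_0\le|z|\le 1\}$. Hence $c^2\le |f|^2\le \|f\|_\infty^2$ on $\mathbb T$, so $m_f$ and $m$ are comparable. It follows that $\varphi_{m_f}$ and $\varphi_m\equiv1$ are comparable on $\mathbb D$, since Poisson integrals preserve the inequalities. Thus $D_{m_f}(g)\asymp D(g)$, which gives $D(m_f)=D$ with equivalent norms.

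I expect the main obstacle to be justifying that an intertwiner is a multiplication operator whose inverse is also a multiplication. This uses that point evaluations are continuous and that $M_z^*$-eigenvectors are the kernels: $T^*k_\lambda^{g}=\overline{h(\lambda)}k^f_\lambda$. From this, $h$ has no zeros, and $T^{-1}=M_{1/h}$ follows in the same way.
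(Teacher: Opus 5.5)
Your proof is correct and is essentially the paper's argument: similarity forces $\mbox{Mult}(D(m_f))=\mbox{Mult}(D(m_g))$, Theorem \ref{singmultiff} applied to $S_{\delta_\zeta}$ separates the radial zero sets, (i)$\Rightarrow$(ii) is the case $g=1$, and (iii)$\Rightarrow$(i) uses the identity map. There are two differences. First, you prove inline what the paper cites as Lemma \ref{similaritymult}; your argument with $T=M_h$, $T^{-1}=M_k$, $hk=1$ is complete. The WOT/Fej\'er aside in your first paragraph is unjustified and should be dropped, because uniform $H^\infty$ bounds on Fej\'er means do not by themselves bound their multiplier norms on $D(m_f)$. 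Second, you verify (ii)$\Rightarrow$(iii) directly via $c^2\leqslant\varphi_{m_f}\leqslant\|f\|_\infty^2$ rather than citing Lemma \ref{RichterequalDspaces}.
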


If $H$ is a functional Hilbert space, then  a natural question is when the restriction $M_z|_{\mathcal M}$ is similar to $M_z$ on $H$, where $\mathcal M$ is a closed invariant subspace of $(M_z, H)$. In the Hardy space $H^2$, the Beurling's Theorem implies that  $(M_z, H^2)$ is similar (in fact, unitarily equivalent) to $M_z|_{\mathcal M},$ for any nonzero closed invariant subspace $\mathcal M$ of $(M_z, H^2).$ Theorem  \ref{similarity to invariant subspace} provides an analogous result for $D.$
The following theorem generalizes this result for $D(\mu)$, where $\mu$ is mutually absolutely continuous with respect to $m.$
 \begin{thm}\label{invsubrestchar}
    Let  $\mu\in M_{+}(\mathbb T)$ be mutually absolutely continuous with respect to $m.$ Let $\mathcal M$ be a nonzero closed invariant subspace of $(M_z,D(\mu)).$ Then $M_z|_{\mathcal M}$ is similar to $(M_z,D(\mu))$ if and only if $\mathcal M=\varphi D(\mu)$ for some inner function $\varphi\in \mbox{Mult}(D(\mu)).$
\end{thm}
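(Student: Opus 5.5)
We prove the two directions separately. The "if" direction is the easy one.

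\emph{Sufficiency.} Suppose $\varphi$ is inner and $\varphi\in\mbox{Mult}(D(\mu))$, and set $\mathcal M=\varphi D(\mu)$. Consider $T:D(\mu)\to\mathcal M$, $Tg=\varphi g$. It is bounded, by the closed graph theorem. It is injective and intertwines $M_z$. To show $T$ is bounded below, we use that $\mu$ is mutually absolutely continuous with respect to $m$, so $|\varphi|=1$ $\mu$-a.e. on $\mathbb T$. The Richter--Sundberg formula for the local Dirichlet integral gives, for $g\in D(\mu)$,
\[
D_\zeta(\varphi g)=D_\zeta(g)+|g(\zeta)|^2D_\zeta(\varphi)
\]
for $\mu$-a.e. (indeed $m$-a.e.) $\zeta$. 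Integrating against $\mu$ yields $D_\mu(\varphi g)\geqslant D_\mu(g)$. Since also $\|\varphi g\|_{H^2}=\|g\|_{H^2}$, we get $\|\varphi g\|_\mu\geqslant\|g\|_\mu$. Hence $T$ has closed range, so $\mathcal M$ is closed, and $T$ is an invertible operator implementing the similarity.

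\emph{Necessity.} Let $S:D(\mu)\to\mathcal M$ be invertible with $SM_z=M_zS$. Put $\psi=S1\in\mathcal M$. Then $Sp=p\psi$ for polynomials $p$. Since $S$ is bounded, $\|p\psi\|_\mu\leqslant C\|p\|_\mu$. By density of polynomials and continuity of point evaluations, $\psi\in\mbox{Mult}(D(\mu))$ and $S=M_\psi$. Surjectivity gives $\mathcal M=\psi D(\mu)$, and invertibility gives $\|\psi g\|_\mu\geqslant c\|g\|_\mu$.

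Write $\psi=\varphi F$ with $\varphi$ inner and $F$ outer. The goal is to show that $\varphi\in\mbox{Mult}(D(\mu))$ and that $F$ is invertible in $\mbox{Mult}(D(\mu))$; then $\psi D(\mu)=\varphi F D(\mu)=\varphi D(\mu)$.

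\emph{Step 1: $F$ and $1/F$ are bounded on $\mathbb T$.} Since $\psi\in H^\infty$, $F$ is bounded. For the lower bound, $\mathcal M$ is a closed $M_z$-invariant subspace, and the similarity preserves the property $\dim(\mathcal M\ominus z\mathcal M)=1$. To bound $1/F$, apply the lower estimate to test functions such as $g=k_\lambda$, normalized reproducing kernels, or $g=$ outer functions concentrated near arcs. The estimate $\|\psi g\|_\mu\geqslant c\|g\|_\mu$ involves both an $H^2$ part and a Dirichlet part. I would again use Richter--Sundberg:
\[
D_\zeta(\psi g)=D_\zeta(F g)+|F g(\zeta)|^2D_\zeta(\varphi).
\]
Then I would try to deduce $|F|\geqslant c$ a.e. on $\mathbb T$ by testing against functions $g$ whose $D_\mu$-norm is small compared to their $H^2$ norm on a set where $|F|$ is small, namely the $D(\mu)$-normalized $H^2$-peaking functions. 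This lower bound is the step I expect to be the main obstacle. The fallback is a spectral argument: the similarity forces $\sigma(M_\psi)$ on $\mbox{Mult}(D(\mu))$ to avoid $0$ in the appropriate sense, via Chartrand's Carleson-measure characterization of multipliers.

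\emph{Step 2: from a lower bound on $F$ to the conclusion.} Once $|F|\geqslant c$ on $\mathbb T$, the outer function $F$ satisfies $1/F\in H^\infty$. For $g\in D(\mu)$, write $\psi g\in D(\mu)$ as $\varphi\cdot(Fg)$. By the Richter--Sundberg formula, $D_\mu(Fg)\leqslant D_\mu(\psi g)<\infty$, so $F\in\mbox{Mult}(D(\mu))$. To get $1/F\in\mbox{Mult}(D(\mu))$, use the invertibility of $M_\psi$ onto its range together with the division property of $D(\mu)$ by inner factors (again from Richter--Sundberg): any $h\in D(\mu)$ equals $\psi g$ for some $g$ only modulo the inner factor. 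Comparing local Dirichlet integrals $D_\zeta(Fg)$ and $D_\zeta(g)$ with $|F|$ bounded above and below should give $D_\mu(g/F)\lesssim D_\mu(g)+\|g\|^2$, using the lower bound on $\|\psi g\|_\mu$.

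Finally, $\varphi=\psi\cdot(1/F)$ is a product of multipliers, so $\varphi\in\mbox{Mult}(D(\mu))$. Since $FD(\mu)=D(\mu)$, we conclude $\mathcal M=\varphi D(\mu)$.
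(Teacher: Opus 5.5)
There is a genuine gap in the necessity direction, at your Step 1. Your outline is the same as the paper's: identify the similarity as $M_\psi$, factor $\psi=\varphi F$, show $F$ is an invertible multiplier, and conclude $\mathcal M=\varphi D(\mu)$ with $\varphi=\psi/F\in\mbox{Mult}(D(\mu))$. Your sufficiency direction and your proof that $F\in\mbox{Mult}(D(\mu))$ via Theorem \ref{carlrep} are fine.

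The gap is that you never prove that $M_\psi$ bounded below forces $|\psi|\geqslant c>0$ $m$-a.e. on $\mathbb T$; you flag it yourself as the main obstacle. The routes you suggest do not close it:
\begin{itemize}
\item Testing with functions whose $H^2$ mass sits where $|F|$ is small does not isolate the boundary values of $\psi$. The Dirichlet part $D_\mu(\psi g)$ involves $\psi'$ and is not made small by smallness of $|\psi|$ on a set, so no contradiction with $\|\psi g\|_\mu\geqslant c\|g\|_\mu$ follows.
\item The spectral fallback only restates that $0$ is not in the approximate point spectrum of $M_\psi$. ($M_\psi$ need not be invertible, e.g.\ $\psi=z$.) Turning that into a pointwise bound on $\mathbb T$ is exactly what must be proved.
\item Chartrand's Carleson-measure characterization describes membership in $\mbox{Mult}(D(\mu))$ and gives no lower bounds.
\end{itemize}
This step is also precisely where the hypothesis that $\mu$ and $m$ are mutually absolutely continuous enters, and your proposal never uses it essentially.

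The paper fills the step by citing \cite[Corollary 6.6]{richter1991representation}: if $\psi\in\mbox{Mult}(D(\mu))$ and $M_\psi$ is bounded below, then $1/\psi\in L^\infty(\mu)=L^\infty(m)$. You can also get it from the tool you already use, but the useful test functions go the opposite way from yours. Make the Dirichlet part dominate rather than the $H^2$ part:
\begin{itemize}
\item Theorem \ref{carlrep}(i) with the inner function $z^n$, for which $D_\zeta(z^n)=n$, gives $\|z^nh\|_\mu^2=\|h\|_\mu^2+n\int_{\mathbb T}|h|^2\,d\mu$ for $h\in D(\mu)$.
\item Apply $\|\psi z^ng\|_\mu\geqslant c\|z^ng\|_\mu$, divide by $n$ and let $n\to\infty$. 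This yields $\int_{\mathbb T}(|\psi|^2-c^2)|p|^2\,d\mu\geqslant 0$ for all polynomials $p$.
\item By Fej\'er--Riesz and uniform approximation of nonnegative continuous functions by nonnegative trigonometric polynomials, $|\psi|\geqslant c$ $\mu$-a.e. Since $m\ll\mu$, this holds $m$-a.e.
\item Smirnov's theorem then gives $1/F\in H^\infty$.
\end{itemize}

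Finally, your argument for $1/F\in\mbox{Mult}(D(\mu))$ (``modulo the inner factor'', invoking the lower bound on $\|\psi g\|_\mu$) is unclear, and that lower bound is not needed. As in Lemma \ref{invertible multiplier}, the identity $(g/F)'=(2Fg'-(Fg)')/F^2$ gives $D_\mu(g/F)\lesssim D_\mu(g)+D_\mu(Fg)$ from $F\in\mbox{Mult}(D(\mu))$ and $1/F\in H^\infty$ alone.
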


 If $g$ is an \textit{extremal function} in $D$, that is, $g$ is an unit vector in $D$ satisfying  $\langle g, z^ng\rangle_{D}=0$, $n\geqslant1,$ then $gD(m_g)$ is a closed invariant subspace of  $(M_z, D)$. But, in general, it is not known  for which functions $g\in D$, the space $gD(m_g)$ is a closed invariant subspace of $(M_z, D)$ (see \cite[p. 346]{richter1991representation}). In the following theorem, we consider a special case of this problem. This, in particular, generalizes the result in \eqref{example of Richter}.
 Let $\mathcal O(\overline{\mathbb D})$ denote the set of functions holomorphic in a neighbourhood of $\overline{\mathbb D}$. 

\begin{thm}\label{thminvsubspace}
    Let $f\in \mathcal O(\overline{\mathbb D})$  and $\sigma\in M_{+}(\mathbb T)$ be a   measure singular with respect to $m$. Let $g=fS_{\sigma}.$ Then $g D(m_g)$ is a closed invariant subspace of $(M_z, D)$ if and only if $\mbox{supp}(\sigma)=Z_{\mathbb{T}}(f)$ and
     all the roots of $f$ on $\mathbb T$ are simple.
    \end{thm}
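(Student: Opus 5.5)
Since $g=fS_\sigma$ has $|g|=|f|$ a.e. on $\mathbb T$, we have $m_g=m_f$. So the statement asks when $fS_\sigma D(m_f)$ is a closed invariant subspace of $(M_z,D)$. Because $f\in\mathcal O(\overline{\mathbb D})\subseteq\mathcal A_1$, Theorem \ref{lemma4.4} gives $D(m_f)=\{h\in H^2: fh\in D\}$ with the norm equivalence \eqref{eqnequivalentnorm}. The plan is to translate everything into statements about $D(m_f)$ and use Theorem \ref{singmultiff} together with the Richter--Sundberg local Dirichlet integral formula.

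\emph{Sufficiency.} Assume $\mathrm{supp}(\sigma)=Z_{\mathbb T}(f)$ and that all zeros of $f$ on $\mathbb T$ are simple. Then $Z_{\mathbb T}(f)$ is finite, so $\sigma=\sum_j c_j\delta_{\zeta_j}$ with $c_j>0$.

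First, Theorem \ref{singmultiff} gives $S_\sigma\in\mbox{Mult}(D(m_f))$, so $S_\sigma D(m_f)\subseteq D(m_f)$. Hence $gh=f(S_\sigma h)\in D$ for $h\in D(m_f)$, i.e. $gD(m_f)\subseteq D$, and $z\,gD(m_f)\subseteq gD(m_f)$.

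For closedness I would show $\|gh\|_D\asymp\|h\|_{m_f}$. By \eqref{eqnequivalentnorm} it suffices to compare $D(fS_\sigma h)$ with $D(fh)$, plus $H^2$ norms. Writing $u=fh\in D$, the Richter--Sundberg formula (Carleson's formula for $D(S_\sigma u)$) gives
\[
D(S_\sigma u)=D(u)+\int_{\mathbb T}|u|^2\,\frac{2}{|\zeta-\cdot|^2}\ldots ,
\]
that is, $D(S_\sigma u)=D(u)+\int_{\mathbb T}\int |u|^2P\,d\sigma$-type terms. For atomic $\sigma$ this reduces to $D(u)+2\sum_j c_j\int_{\mathbb T}\frac{|u(\zeta)|^2}{|\zeta-\zeta_j|^2}\,dm(\zeta)$. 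Since $u=fh$ and $f$ has simple zeros at the $\zeta_j$, we have $|f(\zeta)|^2\asymp|\zeta-\zeta_j|^2$ near $\zeta_j$. So the added term is comparable to $\|h\|_{H^2}^2$, which is bounded by a constant times $\|h\|_{m_f}^2$. This gives the two-sided estimate, so $gD(m_f)$ is closed in $D$.

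\emph{Necessity.} Suppose $gD(m_f)$ is a closed invariant subspace of $D$. Then $g\cdot 1\in D$. By the Carleson/Richter--Sundberg formula, $fS_\sigma\in D$ forces $\int_{\mathbb T}|f|^2P_\sigma\,dm<\infty$ type conditions, i.e. $\int\!\int\frac{|f(\zeta)|^2}{|\zeta-\eta|^2}\,dm(\zeta)\,d\sigma(\eta)<\infty$. Since $f$ is analytic across $\mathbb T$, this forces $\sigma$ to be carried by zeros of $f$, giving $\mathrm{supp}(\sigma)\subseteq Z_{\mathbb T}(f)$. Moreover $\sigma$ is then a finite sum of atoms, and for an atom at a zero of order $k$ the integrand is $\asymp|\zeta-\zeta_j|^{2k-2}$, which is always integrable.

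To get the reverse inclusion and simplicity, I would use closedness. Consider the bounded map $T\colon D(m_f)\to gD(m_f)\subseteq D$, $h\mapsto gh$. It is injective and has closed range, so by the open mapping theorem $\|gh\|_D\gtrsim\|h\|_{m_f}$.

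For the inclusion $Z_{\mathbb T}(f)\subseteq\mathrm{supp}(\sigma)$: suppose $f(\zeta_0)=0$ with no atom of $\sigma$ at $\zeta_0$. Then near $\zeta_0$ the factor $S_\sigma$ is harmless, and $\|fS_\sigma h\|_D\lesssim\|f h\|_D$ plus terms controlled away from $\zeta_0$. One should then be able to choose $h$ concentrating at $\zeta_0$ (e.g.\ normalized reproducing-kernel-like functions, or powers localizing at $\zeta_0$) for which $\|h\|_{H^2}\to\infty$ relative to $\|fh\|_D$, contradicting the lower bound. This is really the statement that $D(m_f)$ norms are not controlled by $D(fh)$ alone at a zero.

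For a zero $\zeta_j$ of order $k\ge2$: the added Carleson term $\int|fh|^2|\zeta-\zeta_j|^{-2}\,dm$ is $\lesssim\int|h|^2|\zeta-\zeta_j|^{2k-2}\,dm$. This is a weaker weight than the one needed to control $\|h\|_{H^2}$, so the same localized test functions should violate the lower bound.

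\emph{Main obstacle.} I expect the hardest step to be constructing these test functions (e.g.\ $h_r(z)=(1-r)^{a}/(1-r\bar\zeta_0 z)^{b}$ with suitable $a,b$) and carefully estimating both $D(fh_r)$ and $D(fS_\sigma h_r)$ via the local Dirichlet integral.
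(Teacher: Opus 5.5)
Your sufficiency argument and the inclusion $\mbox{supp}(\sigma)\subseteq Z_{\mathbb T}(f)$ (from $g\in D$) coincide with the paper's proof. For the claim that ``the added term is comparable to $\|h\|_{H^2}^2$'' you need the global bound $\sum_j c_j|f(\zeta)|^2/|\zeta-\zeta_j|^2\geqslant C>0$ on all of $\mathbb T$, which the paper proves. It holds near each $\zeta_j$ by simplicity, and elsewhere because $|f|$ is bounded below off its finitely many zeros.

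\textbf{The gap.} The rest of necessity is not proved. When some zero $\zeta_0$ of $f$ on $\mathbb T$ is multiple, or carries no mass of $\sigma$, you never reach a contradiction; you only expect that suitable test functions exist. The target you name, ``$D(m_f)$ norms are not controlled by $D(fh)$ alone'', is not the right one, since $D(fh)$ appears on both sides of the lower bound. You also only examine the weight at $\zeta_0$ itself, although atoms $\zeta_j\neq\zeta_0$ contribute as well.

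\textbf{The missing reduction.} This is what makes the paper's argument work. In either bad case, every weight $|f(\zeta)|^2/|\zeta-\zeta_j|^2$ with $c_j>0$ satisfies $\lesssim|\zeta-\zeta_0|^2$ on $\mathbb T$. So the Richter--Sundberg formula gives $D(gh)\leqslant D(fh)+C\|(z-\zeta_0)h\|_{H^2}^2$. Since $f/(z-\zeta_0)\in\mathcal O(\overline{\mathbb D})\subseteq\mbox{Mult}(D)$, also $\|fh\|_D\lesssim\|(z-\zeta_0)h\|_D$. Combining these with the lower bound $\|h\|_{H^2}^2+D(fh)\lesssim\|h\|_{m_f}^2\lesssim\|gh\|_D^2=\|fh\|_{H^2}^2+D(gh)$ yields $\|h\|_{H^2}\lesssim\|(z-\zeta_0)h\|_D$ for all $h\in D(m_f)$.

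\textbf{Closing it.} The paper contradicts this inequality with Lemma~\ref{notcyclic}, which rests on the cyclicity of $z-\lambda$ in $D$. Your proposed test functions do it directly with $a=0$, $b=1$. Take $h_r(z)=(1-r\overline{\zeta_0}z)^{-1}\in D\subseteq D(m_f)$. Then $\|h_r\|_{H^2}^2=(1-r^2)^{-1}\to\infty$. On the other hand, $(z-\zeta_0)h_r=\zeta_0\bigl(-\tfrac1r+\tfrac{1-r}{r}h_r\bigr)$, so
\[
D\bigl((z-\zeta_0)h_r\bigr)=\tfrac{(1-r)^2}{r^2}\cdot\tfrac{r^2}{(1-r^2)^2}=(1+r)^{-2},\qquad \|(z-\zeta_0)h_r\|_{H^2}^2=1+\tfrac{1-r}{1+r},
\]
and both stay bounded. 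Once this is written out, your route is a more elementary, self-contained variant of the paper's: it proves Lemma~\ref{notcyclic} by hand and avoids the cyclicity theorem. As submitted, however, the necessity of simplicity of the roots and of $Z_{\mathbb T}(f)\subseteq\mbox{supp}(\sigma)$ remains unproved.
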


 The paper is organized as follows. In section \ref{prelim}, we recall some preliminary results for $D(\mu)$ spaces. In section \ref{section description}, we prove Theorem \ref{lemma4.4}, and discuss some properties of the multiplier algebras of these spaces. In section \ref{section inner multiplier}, we study inner functions which are in the multiplier algebra of $D(m_f)$. In particular, we provide  proofs of Theorems \ref{singmultiff} and \ref{singnotinmult}. We also discuss a sufficient condition for an infinite Blaschke to be a multiplier of $D(m_f)$ when $f\in \mathcal A_1$.  In section \ref{section similarityappln},  we prove Proposition \ref{simwrtzeros} along with its consequences including its application to weakly circular cyclic analytic $2$-isometries. Proofs of Theorems  \ref{invsubrestchar} and \ref{thminvsubspace} are provided in sections \ref{similarity invariant subspace} and \ref{section invsubspace}, respectively. Finally, in section \ref{concluding remarks}, we provide an example to show that for $f,g\in\mathcal{A}_1,$ the condition $Z_{\mathbb T}(f)=Z_{\mathbb T}(g)$ is not sufficient for the equality of $\mbox{Mult}(D(m_f))\cap \mathcal B$ and $\mbox{Mult}(D(m_g))\cap \mathcal B$.

\section{Preliminaries on \texorpdfstring{$D(\mu)$}{D(mu)} spaces}\label{prelim}
In this section, we discuss some results on $D(\mu)$ spaces which will be used throughout this paper. We first set some notations. 
If $\mu=\delta_{\zeta}$, the Dirac delta measure supported at $\zeta\in \mathbb T$, we use the notation $D_{\zeta}(f)$ instead of $D_{\delta_{\zeta}}(f)$. 
We will need the following lemma which is proved in  \cite[p. 358 and Proposition 2.2]{richter1991formula} (see also \cite[Theorems 7.2.5 and 7.2.1]{el2014primer}). Its second part is known as the local Douglas formula. 
\begin{lem}\label{local Douglas}
Let    $f\in H^2$, and $\eta \in \mathbb T$. Then the following holds:
\begin{itemize}
\item[(i)] If $\alpha\in \mathbb C$ and $\int_{\mathbb{T}}\frac{|f(\zeta)-\alpha|^2}{|\zeta-\eta|^2}dm(\zeta)<\infty$, then $f(\eta)$ exists and $f(\eta)=\alpha$. 
    \item[(ii)] If $f(\eta)$ exists, then 
    $$D_{\eta}(f)=\int_{\mathbb T}\frac{|f(\zeta)-f(\eta)|^2}{|\zeta-\eta|^2} dm(\zeta).$$
    Otherwise, $D_{\eta}(f)=\infty.$
    \item[(iii)] $f\in D(\delta_{\eta})$ if and only if $f=(z-\eta)g+\alpha$ for some $g\in H^2$ and $\alpha\in\mathbb C$.
    \end{itemize}
\end{lem}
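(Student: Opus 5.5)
The plan is to prove (iii) and (ii) together through one identity, and then get (i) from a Smirnov-class argument. Replacing $f$ by $f(\eta\,\cdot)$ leaves every quantity unchanged, so we may assume $\eta=1$.

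\textbf{Core identity.} For $g\in H^2$ and $\alpha\in\mathbb C$ we aim to show
\begin{equation*}
D_{1}\bigl(\alpha+(z-1)g\bigr)=\|g\|_{H^2}^2 .
\end{equation*}
We first prove it for polynomials $g$. Write $P(z,1)=\sum_{n\in\mathbb Z} r^{|n|}e^{in\theta}$ and expand $|f'|^2$ in powers of $z$ and $\bar z$. Then $D_1(f)=\int_{\mathbb D}|f'|^2P(z,1)\,dA$ becomes an explicit finite Hermitian form in the Taylor coefficients of $f$. We check that it equals $\sum|b_k|^2$, where $b_k$ are the coefficients of $g=(f-f(1))/(z-1)$.

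A cleaner route is to start from $D_1(1)=0$ and use the recursion $D_\mu(zh)=D_\mu(h)+\int|h|^2\,d\mu$, applied to $h=g$ with $f=(z-1)g$. Together with polarization, this should give the identity by induction on the degree of $g$. This polynomial computation is the step I expect to be the main obstacle, since the Hermitian-form bookkeeping has to come out exactly as $\|g\|_{H^2}^2$.

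\textbf{Extension to $H^2$.} Next we pass from polynomials to general $g\in H^2$. Let $g_N$ be the Taylor partial sums of $g$. Then $(z-1)g_N\to(z-1)g$ locally uniformly with derivatives, so Fatou's lemma gives $D_1((z-1)g)\le\|g\|^2$. For the reverse inequality we use dilations $f_r(z)=f(rz)$. Since $|f'|^2$ is subharmonic and the weight is a positive harmonic function, $D_1(f_r)\to D_1(f)$ monotonically, and $g_r\to g$ in $H^2$. This proves (iii) and the formula in (ii) whenever $f=\alpha+(z-1)g$.

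\textbf{Finishing (ii) and (iii).} Conversely, suppose $D_1(f)<\infty$. For each $r<1$ apply the identity to $f_r$: this bounds $\|(f_r-f_r(1))/(z-1)\|_{H^2}$ uniformly in $r$. We then take weak limits in $H^2$, using that $f_r(1)=f(r)$ has a limit along a subsequence (the bounded $H^2$ norms control $f(r)-f(0)$). This produces the representation $f=\alpha+(z-1)g$. It remains to show $f(1)=\alpha$. An $H^2$ function satisfies $|g(z)|=o((1-|z|)^{-1/2})$, so $(z-1)g(z)\to0$ radially. Hence $\alpha=f(1)$, and by the identity $D_1(f)=\|g\|^2=\int|f-f(1)|^2/|\zeta-1|^2\,dm$. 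If $f(1)$ fails to exist, then $f$ is not of this form, so $D_1(f)=\infty$.

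\textbf{Proof of (i).} Put $g=(f-\alpha)/(z-1)$. Since $f-\alpha\in H^2$ and $z-1$ is outer, $g$ lies in the Smirnov class $N^+$. By hypothesis its boundary function is in $L^2$. Smirnov's theorem then gives $g\in H^2$. The radial-limit argument above now yields $f(1)=\alpha$.
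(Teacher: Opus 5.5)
The paper gives no proof of this lemma; it quotes it from Richter--Sundberg and from the primer of El-Fallah et al., so your proposal is a self-contained reconstruction. Several pieces are correct:
\begin{itemize}
\item Part (i), via $N^+\cap L^2(\mathbb T)=H^2$, is correct.
\item The polynomial identity is routine rather than an obstacle. Directly, $D_1(z^j,z^k)=\min(j,k)$, so for $p=\sum p_jz^j$ one gets $D_1(p)=\sum_{n\ge1}|\sum_{j\ge n}p_j|^2$. This telescopes to $\|g\|_{H^2}^2$ when $p=(z-1)g$.
\item The Fatou step correctly gives $D_1(\alpha+(z-1)g)\le\|g\|_{H^2}^2$, which is the ``if'' half of (iii).
\end{itemize}

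The genuine gap is the dilation step. It carries both the reverse inequality and the ``only if'' half of (iii), since your uniform bound on $\|(f_r-f_r(1))/(z-1)\|_{H^2}$ is exactly $\sup_r D_1(f_r)<\infty$. What you need is an upper bound $\limsup_{r\to1}D_1(f_r)\le D_1(f)$, and ``$|f'|^2$ is subharmonic and the weight is positive harmonic'' does not give it, for two reasons:
\begin{itemize}
\item Subharmonicity controls circle means, but the weight is not radial. After $w=rz$ one has $D_1(f_r)=\int_{r\mathbb D}|f'(w)|^2\frac{r^2-|w|^2}{|r-w|^2}\,dA(w)$. This weight blows up near $w=r$, so it is not dominated by $P(w,1)$.
\item Monotonicity of $r\mapsto D_1(f_r)$ (which happens to be true) combined with Fatou only yields $\lim_r D_1(f_r)\ge D_1(f)$. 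That is the inequality you already had.
\end{itemize}

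Here is one way to close the gap. For $f=\sum a_jz^j\in H^2$, set $S_{n,m}=\sum_{j=n}^m a_j$. Expanding $P(\cdot,1)$ in Fourier series gives
\[
\int_{|z|<\rho}|f'|^2P(z,1)\,dA(z)=\sum_{j,k\ge1}a_j\bar a_k\min(j,k)\rho^{2\max(j,k)}=(1-\rho^2)\sum_{n\ge1}\sum_{m\ge n}\rho^{2m}|S_{n,m}|^2,
\]
while $D_1(f_r)=\sum_{n\ge1}|\sum_{j\ge n}a_jr^j|^2$. Abel summation gives $\sum_{j\ge n}a_jr^j=(1-r)\sum_{m\ge n}S_{n,m}r^m$. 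Cauchy--Schwarz then gives $|\sum_{j\ge n}a_jr^j|^2\le(1-r)\sum_{m\ge n}r^m|S_{n,m}|^2$. Summing over $n$ yields $D_1(f_r)\le\int_{|z|<\sqrt r}|f'|^2P(z,1)\,dA\le D_1(f)$.

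With this bound your argument goes through after two small repairs.
\begin{itemize}
\item $(f_r-f_r(1))/(z-1)$ is not $g_r$ but $rg_r+(r-1)\frac{g_r-g(r)}{z-1}$, so ``$g_r\to g$ in $H^2$'' is beside the point. Instead, note that these quotients are bounded in $H^2$ by $D_1(f)^{1/2}$ and converge pointwise to $g$, because $f(r)\to\alpha$. Weak lower semicontinuity then gives $\|g\|_{H^2}^2\le\liminf_r D_1(f_r)\le D_1(f)$.
\item In (ii) you must also handle the case where $f(\eta)$ exists but $D_\eta(f)=\infty$. If the integral were finite, your argument for (i) would give $g\in H^2$, and hence $D_\eta(f)=\|g\|_{H^2}^2<\infty$, a contradiction.
\end{itemize}
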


We now recall the notion of angular derivative in the sense of Carath\'{e}odory. 
Let $\zeta \in \mathbb T$, and $\kappa\in [1,\infty).$ The Stolz angle with vertex at $\zeta$ is the region defined by 
\begin{equation*}
   S_{\kappa}(\zeta):=\{z\in \mathbb D: |\zeta-z|\leqslant \kappa(1-|z|)\}. 
\end{equation*} 
For a function $f\in\mathcal O(\mathbb D),$ $L$ is called the \emph{nontangential limit} of $f$ at $\zeta,$ if $\lim_{z\to \zeta}f(z)= L$ as $z\to \zeta$ in each nontangential approach region $S_{\kappa}(\zeta)$. We say that a holomorphic self-map $\varphi$ on $\mathbb D$ has an \emph{angular derivative} in the sense of \textit{Carath\'{e}odory} at $\zeta\in\mathbb T$ if for some $\eta\in\mathbb T,$ the nontangential limit of 
     $\frac{\eta-\varphi(z)}{\zeta-z}$ exists at $\zeta.$ The limit is called the angular derivative of $\varphi$ at $\zeta$ and is denoted by $\varphi'(\zeta).$
 Whenever $\varphi$ fails to have angular derivative in the sense of
Carath\'{e}odory at $\zeta$, we write $|\varphi'(\zeta)| = \infty$. 

The following theorem which follows from \cite[Lemma 3.4, Proposition 3.5 and Theorem 3.1]{richter1991formula} (see also, \cite[Theorem 7.6.1]{el2014primer}), will be used repeatedly in this paper.

\begin{thm}[Richter--Sundberg]\label{carlrep}
  Let $\zeta\in \mathbb T$ and $\varphi$ be an inner function. Then the following holds:
  \begin{enumerate}
      \item[{\rm(i)}]For any  $f\in H^2$,  we have  
      $$ D_{\zeta}(\varphi f)= D_{\zeta}(f)+ D_{\zeta}(\varphi)|f(\zeta)|^2.$$ (In the above identity, $D_{\zeta}(\varphi f)=D_{\zeta}(f)=\infty$ whenever $f(\zeta)$ fails to exist. Moreover, if $f(\zeta)=0,$ then $D_{\zeta}(\varphi)|f(\zeta)|^2$ is interpreted as $0,$ regardless of whether $D_{\zeta}(\varphi)=\infty.$)
      \item[{\rm(ii)}] $\varphi\in D(\delta_\zeta)$ if and only if $\varphi$ has finite angular derivative at $\zeta.$ In addition, $D_\zeta(\varphi)=|\varphi'(\zeta)|. $
      \item[\rm(iii)] If $\varphi=BS_{\sigma}$, where $B$ and $S_\sigma$ are given by \eqref{eqnBlaschke} and \eqref{eqnSingular} respectively, then 
\begin{align*}
    D_{\zeta}(\varphi)&=D_{\zeta}(B)+D_{\zeta}(S_\sigma)\\
    &=\sum_n\frac{1-|z_n|^2}{|\zeta-z_n|^2}+\int_{\mathbb T}
\frac{2}{|e^{it}-\zeta|^2}
\,d\sigma(e^{it}).
\end{align*} If either of $B$ or $S_\sigma$ in the factorisation of $\varphi$ is absent, then the corresponding summand in the expression for $D_{\zeta}(\varphi)$ will be $0.$ 
  \end{enumerate}
     \end{thm}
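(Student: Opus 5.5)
The plan is to reduce everything to the local Douglas formula of Lemma \ref{local Douglas}, in its Hilbert-space form: if $h(\zeta)$ exists and $D_\zeta(h)<\infty$, then by Lemma \ref{local Douglas}(iii) we may write $h=h(\zeta)+(z-\zeta)q$ with $q\in H^2$, and $D_\zeta(h)=\|q\|_{H^2}^2$. I would prove (i) first in the \emph{good case}, then remove the restrictions, and finally derive (ii) and (iii).

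\textbf{Good case of (i).} Assume $f(\zeta)$ exists, $D_\zeta(f)<\infty$ and $D_\zeta(\varphi)<\infty$. Write $f=f(\zeta)+(z-\zeta)h$ and $\varphi=\varphi(\zeta)+(z-\zeta)k$ with $h,k\in H^2$, $\|h\|^2=D_\zeta(f)$ and $\|k\|^2=D_\zeta(\varphi)$. Then
\[
\varphi f-\varphi(\zeta)f(\zeta)=(z-\zeta)\bigl(f(\zeta)k+\varphi h\bigr),
\]
so
\[
D_\zeta(\varphi f)=|f(\zeta)|^2\|k\|^2+\|\varphi h\|^2+2\,\mathrm{Re}\,\overline{f(\zeta)}\langle \varphi h,k\rangle .
\]
Since $\varphi$ is inner, $\|\varphi h\|=\|h\|$. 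For the cross term, use $|\varphi|=1$ and $|\varphi(\zeta)|=1$ on $\mathbb T$ to rewrite
\[
\overline{\varphi}\,k=\frac{1-\varphi(\zeta)\overline{\varphi}}{z-\zeta}
\]
as the complex conjugate of $z\cdot u$ for an explicit $u\in H^2$ (essentially a unimodular multiple of $k$). Then $\langle \varphi h,k\rangle=\int h\,\overline{\varphi}\,\overline{k}\,dm$ is the integral of $z\,u\,h$ (up to a constant), an $H^1$ function vanishing at $0$, so the cross term is zero. This gives $D_\zeta(\varphi f)=D_\zeta(f)+|f(\zeta)|^2 D_\zeta(\varphi)$.

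\textbf{Degenerate cases of (i).} These are the main obstacle; there are three.
\begin{enumerate}
\item[(a)] $f(\zeta)$ fails to exist. Then $D_\zeta(f)=\infty$ by Lemma \ref{local Douglas}(ii), and the claim is that $D_\zeta(\varphi f)=\infty$ as well.
\item[(b)] $f(\zeta)=0$, with $D_\zeta(\varphi)$ possibly infinite. Then $f=(z-\zeta)h$ and $\varphi f=(z-\zeta)\varphi h$. By Lemma \ref{local Douglas}(i), $(\varphi f)(\zeta)=0$ and $D_\zeta(\varphi f)=\|\varphi h\|^2=\|h\|^2=D_\zeta(f)$, which is the claim.
\item[(c)] $f(\zeta)\neq0$ and $D_\zeta(\varphi)=\infty$. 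Here I must show $D_\zeta(\varphi f)=\infty$.
\end{enumerate}
For (c), and similarly for (a), I would argue by approximation. Replace $\varphi$ by the inner functions $\varphi_n$ obtained by truncating its factorization: finitely many Blaschke factors, together with $\sigma$ restricted away from a shrinking neighbourhood of $\zeta$. Each $\varphi_n$ lies in the good case, and $D_\zeta(\varphi_n)\uparrow D_\zeta(\varphi)$ by monotonicity of the explicit sums in (iii), once (iii) is established for these nice pieces. Since $\varphi_n\to\varphi$ suitably (locally uniformly, with $\varphi/\varphi_n$ inner), Fatou-type lower semicontinuity of $D_\zeta$ together with the good-case identity applied to $\varphi=\varphi_n\cdot(\varphi/\varphi_n)$ should yield $D_\zeta(\varphi f)\geqslant |f(\zeta)|^2 D_\zeta(\varphi_n)\to\infty$.

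\textbf{Parts (ii) and (iii).} For (ii), I would compute $D_\zeta(\varphi)$ through the area-integral definition with the Poisson weight $P(z,\zeta)$. By Green's formula, $D_\zeta(\varphi)$ equals the limit of $\frac{1-|\varphi(r\zeta)|^2}{1-r^2}$-type expressions. Julia--Carath\'eodory then identifies finiteness of this quantity with the existence of a finite angular derivative, and gives its value as $|\varphi'(\zeta)|$. For (iii), apply (i) with $f=S_\sigma$, which gives $D_\zeta(BS_\sigma)=D_\zeta(S_\sigma)+D_\zeta(B)$ since $|S_\sigma(\zeta)|=1$ whenever it exists, and $D_\zeta=\infty$ on both sides otherwise. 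Iterating (i) over the Blaschke factors, and passing to the limit as above, reduces everything to single factors. For a single factor $b_a$ one has directly $D_\zeta(b_a)=\frac{1-|a|^2}{|\zeta-a|^2}$. For $S_\sigma$, approximate $\sigma$ by discrete measures: for a point mass, $D_\zeta(S_{c\delta_\eta})=\frac{2c}{|\eta-\zeta|^2}$ by direct computation or via (ii). Additivity of $D_\zeta$ under products by (i), plus monotone limits, then gives $\int\frac{2\,d\sigma(e^{it})}{|e^{it}-\zeta|^2}$.
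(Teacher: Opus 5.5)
The paper gives no proof of this theorem; it quotes it from Richter--Sundberg and from the primer of El-Fallah et al. So your outline has to stand on its own, and it has a genuine gap.

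Your good case is sound. The vanishing of the cross term is exactly the statement $\frac{\varphi-\varphi(\zeta)}{z-\zeta}\perp\varphi H^2$. The condition $|\varphi(\zeta)|=1$ follows from $D_\zeta(\varphi)<\infty$, since otherwise $|\varphi-\varphi(\zeta)|$ is bounded below a.e.\ on $\mathbb T$.

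\textbf{The gap: the case $D_\zeta(f)=\infty$.} This covers your (a), and also the subcase where $f(\zeta)$ exists but $D_\zeta(f)=\infty$, which your list never treats (your (b) tacitly assumes $f=(z-\zeta)h$ with $h\in H^2$). Approximating $\varphi$ cannot reach this case, because the defect sits in $f$. Even for $\varphi$ a single Blaschke factor, which is already in your good class, nothing in the proposal shows that $\varphi f\notin D(\delta_\zeta)$. The missing ingredient is the monotonicity $D_\zeta(f)\leqslant D_\zeta(\varphi f)$, which has a short proof. Suppose $\varphi f=a+(z-\zeta)q$ with $q\in H^2$. Write $f=P_+(\overline{\varphi}\,\varphi f)$ on $\mathbb T$ and use $P_+[(z-\zeta)u]=(z-\zeta)P_+u+\widehat{u}(-1)$ for $u\in L^2(\mathbb T)$. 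This gives $f=c+(z-\zeta)T_{\overline{\varphi}}q$ with $c$ constant. By Lemma \ref{local Douglas}, $D_\zeta(f)=\|T_{\overline{\varphi}}q\|_{H^2}^2\leqslant\|q\|_{H^2}^2=D_\zeta(\varphi f)$.

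\textbf{Case (c).} Your approximation argument silently needs the same fact, and has further holes:
\begin{enumerate}
\item[(1)] To apply the good case to $\varphi_n$ and $\psi_nf$, with $\psi_n=\varphi/\varphi_n$, you must already know $D_\zeta(\psi_nf)<\infty$.
\item[(2)] The identity then produces the factor $|(\psi_nf)(\zeta)|^2$, which you never compare with $|f(\zeta)|^2$.
\item[(3)] If $\sigma(\{\zeta\})>0$, your truncations converge to $\varphi/S_{\sigma(\{\zeta\})\delta_\zeta}$, not to $\varphi$. So Fatou's lemma does not force $D_\zeta(\varphi_n)\to\infty$.
\end{enumerate}
Once monotonicity is available, (c) needs no approximation at all. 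If $D_\zeta(\varphi f)<\infty$, then $f=f(\zeta)+(z-\zeta)h$ with $h\in H^2$. Hence $f(\zeta)\varphi=\varphi f-(z-\zeta)\varphi h\in D(\delta_\zeta)$, so $D_\zeta(\varphi)<\infty$ whenever $f(\zeta)\neq0$.

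\textbf{Smaller points.} In (iii), a discrete measure $\sigma_k\leqslant\sigma$ must vanish when $\sigma$ is continuous, so your discrete approximations give no monotone limits. Use the restrictions $\sigma|_{\mathbb T\setminus U}$ instead: $S_{\sigma|_{\mathbb T\setminus U}}$ is analytic at $\zeta$, (i) yields one inequality and Fatou the other. Treat an atom at $\zeta$ separately; there $S_\sigma(\zeta)=0$ radially. So, contrary to your remark, $S_\sigma(\zeta)$ may exist without being unimodular.

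In (ii), Green's formula plus Fatou gives only $D_\zeta(\varphi)\leqslant\liminf_{r\to1}\frac{1-|\varphi(r\zeta)|^2}{1-r^2}$. For the reverse inequality, pair $\frac{\varphi-\varphi(\zeta)}{z-\zeta}\in H^2\ominus\varphi H^2$ with the reproducing kernels $\frac{1-\overline{\varphi(r\zeta)}\varphi}{1-r\overline{\zeta}z}$ of $H^2\ominus\varphi H^2$. Their squared norms are $\frac{1-|\varphi(r\zeta)|^2}{1-r^2}$, so Cauchy--Schwarz gives the bound.
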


Since  for any $\mu\in M_+(\mathbb T)$ and $g\in H^2$,  $D_{\mu}(g)=\int_{\mathbb T}D_{\zeta}(g)d\mu(\zeta)$,  Theorem \ref{carlrep} yields a formula for $D_{\mu}(\varphi f)$ for an inner function $\varphi$ and a $H^2$-function $f$. When $\mu=m,$ this reduces to a special case of Carleson's representation formula \cite{carlesonrepformula}. We also record the following consequence of Theorem \ref{carlrep} for further use:  if $f\in H^2$ and  $\varphi$ is inner, then for $\mu\in M_+(\mathbb T)$, we have
 \begin{equation}
 \|\varphi f\|_{\mu}\geqslant \|f\|_{\mu}.\label{mu norm increase}
      \end{equation}

\begin{lem}\label{RichterequalDspaces}{\rm(\cite[Corollary 6.3]{richter1991representation})}
Let $\mu, \nu\in M_+(\mathbb T)$. Then $D(\mu) = D(\nu)$  with equivalence of norms if and only if  $\nu$ and $\mu$ are mutually absolutely continuous, and the Radon-Nikodym derivative $h = \frac{d\nu}{d\mu}$ satisfies $h, \frac{1}{h} \in L^{\infty}(\mu) = L^{\infty}(\nu).$
\end{lem}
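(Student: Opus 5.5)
The statement to prove is Lemma \ref{RichterequalDspaces}, a cited result of Richter. My plan is to compare the local Dirichlet integrals using the identity $D_\mu(g)=\int_{\mathbb T}D_\zeta(g)\,d\mu(\zeta)$ from Theorem \ref{carlrep}.

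\emph{Sufficiency.} Suppose $\mu$ and $\nu$ are mutually absolutely continuous and $h=d\nu/d\mu$ satisfies $c\leqslant h\leqslant C$ $\mu$-a.e. Then for every $g\in\mathcal O(\mathbb D)$,
\[
D_\nu(g)=\int_{\mathbb T}D_\zeta(g)\,h(\zeta)\,d\mu(\zeta).
\]
This immediately gives $cD_\mu(g)\leqslant D_\nu(g)\leqslant CD_\mu(g)$. Equivalently, one can note that $c\varphi_\mu\leqslant\varphi_\nu\leqslant C\varphi_\mu$ pointwise on $\mathbb D$, since the Poisson kernel is positive. Adding $\|g\|_{H^2}^2$ to each side gives equality of the two spaces with equivalent norms.

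\emph{Necessity, first step: the constants on $D_\mu$ and $D_\nu$.} Suppose $D(\mu)=D(\nu)$ with $a\|g\|_\mu\leqslant\|g\|_\nu\leqslant b\|g\|_\mu$. To isolate the Dirichlet parts, I would replace $g$ by $\varphi g$ with $\varphi=B_n$, where $B_n$ is the finite Blaschke product with $n$ zeros at a point $z_0\in\mathbb D$. By Theorem \ref{carlrep}(i) and (iii),
\[
D_\mu(B_ng)=D_\mu(g)+n\int_{\mathbb T}P(z_0,\zeta)\,|g(\zeta)|^2\,d\mu(\zeta),
\]
and $\|B_ng\|_{H^2}=\|g\|_{H^2}$. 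Take $g$ to be a polynomial, apply the norm inequality to $B_ng$, divide by $n$ and let $n\to\infty$. This yields
\[
\int P(z_0,\cdot)\,|g|^2\,d\nu\leqslant b^2\int P(z_0,\cdot)\,|g|^2\,d\mu,
\]
together with the reverse inequality with constant $a^2$. Taking $g=1$ already gives $\varphi_\nu\leqslant b^2\varphi_\mu$.

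\emph{Necessity, second step: the measures themselves.} Next I would let $z_0\to\eta$ radially. The Poisson measures $P(z_0,\cdot)\,d\mu$ are approximate identities, so with polynomials $g$ the inequality $\int P(z_0,\cdot)|g|^2\,d\nu\leqslant b^2\int P(z_0,\cdot)|g|^2\,d\mu$ should pass to the measures. A cleaner route is to use these inequalities for all $z_0$ and all polynomials $g$ and appeal to density: the functions $P(z_0,\cdot)|g|^2$ span a weak-$*$ dense cone in the positive continuous functions. One could also use the fact that positive trigonometric polynomials are exactly the functions $|g|^2$ (Fejér--Riesz). Either way this gives $\nu\leqslant b^2\mu$ and $\mu\leqslant a^{-2}\nu$ as measures. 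Those two inequalities give mutual absolute continuity and the bounds $a^2\leqslant h\leqslant b^2$.

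The main obstacle is this second step, deriving an inequality between the measures from inequalities tested only against the functions $P(z_0,\cdot)|g|^2$. I expect Fejér--Riesz, with $z_0=0$ (so $P\equiv1$), to settle it directly. It gives $\int p\,d\nu\leqslant b^2\int p\,d\mu$ for every positive trigonometric polynomial $p$, and such $p$ are uniformly dense in the positive continuous functions.
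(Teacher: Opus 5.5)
Your proof is correct. The paper gives no argument of its own here, since the lemma is quoted from Richter, and your necessity argument is the natural $2$-isometry argument, essentially the mechanism behind Richter's result.

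Only $z_0=0$ is needed. Taking $B_n=z^n$ gives $\|z^ng\|_\mu^2=\|g\|_\mu^2+n\int_{\mathbb T}|g|^2\,d\mu$ for polynomials $g$, hence $a^2\int|g|^2\,d\mu\leqslant\int|g|^2\,d\nu\leqslant b^2\int|g|^2\,d\mu$. Fej\'er--Riesz together with the positivity of Fej\'er means then yields $a^2\mu\leqslant\nu\leqslant b^2\mu$. So the remarks about letting $z_0\to\eta$ radially and about weak-$*$ dense cones can simply be deleted.
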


Throughout the paper, we use the notation $f \lesssim g$ (or equivalently $g \gtrsim f$) if there exists a constant $C>0$, independent of $f$ and $g$, such that
$f \leq Cg$.

%%%%%%%%%%%%%%%%%%%%%%%%%%%%%%%%%%%%%%%%%%%%%%%%%%%%%%%%%%%%%%%
\section{A characterization of  \texorpdfstring{$D(m_f)$}{D(mf)} when \texorpdfstring{$f\in \mathcal A_{\epsilon}$}{finmathcalAepsilon}}\label{section description}

For $0<\alpha\leqslant1,$ the \emph{analytic Lipschitz class} $\Lambda^{a}_{\alpha} $ is defined as the set of all functions $f\in A(\mathbb D)$ such that 
$$|f(z)-f(w)|\lesssim|z-w|^\alpha,~z,w\in \overline{\mathbb D}.$$

The following lemma provides a relation between the class $\mathcal A_{\epsilon}$ and  analytic Lipschitz class. 
\begin{lem} [Hardy-Littlewood]\label{RC_Lip_equiv}
    Let $f\in \mathcal{O}(\mathbb D)$ and $0<\epsilon \leqslant 1$. Then $f\in \mathcal{A}_{\epsilon}$ if and only if $f\in\Lambda^a_{\frac{1+\epsilon}{2}}.$
\end{lem}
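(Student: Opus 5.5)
The plan is to prove the classical Hardy--Littlewood correspondence between growth of $f'$ and Lipschitz smoothness, with exponent $\alpha:=\frac{1+\epsilon}{2}\in(\frac12,1]$. First I note the exponent bookkeeping. Since $1-|z|\leqslant 1-|z|^2\leqslant 2(1-|z|)$, the condition $f\in\mathcal A_\epsilon$ is equivalent to
$$|f'(z)|\lesssim (1-|z|)^{-\frac{1-\epsilon}{2}}=(1-|z|)^{\alpha-1},\quad z\in\mathbb D.$$
So it suffices to show that $|f'(z)|\lesssim(1-|z|)^{\alpha-1}$ holds if and only if $f\in\Lambda^a_\alpha$.

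\emph{($\Leftarrow$)} Suppose $f\in\Lambda^a_\alpha$. Fix $z\in\mathbb D$ and put $\rho=(1-|z|)/2$, so the closed disc $\{|w-z|\leqslant\rho\}$ lies in $\mathbb D$. By the Cauchy formula applied to $f-f(z)$,
$$f'(z)=\frac{1}{2\pi i}\int_{|w-z|=\rho}\frac{f(w)-f(z)}{(w-z)^2}\,dw .$$
Hence $|f'(z)|\leqslant \rho^{-1}\sup_{|w-z|=\rho}|f(w)-f(z)|\lesssim \rho^{\alpha-1}\asymp(1-|z|)^{\alpha-1}$. Squaring and multiplying by $(1-|z|^2)^{1-\epsilon}$ gives a bounded quantity, because $2(\alpha-1)+1-\epsilon=0$.

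\emph{($\Rightarrow$)} Suppose $|f'(z)|\leqslant C(1-|z|)^{\alpha-1}$.

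When $\alpha=1$ (that is, $\epsilon=1$), $f'$ is bounded. The mean value inequality on the convex set $\mathbb D$ gives $|f(z)-f(w)|\leqslant C|z-w|$. Thus $f$ is uniformly continuous and extends to $\overline{\mathbb D}$ with the same bound.

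When $\alpha<1$, I first note that $\int_0^1(1-t)^{\alpha-1}\,dt<\infty$. This shows that radial limits exist uniformly, so $f$ extends continuously to $\overline{\mathbb D}$; this also follows from \cite[Theorem 5.1]{duren1970theory}. By continuity it then suffices to prove the Lipschitz estimate for $z,w\in\mathbb D$. Set $\delta=|z-w|$ and $r=1-\delta$; if $r\leqslant 0$, then $\delta\geqslant1$ and the estimate follows from boundedness of $f$. Otherwise I connect $z$ to $w$ by a three-piece path, where $r_z:=\min(|z|,r)$ and $r_w:=\min(|w|,r)$:
\begin{itemize}
\item[(a)] the radial segment from $z$ to $r_z z/|z|$;
\item[(b)] a segment from $r_z z/|z|$ to $r_w w/|w|$, which stays in $\{|u|\leqslant r\}$, so $|f'|\lesssim\delta^{\alpha-1}$ on it;
\item[(c)] the radial segment out to $w$.
\end{itemize}
On (a), integrating $|f'|$ gives at most $C\int_{1-\delta}^{1}(1-t)^{\alpha-1}\,dt\lesssim\delta^\alpha$, and (c) is the same. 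The segment (b) has length $\lesssim\delta$, since radial projection onto $\{|u|\leqslant r\}$ is a contraction. So (b) contributes $\lesssim\delta\cdot\delta^{\alpha-1}=\delta^\alpha$. Summing the three pieces gives $|f(z)-f(w)|\lesssim|z-w|^\alpha$.

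The only mildly delicate point is this path construction: checking that the middle segment has length comparable to $\delta$ and stays at distance about $\delta$ from $\mathbb T$. I would handle it via the contraction property of radial projection onto the closed disc of radius $r$. Everything else is a routine estimate.
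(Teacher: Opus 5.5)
Your proof is correct. The paper gives no argument of its own here; it only cites the Hardy--Littlewood theorem via \cite[Theorem 5.1]{duren1970theory}. What you wrote is essentially the classical proof behind that citation: a Cauchy estimate for one implication, and for the other, integration of $f'$ along a radial--interior--radial path whose middle piece lies at distance at least $|z-w|$ from $\mathbb T$.

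The one real difference is in how the Lipschitz condition is formulated. Duren's statement is phrased in terms of the boundary function $f(e^{i\theta})$ belonging to $\Lambda_\alpha$. The paper's $\Lambda^a_\alpha$ instead requires the H\"older bound for all pairs of points in $\overline{\mathbb D}$, and your argument works directly with that definition. This is why your implication $\Lambda^a_\alpha\Rightarrow\mathcal A_\epsilon$ reduces to a Cauchy estimate on the disc of radius $(1-|z|)/2$. You never need to control $f'$ through boundary increments alone, via the Cauchy integral over $\mathbb T$. It is also why your converse treats arbitrary $z,w\in\mathbb D$ rather than boundary points only, which the nonexpansiveness of the nearest-point projection onto $\{|u|\leqslant 1-|z-w|\}$ handles cleanly.

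In short, the citation buys brevity, and your argument buys a self-contained proof that matches the paper's definition of $\Lambda^a_\alpha$ exactly.
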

\begin{proof}
    See \cite[Theorem 5.1]{duren1970theory}, see also \cite{HardyLittle}.
\end{proof}

We do not know whether the following lemma is known in the literature, so we provide a proof for completeness.

\begin{lem}\label{Wiener algebra}
    Let $0<\epsilon\leqslant 1$ and $f\in\mathcal{A}_\epsilon$. Then there exists a constant $C>0$ such that 
    \begin{equation*}
      \varphi_{m_f}(z)-|f(z)|^2\leqslant C(1-|z|^2),~z\in \mathbb D.  
    \end{equation*}
\end{lem}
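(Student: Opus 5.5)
We use the identity
$$\varphi_{m_f}(z)-|f(z)|^2=\int_{\mathbb T}|f(\zeta)-f(z)|^2P(z,\zeta)\,dm(\zeta),\qquad z\in\mathbb D.$$
To verify it, expand $|f(\zeta)-f(z)|^2=|f(\zeta)|^2-2\,\mathrm{Re}\big(\overline{f(z)}f(\zeta)\big)+|f(z)|^2$ and integrate against $P(z,\cdot)\,dm$. Since $f\in A(\mathbb D)$, the Poisson integral reproduces $f(z)$, and $\int P(z,\zeta)\,dm(\zeta)=1$. The integral therefore equals $\varphi_{m_f}(z)-2|f(z)|^2+|f(z)|^2$, which is the left-hand side. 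This reduces the lemma to bounding a weighted mean oscillation of the boundary values of $f$.

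By Lemma \ref{RC_Lip_equiv}, $f\in\Lambda^a_{\alpha}$ with $\alpha=\frac{1+\epsilon}{2}\in(\frac12,1]$. Hence for $\zeta\in\mathbb T$ and $z\in\mathbb D$ we have $|f(\zeta)-f(z)|^2\lesssim|\zeta-z|^{1+\epsilon}$. Writing $P(z,\zeta)=\frac{1-|z|^2}{|\zeta-z|^2}$, this gives
$$\varphi_{m_f}(z)-|f(z)|^2\lesssim(1-|z|^2)\int_{\mathbb T}|\zeta-z|^{\epsilon-1}\,dm(\zeta).$$

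It remains to show that $\sup_{z\in\mathbb D}\int_{\mathbb T}|\zeta-z|^{\epsilon-1}\,dm(\zeta)<\infty$, and this is the only step where $\epsilon>0$ matters. If $\epsilon=1$ the integrand is $1$ and there is nothing to prove. For $0<\epsilon<1$, the exponent $\epsilon-1$ is negative, so the integrand increases as $|\zeta-z|$ decreases. Write $z=re^{i\theta}$ and let $z^*=e^{i\theta}$ be its radial projection onto $\mathbb T$. Then $|\zeta-z|\geqslant\frac12|\zeta-z^*|$: if $|\zeta-z|<\frac12|\zeta-z^*|$, the triangle inequality would give $1-r=|z-z^*|>\frac12|\zeta-z^*|$, while $|\zeta-z|\geqslant 1-r$ always holds, a contradiction. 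Consequently
$$\int_{\mathbb T}|\zeta-z|^{\epsilon-1}\,dm(\zeta)\leqslant 2^{1-\epsilon}\int_{\mathbb T}|\zeta-1|^{\epsilon-1}\,dm(\zeta)\lesssim\int_0^\pi t^{\epsilon-1}\,dt<\infty.$$
This uniform bound completes the proof with a constant $C$ depending only on $\epsilon$ and the Lipschitz constant of $f$.

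The only potential obstacle is justifying the use of the Lipschitz estimate up to the boundary, i.e.\ that $|f(\zeta)-f(z)|\lesssim|\zeta-z|^\alpha$ holds for boundary points $\zeta$. This is part of the definition of $\Lambda^a_\alpha$, since the estimate is assumed for $z,w\in\overline{\mathbb D}$.
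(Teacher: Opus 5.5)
Your proof is correct and follows the paper's argument. You use the same identity $\varphi_{m_f}(z)-|f(z)|^2=(1-|z|^2)\int_{\mathbb T}\frac{|f(\zeta)-f(z)|^2}{|\zeta-z|^2}\,dm(\zeta)$, the same H\"older estimate from Lemma \ref{RC_Lip_equiv}, and the same uniform bound on $\int_{\mathbb T}|\zeta-z|^{\epsilon-1}\,dm(\zeta)$; the only difference is that you verify the identity and the integral bound by hand, where the paper cites \cite[Lemma 7.3.3]{el2014primer} and \cite[Theorem 1.7]{zhu}.
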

\begin{proof}  
Note that
    \begin{align*}\notag
    \varphi_{m_f}(z)-|f(z)|^2&=\int_{\mathbb T}P(z,\zeta)(|f(\zeta)|^2-|f(z)|^2)dm(\zeta)\\
    &=(1-|z|^2)\int_\mathbb T \frac{(|f(\zeta)|^2-|f(z)|^2)}{|z-\zeta|^2} dm(\zeta).
    \end{align*}
This together  with   \cite[Lemma 7.3.3]{el2014primer} yields
$$\varphi_{m_f}(z)-|f(z)|^2=(1-|z|^2)\int_{\mathbb T}\frac{|f(\zeta)-f(z)|^2}{|z-\zeta|^2}dm(\zeta).$$
Since $f\in\mathcal{A}_\epsilon,$ by Lemma \ref{RC_Lip_equiv}, we have $f\in\Lambda^a_\beta,$ where $\beta=\frac{1+\epsilon}{2}.$ Moreover, since $\epsilon\in(0,1],$ we have $\beta\in(\frac{1}{2},1].$
Consequently, we have
    \begin{align*}\notag
    \varphi_{m_f}(z)-|f(z)|^2&\lesssim(1-|z|^2)\int_{\mathbb T}\frac{|z-\zeta|^{2\beta}}{|z-\zeta|^2}dm(\zeta)\\
    &=(1-|z|^2)\int_{\mathbb T}\frac{1}{|z-\zeta|^{2-2\beta}}dm(\zeta)
    \end{align*}
   Moreover, since $\beta\in(\frac 1 2, 1]$, we have $\sup_{z\in \mathbb D}\int_{\mathbb T}\frac{1}{|z-\zeta|^{2-2\beta}}dm(\zeta)$ is finite (see \cite[Theorem 1.7]{zhu}). Therefore  
    \begin{align*}
     \varphi_{m_f}(z)-|f(z)|^2&\lesssim 1-|z|^2,~z\in\mathbb D.
\end{align*}
This completes the proof.
\end{proof}

\begin{proof}[Proof of Theorem \ref{lemma4.4}]
Without loss of generality assume that $0<\epsilon\leqslant 1$. Since $f\in\mathcal{A}_\epsilon$,  
 by \eqref{Wiener_in_RClass} we have $D(m_f)\subseteq \{g\in H^2: fg\in D\}.$

  For the reverse inclusion, let $g\in H^2$ and $fg\in D$. 
By Lemma \ref{Wiener algebra}, we see that 
   \begin{align}\label{eqnD(mf)}
  \notag D_{m_f}(g)&=\displaystyle\int_{\mathbb{D}}|g'(z)|^2\varphi_{m_f}(z)dA(z)\\ \notag 
  & \lesssim \displaystyle\int_{\mathbb{D}}|g'(z)|^2(|f(z)|^2+(1-|z|^2))dA(z)\\\notag 
  &= \int_\mathbb{D}|(fg)'(z)-(f'g)(z)|^2dA(z) +\int_\mathbb{D}|g'(z)|^2(1-|z|^2)dA(z)\\\notag 
  &\lesssim \int_{\mathbb{D}}|((fg)'(z))|^2dA(z)+\int_{\mathbb{D}}|f'(z)|^2|g(z)|^2 dA(z)\\ 
  &\;\;\;\;\;\;\;\;\;\;\;\;\;\;\;\;\;\;\;\;\;\;\;\;\;\;\;\;\;\;\;\;\;\;\;\;\;\;\;\;\;\;\;\;\;\;\;\;\;\;\;+
    \int_\mathbb{D}|g'(z)|^2(1-|z|^2)dA(z).
  \end{align}
    Since for any function $h\in \mathcal O(\mathbb D)$, $\|h\|_{H^2}^2$  is equivalent to $|h(0)|^2+\int_\mathbb{D}|h'(z)|^2(1-|z|^2)dA(z)$, it follows that 
\begin{equation}\label{eqnH^2}
       \int_\mathbb{D}|g'(z)|^2(1-|z|^2)dA(z)  \lesssim  \|g\|_{H^2}^2.
\end{equation}
Also, since $H^2$ is contained in the weighted Bergman space $L^2(\mathbb D, \nu_{\epsilon})\cap \mathcal O(\mathbb D)$, where $d\nu_{\epsilon}(z)=(1-|z|^2)^{\epsilon-1}dA(z)$, we have

\begin{equation}\label{eqnA^2}
    \int_{\mathbb{D}}|f'(z)|^2|g(z)|^2 dA(z))\lesssim\int_\mathbb D|g(z)|^2(1-|z|^2)^{\epsilon-1}dA(z)\lesssim\|g\|_{H^2}^2.
\end{equation}
Using \eqref{eqnH^2} and \eqref{eqnA^2} in \eqref{eqnD(mf)}, we have 
\begin{equation*}
    D_{m_f}(g)\lesssim\|g\|_{H^2}^2+D(fg)<\infty.
\end{equation*} 
Hence $g\in D(m_f)$ establishing the reverse inclusion.

For the second part, it is easily verified that $D(m_f)$ is a reproducing kernel Hilbert space with respect to the norm $\|\cdot\|_{N}$ given by
\begin{equation*}
    \|g\|^2_{N}=\|g\|_{H^2}^2+D(fg),~g\in D(m_f).
\end{equation*}
Hence, from \cite[Theorem 5.1]{paulsen2016introduction}, the norms $\|\cdot\|_{m_f}$ and $\|\cdot\|_N$ on $D(m_f)$ must be equivalent. This yields \eqref{eqnequivalentnorm}.
    \end{proof}

\begin{prop}\label{wienermult1}
Let $\epsilon_1,\epsilon_2>0$ and $f_1\in \mathcal A_{\epsilon_1}$ be such that $f_1=f_2h$, where $f_2\in \mathcal A_{\epsilon_2}$ and $h\in H^\infty.$ Then the following statements hold: 
\begin{enumerate}
    \item[\rm(i)] if $\varphi\in H^\infty$, then $\varphi\in \mbox{Mult} (D(m_{f_1}))$ if and only if $h\varphi\in  \mbox{Mult} (D(m_{f_1}),D(m_{f_2})),$
    \item[\rm(ii)] $\mbox{Mult}(D(m_{f_2}))\subseteq \mbox{Mult}(D(m_{f_1}))$.
\end{enumerate}
\end{prop}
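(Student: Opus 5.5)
The plan is to use Theorem \ref{lemma4.4} to replace the $D(m_{f_i})$-norms by the equivalent norms $\|g\|_{H^2}^2+D(f_ig)$. This turns membership in these spaces into statements about $H^2$ and $D$.

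For (i), fix $\varphi\in H^\infty$. Suppose first that $\varphi\in\mbox{Mult}(D(m_{f_1}))$, and let $g\in D(m_{f_1})$. Then $\varphi g\in D(m_{f_1})$. By Theorem \ref{lemma4.4} this means $f_1\varphi g=f_2(h\varphi g)\in D$. Since $h\varphi g\in H^2$ (because $h,\varphi\in H^\infty$), Theorem \ref{lemma4.4} applied to $f_2\in\mathcal A_{\epsilon_2}$ gives $h\varphi g\in D(m_{f_2})$. Hence $h\varphi\in\mbox{Mult}(D(m_{f_1}),D(m_{f_2}))$.

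Conversely, assume $h\varphi g\in D(m_{f_2})$ for every $g\in D(m_{f_1})$. By Theorem \ref{lemma4.4}, $f_2h\varphi g=f_1\varphi g\in D$. Also $\varphi g\in H^2$, so the same theorem gives $\varphi g\in D(m_{f_1})$. Thus $\varphi$ is a multiplier.

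The only point to check is the convention $\mbox{Mult}$ = algebraic multipliers, as defined in the paper. With that convention no boundedness statement is needed, although the closed graph theorem would supply one anyway. The main (mild) obstacle is just ensuring all functions involved lie in $H^2$, which holds because $h,\varphi\in H^\infty$.

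For (ii), let $\varphi\in\mbox{Mult}(D(m_{f_2}))$. Recall that $\mbox{Mult}(D(\mu))\subseteq H^\infty$, so $\varphi\in H^\infty$. By (i) it suffices to show $h\varphi\in\mbox{Mult}(D(m_{f_1}),D(m_{f_2}))$. Take $g\in D(m_{f_1})$. Then $f_2(hg)=f_1g\in D$ and $hg\in H^2$, so by Theorem \ref{lemma4.4} we get $hg\in D(m_{f_2})$. Since $\varphi$ multiplies $D(m_{f_2})$, it follows that $\varphi hg\in D(m_{f_2})$. Hence $h\varphi$ maps $D(m_{f_1})$ into $D(m_{f_2})$, and (i) gives $\varphi\in\mbox{Mult}(D(m_{f_1}))$.

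In short, everything reduces to the characterization of Theorem \ref{lemma4.4} together with the identity $f_1=f_2h$.
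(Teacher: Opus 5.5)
Your proposal is correct and follows essentially the same route as the paper: both parts reduce to the characterization $D(m_f)=\{g\in H^2: fg\in D\}$ from Theorem \ref{lemma4.4} combined with the identity $f_1=f_2h$. The only cosmetic difference is in (ii). You pass through (i) and explicitly invoke $\mbox{Mult}(D(\mu))\subseteq H^\infty$. The paper instead argues directly, and uses that same fact only implicitly to get $\varphi g\in H^2$.
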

\begin{proof}
(i) Suppose $\varphi\in \mbox{Mult}(D(m_{f_1}))$ and $g\in D(m_{f_1}).$ Then $\varphi g\in D(m_{f_1}).$ By Theorem \ref{lemma4.4}, we have $\varphi g \in H^2$ and $f_1 \varphi g\in D.$ Since $h\in H^\infty$ and $f_1=f_2h$, we have $h\varphi g\in H^2$ and $f_2h\varphi g\in D.$  Again applying Theorem \ref{lemma4.4}, we have $h\varphi g\in D(m_{f_2}).$ Hence $h\varphi\in \mbox{Mult}(D(m_{f_1}), D(m_{f_2})).$ The converse implication follows by using similar arguments and the assumption that $\varphi\in H^\infty$. 
 
   (ii)  Suppose $\varphi\in \mbox{Mult}(D(m_{f_2}))$ and $g\in D(m_{f_1}).$ By Theorem \ref{lemma4.4},  $hg\in D(m_{f_2}).$ Thus $\varphi h g\in D(m_{f_2})$ which implies $\varphi f_1g\in D.$ Another application of Theorem \ref{lemma4.4} gives $\varphi g\in D(m_{f_1}).$ This establishes $\mbox{Mult}(D(m_{f_2}))\subseteq \mbox{Mult}(D(m_{f_1}))$. 
    \end{proof}

\begin{cor}\label{containment of Mult(D)}
Let $\epsilon>0$. If  $f\in \mathcal{A}_\epsilon$, then $$f(\mbox{Mult} (D(m_f)))\subseteq \mbox{Mult} (D)\subseteq \mbox{Mult}(D(m_f)).$$
    
\end{cor}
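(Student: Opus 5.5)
The plan is to derive both inclusions from Proposition \ref{wienermult1} and Theorem \ref{lemma4.4}, applied with suitable choices of $f_1,f_2,h$.

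\emph{Second inclusion} $\mbox{Mult}(D)\subseteq \mbox{Mult}(D(m_f))$. Apply Proposition \ref{wienermult1}(ii) with $f_1=f$, $f_2=1$ and $h=f$. The constant function $1$ lies in every $\mathcal A_{\epsilon_2}$, since its derivative vanishes. Also $f\in\mathcal A_\epsilon\subseteq A(\mathbb D)\subseteq H^\infty$. Since $m_1=m$, we have $D(m_{f_2})=D$, and (ii) gives $\mbox{Mult}(D)\subseteq\mbox{Mult}(D(m_f))$.

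\emph{First inclusion} $f\,\mbox{Mult}(D(m_f))\subseteq\mbox{Mult}(D)$. Let $\varphi\in\mbox{Mult}(D(m_f))$. Recall from the introduction that $\mbox{Mult}(D(\mu))\subseteq H^\infty$, so $\varphi\in H^\infty$. Proposition \ref{wienermult1}(i), with the same choices $f_1=f$, $f_2=1$, $h=f$, gives $f\varphi\in\mbox{Mult}(D(m_f),D)$. By Lemma \ref{richterlipschitz}, $D\subseteq D(m_f)$. Hence for every $g\in D$ we get $g\in D(m_f)$, and therefore $f\varphi g\in D$. This shows $f\varphi\in\mbox{Mult}(D)$.

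As an alternative for the first inclusion, one can argue directly: for $g\in D\subseteq D(m_f)$ we have $\varphi g\in D(m_f)$, and Theorem \ref{lemma4.4} then gives $f\varphi g\in D$.

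There is no real obstacle here. The only points to check are that the constant $1$ belongs to $\mathcal A_{\epsilon_2}$, so that Proposition \ref{wienermult1} applies, and that multipliers of $D(m_f)$ are bounded, which is needed for the $\varphi\in H^\infty$ hypothesis in (i).
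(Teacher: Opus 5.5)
Your proposal is correct and is essentially the paper's proof: it also obtains the first inclusion from Proposition \ref{wienermult1}(i) together with $D\subseteq D(m_f)$ from Lemma \ref{richterlipschitz}, and the second from Proposition \ref{wienermult1}(ii). You make explicit the choice $f_1=f$, $f_2=1$, $h=f$ and the checks $1\in\mathcal A_{\epsilon_2}$, $D(m_1)=D$ and $\varphi\in H^\infty$, which the paper leaves implicit.
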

\begin{proof}
    Let $\varphi\in \mbox{Mult} (D(m_f))$. By part (1) of Proposition \ref{wienermult1}, we get $f\varphi\in  \mbox{Mult} (D(m_f),D)$. Also by Lemma \ref{richterlipschitz},  $D\subseteq D(m_f)$. Hence it follows that
    $f\varphi\in \mbox{Mult} (D)$, and consequently, $f(\mbox{Mult} (D(m_f)))\subseteq \mbox{Mult} (D).$ Now $ \mbox{Mult} (D)\subseteq \mbox{Mult}(D(m_f))$ is immediate from part (2) of Proposition \ref{wienermult1}.
\end{proof}
%%%%%%%%%%%%%%%%%%%%%%%%%%%%%%%%%%%%%%%%%%%%%%%%%%%%%%%%%%%%%%%%%%%%%%%%%%%%%%%%%%%%%%%%%%%%%%%%%%%%%%%%%%%%%%%%%%%%%%%%%
\section{Inner multipliers of  \texorpdfstring{$D(m_f)$}{D(mf)}}\label{section inner multiplier}
In this section, we study inner functions which are in the multiplier algebra of $D(m_f)$. We first establish several lemmas and propositions that will be used in the proof of Theorems \ref{singmultiff} and \ref{singnotinmult}.
The second part of the following lemma is recorded in \cite[Proposition 5.4]{richter1992multipliers} while the first part follows from the proof of that proposition. Although the third part can be established by an analogous argument, we include a proof for the sake of completeness.

\begin{lem}\label{prop 5.4}
Let $f\in H^2$. Let $\sigma\in M_+(\mathbb T)$ be a  measure singular with respect to $m$. Then the following holds:
\begin{itemize}
    \item[(i)] If $f(e^{it})=0$   for $\sigma$ almost every $e^{it}\in\mathbb T,$
    then
    $$D(S_{\sigma}f)=D(f)+2D_{\sigma}(f).$$
    \item[(ii)] $S_{\sigma }f\in D$ if and only if $f\in D\cap D(\sigma)$ and $f(e^{it})=0$   for $\sigma$ almost every $e^{it}\in\mathbb T.$
    \item [(iii)] $S_\sigma \in D(m_f)$ if and only if $f\in D(\sigma)$ and $f(e^{it})=0$   for $\sigma$ almost every $e^{it}\in\mathbb T.$ Moreover, in this case, $D_{m_f}(S_{\sigma})=2D_{\sigma}(f)$.
\end{itemize}
\end{lem}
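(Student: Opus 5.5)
The plan is to prove (iii) directly from the Richter--Sundberg formula (Theorem \ref{carlrep}) together with the local Douglas formula (Lemma \ref{local Douglas}). The idea is to interchange the roles of $f$ and $S_\sigma$ by integrating a local Dirichlet integral against $\sigma$ instead of against $m_f$. Write
\[
D_{m_f}(S_\sigma)=\int_{\mathbb T} D_\zeta(S_\sigma)|f(\zeta)|^2\,dm(\zeta).
\]
By Theorem \ref{carlrep}(iii),
\[
D_\zeta(S_\sigma)=\int_{\mathbb T}\frac{2}{|e^{it}-\zeta|^2}\,d\sigma(e^{it}).
\]
Since everything is nonnegative, Tonelli's theorem gives
\[
D_{m_f}(S_\sigma)=2\int_{\mathbb T}\left(\int_{\mathbb T}\frac{|f(\zeta)|^2}{|\zeta-e^{it}|^2}\,dm(\zeta)\right)d\sigma(e^{it}).
\]
This identity holds in $[0,\infty]$ and requires no hypotheses.

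Next I would identify the inner integral, for a fixed $\eta=e^{it}$, with $D_\eta(f)$ whenever $f(\eta)=0$. Set $I(\eta):=\int_{\mathbb T}|f(\zeta)|^2|\zeta-\eta|^{-2}\,dm(\zeta)$. If $I(\eta)<\infty$, then Lemma \ref{local Douglas}(i) with $\alpha=0$ shows that $f(\eta)$ exists and equals $0$, and then part (ii) gives $D_\eta(f)=I(\eta)$. Conversely, if $f(\eta)=0$, then part (ii) directly gives $I(\eta)=D_\eta(f)$. Thus in all cases
\[
I(\eta)=\begin{cases} D_\eta(f), & f(\eta)=0,\\ \infty, & \text{otherwise}.\end{cases}
\]
Here "otherwise" covers both the case where $f(\eta)$ does not exist and the case where $f(\eta)\neq0$, since in either case finiteness of $I(\eta)$ would force $f(\eta)=0$.

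For the forward direction, suppose $S_\sigma\in D(m_f)$. Then $\int I\,d\sigma<\infty$, so $I<\infty$ $\sigma$-a.e. Hence $f(\eta)=0$ for $\sigma$-a.e. $\eta$, and
\[
D_\sigma(f)=\int D_\eta(f)\,d\sigma(\eta)=\int I\,d\sigma<\infty,
\]
so $f\in D(\sigma)$. For the converse, if $f=0$ $\sigma$-a.e. and $f\in D(\sigma)$, then $I=D_\eta(f)$ $\sigma$-a.e. Therefore $D_{m_f}(S_\sigma)=2D_\sigma(f)<\infty$, and since $S_\sigma\in H^2$ trivially, $S_\sigma\in D(m_f)$. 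The formula $D_{m_f}(S_\sigma)=2D_\sigma(f)$ comes out of the same computation.

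The only delicate point is the pointwise dichotomy for $I(\eta)$, in particular measurability and the convention that $I=\infty$ when the radial limit fails to exist or is nonzero. Lemma \ref{local Douglas}(i)--(ii) settles the dichotomy. Measurability is automatic, since $I$ is defined by a Tonelli integral of a nonnegative jointly measurable function.
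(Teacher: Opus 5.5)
Your argument for (iii) is correct and is essentially the paper's own proof: Theorem \ref{carlrep}(iii) plus Fubini--Tonelli gives the double-integral formula for $D_{m_f}(S_\sigma)$, and Lemma \ref{local Douglas}(i)--(ii) identifies the inner integral with $D_{e^{it}}(f)$ exactly when $f(e^{it})=0$. Your pointwise dichotomy for the inner integral just spells out what the paper leaves as ``the converse follows in a similar manner.'' Like the paper, you do not prove (i) and (ii), which the paper takes from Richter--Sundberg; they also follow quickly from your computation, since integrating Theorem \ref{carlrep}(i) against $m$ gives $D(S_\sigma f)=D(f)+D_{m_f}(S_\sigma)$.
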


\begin{proof} 
As already discussed, we provide a proof of part \rm(iii) only. By Theorem  \ref{carlrep}\rm(iii) and Fubini's theorem, we obtain
\begin{align}
    D_{m_f}(S_\sigma)&=\int_{\mathbb{T}}\int_{\mathbb{T}}\frac{2}{|e^{it}-\zeta|^2}|f(\zeta)|^2dm(\zeta)d\sigma(e^{it}).\label{D(mf)sing}
      \end{align}
    Suppose $S_\sigma \in D(m_f).$ Then by  \eqref{D(mf)sing}, we have $\int_{\mathbb{T}}\frac{|f(\zeta)|^2}{|e^{it}-\zeta|^2}dm(\zeta)$ is finite for $\sigma$ almost every $e^{it}\in\mathbb T.$ This together with Lemma \ref{local Douglas}      
    yields $f(e^{it})=0$  and  $D_{e^{it}}(f)=\int_{\mathbb{T}}\frac{|f(\zeta)|^2}{|e^{it}-\zeta|^2}dm(\zeta)$ for $\sigma$ almost every $e^{it}\in\mathbb T.$
    By another application of \eqref{D(mf)sing}, we get that $f\in D(\sigma).$ 
    The converse follows in a similar manner. Moreover, $D_{m_f}(S_{\sigma})=2D_{\sigma}(f)$ is also immediate from \eqref{D(mf)sing}.
   \end{proof}

\begin{rem}\label{suppsigmainzeroes}
Suppose $f\in A(\mathbb D)$ and  $\mu\in M_{+}(\mathbb T)$. Then, it can be shown using elementary arguments that  the condition $f(\zeta)=0$ for $\mu$ almost every $\zeta\in\mathbb T$ is equivalent to the condition that $\mbox{supp}(\mu)\subseteq Z_{\mathbb T}(f).$ 
\end{rem}
 
\begin{lem}\label{richterfuncmult}
     Let $\epsilon>0$ and $\mu\in M_{+}(\mathbb T).$ Then $\mathcal{A}_\epsilon \subseteq D(\mu).$
\end{lem}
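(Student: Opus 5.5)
The plan is to show that $D_\mu(g)<\infty$ for every $g\in\mathcal A_\epsilon$ by controlling the local Dirichlet integrals $D_\zeta(g)$ uniformly in $\zeta\in\mathbb T$, and then integrating against $\mu$. We may assume $0<\epsilon\leqslant 1$, since for $\epsilon>1$ the class $\mathcal A_\epsilon$ consists only of constants, and for $\epsilon\geqslant 1$ we have $\mathcal A_\epsilon\subseteq\mathcal A_{\epsilon'}$ for any smaller $\epsilon'$. The first step uses the identity recorded after Theorem \ref{carlrep}:
$$D_\mu(g)=\int_{\mathbb T}D_\zeta(g)\,d\mu(\zeta).$$
Since $\mu$ is a finite measure, it suffices to prove that $\sup_{\zeta\in\mathbb T}D_\zeta(g)<\infty$.

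To bound $D_\zeta(g)$, we invoke Lemma \ref{RC_Lip_equiv}, which gives $g\in\Lambda^a_\beta$ with $\beta=\frac{1+\epsilon}{2}\in(\frac12,1]$. In particular $g\in A(\mathbb D)$, so the radial limit $g(\zeta)$ exists at every $\zeta\in\mathbb T$. The local Douglas formula, Lemma \ref{local Douglas}(ii), then gives
$$D_\zeta(g)=\int_{\mathbb T}\frac{|g(\eta)-g(\zeta)|^2}{|\eta-\zeta|^2}\,dm(\eta)\lesssim\int_{\mathbb T}\frac{dm(\eta)}{|\eta-\zeta|^{2-2\beta}}.$$

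The final step is to check that the last integral is finite and independent of $\zeta$. By rotation invariance it is the same for all $\zeta$, and since $2-2\beta=1-\epsilon<1$ the singularity at $\eta=\zeta$ is integrable on the circle. This is exactly the fact from \cite[Theorem 1.7]{zhu} already used in the proof of Lemma \ref{Wiener algebra}. Hence $D_\mu(g)\lesssim\mu(\mathbb T)<\infty$, so $g\in D(\mu)$. The case $\mu=0$ is trivial, because then $D(\mu)=H^2$ and $\mathcal A_\epsilon\subseteq A(\mathbb D)\subseteq H^2$.

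No step here is a serious obstacle. The only point requiring care is justifying the integral formula for $D_\mu$ in terms of $D_\zeta$. This follows from Fubini's theorem together with the identity $\varphi_\mu(z)=\int_{\mathbb T}P(z,\zeta)\,d\mu(\zeta)$ and the definition $D_\zeta(g)=D_{\delta_\zeta}(g)=\int_{\mathbb D}|g'(z)|^2P(z,\zeta)\,dA(z)$.
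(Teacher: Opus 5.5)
Your proof is correct, but it takes a different route from the paper. The paper treats $\epsilon\geqslant 1$ as trivial. For $0<\epsilon<1$ it simply cites \cite[Theorem 3.1]{BaoPou}, which gives $\Lambda^a_{\frac{1+\epsilon}{2}}\subseteq D(\mu)$ for every $\mu\in M_+(\mathbb T)$, and combines this with Lemma \ref{RC_Lip_equiv}.

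You instead prove the needed inclusion directly. You write $D_\mu(g)=\int_{\mathbb T}D_\zeta(g)\,d\mu(\zeta)$ and apply the local Douglas formula together with the uniform H\"older estimate $|g(\eta)-g(\zeta)|\lesssim|\eta-\zeta|^{\beta}$. This gives
\[
\sup_{\zeta\in\mathbb T}D_\zeta(g)\lesssim\int_{\mathbb T}|\eta-\zeta|^{-(1-\epsilon)}\,dm(\eta)<\infty .
\]
This is essentially the computation from the proof of Lemma \ref{Wiener algebra}, evaluated at boundary points rather than at $z\in\mathbb D$. One small imprecision: \cite[Theorem 1.7]{zhu} concerns interior points. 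At a boundary point the bound follows either by direct computation, since $|\eta-\zeta|\asymp|t|$ for $\eta=\zeta e^{it}$ with $|t|\leqslant\pi$, or by Fatou's lemma.

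Your argument is self-contained and uses only tools already stated in the paper. It handles $\epsilon=1$ together with $0<\epsilon<1$ (the integral is then $1$), so no case split is needed. It also gives a quantitative bound $D_\mu(g)\lesssim\mu(\mathbb T)$, with constant depending only on $\epsilon$ and the Lipschitz constant of $g$. The paper's route is shorter, but it outsources the key estimate to an external theorem.
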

\begin{proof}
If $\epsilon\geqslant1$, then it is trivial. If  $0<\epsilon<1$, then by \cite[Theorem 3.1]{BaoPou}, $\Lambda^a_{\frac{1+\epsilon}{2}}\subseteq D(\mu)$. An application of Lemma \ref{RC_Lip_equiv} completes the proof.
\end{proof}

\begin{lem}\label{radiallimofLip}
    Let $0<\epsilon\leqslant1,~\zeta\in\mathbb T$ and $f\in  \mathcal A_{\epsilon}$ be such that $f(\zeta)=0.$  Then for any $g\in H^2$, $(fg)(\zeta)=0.$
\end{lem}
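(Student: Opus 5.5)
Since $g\in H^2$ need not have a radial limit at $\zeta$, the claim is that $\lim_{r\to1^-}f(r\zeta)g(r\zeta)=0$. The plan is to combine the Lipschitz decay of $f$ at $\zeta$ with the standard growth bound for $H^2$ functions.

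First, by Lemma \ref{RC_Lip_equiv}, $f\in\Lambda^a_\beta$ with $\beta=\frac{1+\epsilon}{2}\in(\frac12,1]$. Since $f(\zeta)=0$ and $f$ extends continuously to $\overline{\mathbb D}$, this gives
$$|f(r\zeta)|=|f(r\zeta)-f(\zeta)|\lesssim (1-r)^{\beta},\qquad 0\leqslant r<1.$$

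Second, I would use the pointwise estimate coming from the reproducing kernel of $H^2$:
$$|g(z)|\leqslant \frac{\|g\|_{H^2}}{(1-|z|^2)^{1/2}}.$$
This alone gives only $|f(r\zeta)g(r\zeta)|\lesssim (1-r)^{\beta-1/2}\|g\|_{H^2}$. That bound still tends to $0$, because $\beta>\frac12$ strictly. This strict inequality is exactly where the hypothesis $\epsilon>0$ is used.

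So the argument is just these two estimates multiplied together, and $(fg)(\zeta)=\lim_{r\to1^-}f(r\zeta)g(r\zeta)=0$. I do not expect a serious obstacle. The one point to check is that the Hardy–Littlewood lemma yields the Lipschitz bound up to the boundary point $\zeta$ itself. This holds since $\Lambda^a_\beta$ is defined with $z,w\in\overline{\mathbb D}$, and $f(\zeta)$, the radial limit, coincides with the continuous extension value.

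Alternatively, to avoid relying on the exponent margin, one can sharpen the kernel bound to $|g(z)|=o((1-|z|)^{-1/2})$, by approximating $g$ by polynomials in $H^2$. Then even $\beta=\frac12$ would suffice, but this refinement is unnecessary here.
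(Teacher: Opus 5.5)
Your proposal is correct and follows essentially the same route as the paper: the Lipschitz bound $|f(r\zeta)|\lesssim(1-r)^{\beta}$ with $\beta=\frac{1+\epsilon}{2}>\frac12$, multiplied by a growth estimate for $H^2$ functions along the radius. The only difference is minor. The paper quotes the little-$o$ estimate $(1-r^2)^{1/2}|g(r\zeta)|\to 0$, which is your ``alternative''. Your main argument instead uses the cruder reproducing-kernel bound and lets the strict inequality $\beta>\frac12$ do the work, which is equally valid and slightly more elementary.
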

\begin{proof}
 Since $f\in\mathcal A_\epsilon,$ Lemma \ref{RC_Lip_equiv} implies that $f\in \Lambda^a_{\beta}$ where $\beta=\frac{1+\epsilon}{2}\in(\frac{1}{2},1].$ This together with $f(\zeta)=0$ implies $|f(z)|\lesssim |z-\zeta|^\beta,~\text{for all}~ z\in\mathbb D.$ Let $g\in H^2.$ Thus we have 
\begin{equation}\label{fgrad}
    |(fg)(r\zeta)|\lesssim(1-r)^\beta|g(r\zeta)|\lesssim(1-r^2)^{\beta}|g(r\zeta)|,~ 0\leqslant r<1.
\end{equation}
Since $g\in H^2$, by \cite[Lemma 1.5.4]{el2014primer}, we have  
\begin{equation}\label{primerrad}
    \lim_{r\to1^-}(1-r^2)^{\frac{1}{2}}|g(r\zeta)|=0.
\end{equation}
Since $\beta>\frac{1}{2}$,   \eqref{fgrad} and \eqref{primerrad} imply
\begin{equation*}
    (fg)(\zeta)=\lim_{r\to 1^-}(fg)(r\zeta)=0,
\end{equation*}
which completes the proof of this lemma.
\end{proof}
\begin{prop}\label{sing mult iff condn for RC}
Let $\epsilon>0$ and $f\in\mathcal A_{\epsilon}$.  Let $\sigma\in M_{+}(\mathbb T)$ be a measure singular with respect to $m$.  
Then $S_\sigma\in \mbox{Mult}(D(m_f))$ if and only if $S_\sigma\in D(m_f)$ and  $M_f:D(m_f)\to D(\sigma)$ is a bounded linear operator.   
\end{prop}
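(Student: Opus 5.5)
The plan is to derive the identity
\begin{equation*}
D_{m_f}(S_\sigma g)=D_{m_f}(g)+2D_{\sigma}(fg),\qquad g\in D(m_f),
\end{equation*}
valid whenever $f(e^{it})=0$ for $\sigma$-a.e.\ $e^{it}$. Both directions of the proposition then follow from it together with a closed graph argument. Without loss of generality I take $0<\epsilon\leqslant 1$, so that Lemma \ref{radiallimofLip} applies.

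\textbf{Step 1: reduce to $f=0$ $\sigma$-a.e.} If $S_\sigma\in \mbox{Mult}(D(m_f))$, then $S_\sigma=S_\sigma\cdot 1\in D(m_f)$. So in both directions we may assume $S_\sigma\in D(m_f)$. By Lemma \ref{prop 5.4}(iii), this gives $f\in D(\sigma)$ and $f(e^{it})=0$ for $\sigma$-a.e.\ $e^{it}\in\mathbb T$.

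\textbf{Step 2: the identity.} Fix $g\in D(m_f)\subseteq H^2$. Write $D_{m_f}(S_\sigma g)=\int_{\mathbb T}D_\zeta(S_\sigma g)|f(\zeta)|^2\,dm(\zeta)$ and apply Theorem \ref{carlrep}(i) pointwise in $\zeta$ ($m$-a.e.\ $g(\zeta)$ exists). This gives
\begin{equation*}
D_{m_f}(S_\sigma g)=D_{m_f}(g)+\int_{\mathbb T}D_\zeta(S_\sigma)\,|f(\zeta)g(\zeta)|^2\,dm(\zeta).
\end{equation*}
Insert the formula $D_\zeta(S_\sigma)=\int_{\mathbb T}\frac{2}{|e^{it}-\zeta|^2}\,d\sigma(e^{it})$ from Theorem \ref{carlrep}(iii) and use Fubini (all terms are nonnegative). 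The second term becomes
\begin{equation*}
2\int_{\mathbb T}\int_{\mathbb T}\frac{|(fg)(\zeta)|^2}{|e^{it}-\zeta|^2}\,dm(\zeta)\,d\sigma(e^{it}).
\end{equation*}
Now apply Lemma \ref{radiallimofLip} to $fg$ with $g\in H^2$: for $\sigma$-a.e.\ $e^{it}$ we have $(fg)(e^{it})=0$. The local Douglas formula, Lemma \ref{local Douglas}(ii), then identifies the inner integral with $D_{e^{it}}(fg)$. Integrating in $d\sigma$ yields $2D_\sigma(fg)$, which proves the identity.

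\textbf{Step 3: conclude.} Since $fg\in H^2$ always, the identity gives $S_\sigma g\in D(m_f)$ if and only if $fg\in D(\sigma)$.
\begin{itemize}
\item If $S_\sigma$ is a multiplier, then $M_f$ maps $D(m_f)$ into $D(\sigma)$. It is bounded by the closed graph theorem: both spaces are reproducing kernel Hilbert spaces, so point evaluations are continuous and the graph is closed.
\item Conversely, if $M_f:D(m_f)\to D(\sigma)$ is bounded and $S_\sigma\in D(m_f)$, then Step 1 applies. The identity then gives $S_\sigma g\in D(m_f)$ for every $g$, so $S_\sigma\in\mbox{Mult}(D(m_f))$.
\end{itemize}

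\textbf{Main obstacle.} The delicate point is Step 2: we need $(fg)(e^{it})=0$ $\sigma$-a.e.\ for arbitrary $g\in H^2$, so that local Douglas gives exactly $D_{e^{it}}(fg)$ rather than $\infty$ or an integral with a nonzero constant subtracted. This is exactly where the Lipschitz regularity $f\in\mathcal A_\epsilon$ is used, through Lemma \ref{radiallimofLip}. Care is also needed with the convention in Theorem \ref{carlrep}(i) at points where $f(\zeta)g(\zeta)=0$. Only $m$-a.e.\ $\zeta$ matter in the outer integral, and there the formula applies directly.
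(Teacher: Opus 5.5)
Your proof is correct. It differs from the paper's in where the computation takes place. The paper applies Lemma \ref{prop 5.4}(i) to $h=fg$, which vanishes $\sigma$-a.e.\ by Lemma \ref{radiallimofLip}, to get the Dirichlet-space identity $D(S_\sigma fg)=D(fg)+2D_\sigma(fg)$. It then moves between $D$ and $D(m_f)$ in both directions using the norm equivalence $\|g\|_{m_f}^2\asymp\|g\|_{H^2}^2+D(fg)$ of Theorem \ref{lemma4.4}. You instead integrate the Richter--Sundberg formula against $dm_f$ and obtain the exact identity $\|S_\sigma g\|_{m_f}^2=\|g\|_{m_f}^2+2D_\sigma(fg)$ directly in $D(m_f)$. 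The two routes compute the same quantity, since $\int_{\mathbb T}D_\zeta(S_\sigma)|f(\zeta)g(\zeta)|^2\,dm(\zeta)$ equals both $D(S_\sigma fg)-D(fg)$ and $D_{m_f}(S_\sigma g)-D_{m_f}(g)$.

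Your route has several advantages:
\begin{itemize}
\item It does not need Theorem \ref{lemma4.4}, and hence neither Lemma \ref{Wiener algebra} nor the weighted Bergman embedding.
\item It gives an equality rather than two-sided estimates.
\item It shows that for the converse the mere inclusion $fD(m_f)\subseteq D(\sigma)$ already suffices.
\item It isolates exactly where $f\in\mathcal A_\epsilon$ enters: only through boundedness of $f$ and Lemma \ref{radiallimofLip}.
\end{itemize}
The paper's route is shorter given the machinery already in place. It also fits the description $D(m_f)=\{g\in H^2: fg\in D\}$ that organizes the rest of that section.

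You also make the closed graph step explicit, for $M_f$. The paper invokes the same principle tacitly, for $M_{S_\sigma}$, when it writes $\|S_\sigma g\|_{m_f}\lesssim\|g\|_{m_f}$.
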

\begin{proof}
Let  $g\in D(m_f).$ 
   Suppose $S_\sigma\in D(m_f).$ Then Lemma \ref{D(mf)sing} implies $f(\zeta)=0$  for $\sigma$ almost every $\zeta\in\mathbb T$,
    and thus by Lemma \ref{radiallimofLip}, we have $(fg)(\zeta)=0$  for $\sigma$ almost every $\zeta\in\mathbb T.$  This, combined with Lemma \ref{prop 5.4}(i) yields
    \begin{align}
    D(S_\sigma fg)=D(fg)+2D_\sigma(fg).\label{relation D with Dsigma via singmult}
    \end{align}
     For the forward implication, suppose  $S_\sigma\in\mbox{Mult}(D(m_f))$. Then $S_\sigma\in D(m_f)$ and we have $\|S_\sigma g\|_{m_f}\lesssim\|g\|_{m_f},$  for $g\in D(m_f)$. Hence, it follows from Theorem \ref{lemma4.4} that $D(fS_\sigma g)\lesssim \|g\|^2_{m_f}$. In view of \eqref{relation D with Dsigma via singmult}, we obtain $D_\sigma(fg)\lesssim \|g\|^2_{m_f}$ and consequently, 
    \begin{equation}\label{Sigma m_f}
        \|fg\|_\sigma^2=\|fg\|_{H^2}^2+D_{\sigma}(fg)\lesssim\|g\|_{m_f}.
    \end{equation}
    Hence  $M_f:D(m_f)\to D(\sigma)$ is  bounded.

    Conversely, suppose $S_\sigma\in D(m_f)$ and $M_f:D(m_f)\to D(\sigma)$ is  bounded. Thus  \eqref {Sigma m_f} holds $g\in D(m_f)$. Combining this with \eqref{relation D with Dsigma via singmult} and Theorem \ref{lemma4.4}, we obtain
    \begin{align*}
       \|S_\sigma g\|^2_{m_f}&\lesssim \|S_\sigma g\|^2_{H^2}+ D(fS_\sigma g)\\
       &=\|g\|^2_{H^2}+D(fg)+2D_\sigma(fg)\\
       &\lesssim \|g\|^2_{m_f}+\|fg\|^2_\sigma\\
       &\lesssim \|g\|^2_{m_f}.
    \end{align*}
This completes the proof.
\end{proof}
\begin{cor}\label{RCcor}
    Let $0<\epsilon\leqslant 1,~c>0,~\zeta\in\mathbb T$ and $f\in \mathcal{A_\epsilon}$. Then $S_{c\delta_\zeta}\in\mbox{Mult}(D(m_f))$  if and only if $\frac{f}{(z-\zeta)}\in\mbox{Mult}(D(m_f), H^2).$ 
\end{cor}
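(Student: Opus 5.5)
We derive the corollary from Proposition \ref{sing mult iff condn for RC} with $\sigma=c\delta_\zeta$. That proposition says $S_{c\delta_\zeta}\in\mbox{Mult}(D(m_f))$ if and only if two things hold: $S_{c\delta_\zeta}\in D(m_f)$, and $M_f:D(m_f)\to D(c\delta_\zeta)$ is bounded. We translate each condition into a statement about $f/(z-\zeta)$.

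\emph{Step 1: the membership condition.} By Lemma \ref{prop 5.4}(iii), $S_{c\delta_\zeta}\in D(m_f)$ if and only if $f\in D(\delta_\zeta)$ and $f(\zeta)=0$. Since $f\in\mathcal A_\epsilon\subseteq A(\mathbb D)$ is Lipschitz of order $\beta=\frac{1+\epsilon}{2}$, the requirement $f\in D(\delta_\zeta)$ is automatic from Lemma \ref{richterfuncmult}. By Lemma \ref{local Douglas}, when $f(\zeta)=0$,
\begin{equation*}
D_\zeta(f)=\int_{\mathbb T}\frac{|f(\xi)|^2}{|\xi-\zeta|^2}\,dm(\xi)=\Big\|\frac{f}{z-\zeta}\Big\|_{H^2}^2 .
\end{equation*}
Combining these, $S_{c\delta_\zeta}\in D(m_f)$ if and only if $f(\zeta)=0$, which in turn holds if and only if $f/(z-\zeta)\in H^2$. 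For the last equivalence, the forward direction follows from the displayed formula. For the reverse direction, if $f/(z-\zeta)=h\in H^2$, then $f=(z-\zeta)h$, and Lemma \ref{local Douglas}(iii) with (i) gives $f(\zeta)=0$.

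\emph{Step 2: the boundedness condition.} We show that $M_f:D(m_f)\to D(\delta_\zeta)$ is bounded if and only if $f/(z-\zeta)\in\mbox{Mult}(D(m_f),H^2)$, provided $f(\zeta)=0$. Take $g\in D(m_f)\subseteq H^2$. Lemma \ref{radiallimofLip} gives $(fg)(\zeta)=0$, so Lemma \ref{local Douglas}(ii) yields
\begin{equation*}
D_\zeta(fg)=\int_{\mathbb T}\frac{|f(\xi)g(\xi)|^2}{|\xi-\zeta|^2}\,dm(\xi)=\Big\|\frac{fg}{z-\zeta}\Big\|_{H^2}^2 .
\end{equation*}
Here we use that $fg/(z-\zeta)$ lies in $H^2$ whenever this integral is finite, by Lemma \ref{local Douglas}(iii). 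Also $\|fg\|_{H^2}\le\|f\|_\infty\|g\|_{H^2}$. Hence
\begin{equation*}
\|fg\|_{c\delta_\zeta}^2\asymp\|g\|_{m_f}^2+\Big\|\frac{fg}{z-\zeta}\Big\|_{H^2}^2 ,
\end{equation*}
where the constants depend on $c$, and we use $\|g\|_{H^2}\le\|g\|_{m_f}$. So boundedness of $M_f$ into $D(\delta_\zeta)$ is equivalent to $\frac{f}{z-\zeta}g\in H^2$ for all $g\in D(m_f)$, with a norm bound. By the closed graph theorem, the membership statement alone already implies the norm bound, since point evaluations are continuous on both spaces.

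\emph{Step 3: assembling the equivalence.} For the forward direction, if $S_{c\delta_\zeta}$ is a multiplier, Steps 1 and 2 give $f(\zeta)=0$ and then the multiplier property. For the converse, suppose $f/(z-\zeta)\in\mbox{Mult}(D(m_f),H^2)$. Since $1\in D(m_f)$, we get $f/(z-\zeta)\in H^2$, so Step 1 gives $f(\zeta)=0$ and $S_{c\delta_\zeta}\in D(m_f)$. Step 2 then gives boundedness, and Proposition \ref{sing mult iff condn for RC} concludes.

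The only delicate point is making sure Lemma \ref{local Douglas} applies to the product $fg$, which requires the radial limit $(fg)(\zeta)$ to exist and vanish. That is exactly what Lemma \ref{radiallimofLip} supplies, and it is where the hypothesis $\epsilon>0$ is used.
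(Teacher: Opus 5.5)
Your proof is correct and is essentially the paper's argument: you reduce to Proposition~\ref{sing mult iff condn for RC} with $\sigma=c\delta_\zeta$, get $(fg)(\zeta)=0$ from Lemma~\ref{radiallimofLip}, and use Lemma~\ref{local Douglas} to identify $fD(m_f)\subseteq D(c\delta_\zeta)$ with $fD(m_f)\subseteq (z-\zeta)H^2$, and your closed-graph remark makes explicit the boundedness step the paper leaves implicit. One slip needs repair: the two-sided estimate $\|fg\|_{c\delta_\zeta}^2\asymp\|g\|_{m_f}^2+\|fg/(z-\zeta)\|_{H^2}^2$ is false (for $f=z-\zeta$ and $g=z^n$ the left side equals $2+c$ while $\|z^n\|_{m_f}^2=1+2n$), but your conclusion only needs the valid one-sided bounds $\|fg\|_{c\delta_\zeta}^2\lesssim\|g\|_{m_f}^2+\|fg/(z-\zeta)\|_{H^2}^2$ and $c\,\|fg/(z-\zeta)\|_{H^2}^2\leqslant\|fg\|_{c\delta_\zeta}^2$.
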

    \begin{proof}
Suppose $S_{c\delta_\zeta}\in\mbox{Mult}(D(m_f)).$ Then  $S_{c\delta_\zeta}\in D(m_f)$. By Proposition \ref{sing mult iff condn for RC}, $M_f:D(m_f)\to D(c\delta_{\zeta})$ is bounded. Let $g\in D(m_f).$  Then $fg\in D(c\delta_{\zeta})$. Also, by Lemma \ref{prop 5.4} and Lemma \ref{radiallimofLip}, $(fg)(\zeta)=0.$ By Lemma \ref{local Douglas}, it follows that $fg\in (z-\zeta)H^2.$
    
Conversely, assume that $\frac{f}{(z-\zeta)}\in\mbox{Mult}(D(m_f), H^2).$ Then $\frac{f}{z-\zeta}\in H^2$ and thus by Lemma \ref{local Douglas}, $f(\zeta)=0$ and $f\in D(c\delta_{\zeta})$.
Now, by Lemma \ref{prop 5.4}, $S_{c\delta_{\zeta}}\in D(m_f)$. Also $$fD(m_f)=(z-\zeta)\frac{f}{z-\zeta}D(m_f) \subseteq  (z-\zeta)H^2\subseteq D(c\delta_\zeta).$$ Hence by Proposition \ref{sing mult iff condn for RC}, it follows that $S_{c\delta_\zeta}\in\mbox{Mult}(D(m_f)).$
    \end{proof}

We are now ready to  prove Theorem \ref{singmultiff}.

\begin{proof}[Proof of Theorem \ref{singmultiff}]
    \rm(i) $\implies$ \rm(ii) This is obvious.  
    
    \rm(ii) $\implies$ \rm (iii) This follows from Lemma \ref{prop 5.4} and Remark \ref{suppsigmainzeroes}.
    
\rm(iii) $\implies$ \rm(i) Suppose $\mbox{supp}(\sigma)\subseteq Z_{\mathbb T}(f)$. Then 
 by Lemma \ref{prop 5.4} and Lemma \ref{richterfuncmult}, we have $S_\sigma\in D(m_f).$  
   In view of Proposition \ref{sing mult iff condn for RC}, in order to prove that $S_\sigma\in\mbox{Mult}(D(m_f)),$ it suffices to verify that $M_f:D(m_f)\to D(\sigma)$ is  bounded. Let $g\in D(m_f).$ We will show that $fg\in D(\sigma).$  Let $\zeta\in\mathbb T$  be such that $f(\zeta)=0.$ Application  of Lemma \ref{radiallimofLip} yields $(fg)(\zeta)=0.$
    By Theorem \ref{local Douglas}, we have 
\begin{align*}\notag
    D_\zeta(fg)&=\int_\mathbb T \frac{|(fg)(\eta)|^2}{|\eta-\zeta|^2}dm(\eta).\\\notag
   % &=\|\frac{f(z)}{z-\zeta}g(z)\|^2_{H^2}.
    \end{align*}
Since $f\in\mathcal A_1$ and $f(\zeta)=0,$ by Lemma \ref{RC_Lip_equiv}, we have $f\in A(\mathbb D)$ and $|f(\eta)|\lesssim |\eta-\zeta|,~\eta\in\mathbb T$. Hence
\begin{equation*}
    D_\zeta(fg)\lesssim \|g\|^2_{H^2}.
\end{equation*}
Moreover, since $\mbox{supp}(\sigma)\subseteq Z_{\mathbb T}(f)$, we obtain
$$D_\zeta(fg)\lesssim \|g\|^2_{H^2}, \text{ for $\sigma$ almost every $\zeta\in\mathbb T$.}$$
Consequently, we have
$$D_\sigma(fg)\lesssim \|g\|^2_{H^2}\sigma(\mathbb T)<\infty.$$ This establishes the proof. 
    \end{proof}

The following corollary  implies that $(z-1)S_{r\delta_1}\in\mathrm{Mult}(D)$ for every $r>0$. This recovers a special case of \cite[Proposition 16]{brown1984cyclic}.
\begin{cor}
    Let $\sigma\in M_+(\mathbb T)$ be a measure singular with respect to $m$. Suppose $f$ is a function in $\mathcal{A}_1$  such that 
$\mbox{supp}(\sigma)\subseteq Z_{\mathbb T}(f).$ 
Then $fS_{\sigma}\in \mbox{Mult}(D).$
\end{cor}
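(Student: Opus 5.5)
The plan is to combine Theorem \ref{singmultiff} with Corollary \ref{containment of Mult(D)}. By hypothesis $f\in\mathcal A_1$ and $\mbox{supp}(\sigma)\subseteq Z_{\mathbb T}(f)$, so the implication (iii) $\implies$ (i) of Theorem \ref{singmultiff} gives $S_\sigma\in\mbox{Mult}(D(m_f))$. Since $f\in\mathcal A_1=\mathcal A_\epsilon$ with $\epsilon=1$, Corollary \ref{containment of Mult(D)} gives $f\,\mbox{Mult}(D(m_f))\subseteq\mbox{Mult}(D)$. Applying this to $\varphi=S_\sigma$ yields $fS_\sigma\in\mbox{Mult}(D)$.

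It is worth recording how the corollary is used here, in case one wants to check it directly. By Proposition \ref{wienermult1}(i), with $f_1=f_2=f$ and $h=f$ (which lies in $H^\infty$ because $\mathcal A_1\subseteq A(\mathbb D)$), we get $fS_\sigma\in\mbox{Mult}(D(m_f),D)$. By Lemma \ref{richterlipschitz}, $D\subseteq D(m_f)$, so $fS_\sigma D\subseteq fS_\sigma D(m_f)\subseteq D$.

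Alternatively, this can be verified directly from Lemma \ref{prop 5.4}(i). For $g\in D$, Lemma \ref{richterlipschitz} gives $g\in D(m_f)$ and hence $fg\in D$. Lemma \ref{radiallimofLip} gives $(fg)(\zeta)=0$ on $\mbox{supp}(\sigma)$, so $D(S_\sigma fg)=D(fg)+2D_\sigma(fg)$. As in the proof of Theorem \ref{singmultiff}, $D_\sigma(fg)\lesssim\|g\|_{H^2}^2\sigma(\mathbb T)$.

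There is essentially no obstacle here. The only point to check is that the hypotheses of the cited results match, namely that $f\in\mathcal A_1$ puts us in the regime $\epsilon=1$ needed for both Theorem \ref{singmultiff} and Corollary \ref{containment of Mult(D)}.
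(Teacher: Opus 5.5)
Your argument is correct and is exactly the paper's proof: (iii)$\implies$(i) of Theorem~\ref{singmultiff} gives $S_\sigma\in\mbox{Mult}(D(m_f))$, and Corollary~\ref{containment of Mult(D)} then gives $fS_\sigma\in\mbox{Mult}(D)$. One small slip in your side remark: to get $fS_\sigma\in\mbox{Mult}(D(m_f),D)$ from Proposition~\ref{wienermult1}(i) you must take $f_1=f$, $f_2\equiv 1$ (so that $D(m_{f_2})=D$) and $h=f$, because the choice $f_1=f_2=f$, $h=f$ would require $f=f^2$.
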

\begin{proof}
    This follows from  Theorem~\ref{singmultiff} and Corollary \ref{containment of Mult(D)}.
\end{proof}
    
In the following theorem, we show that the conclusion of Theorem \ref{singmultiff} may fail in the absence of the assumption $f\in\mathcal A_1.$ Note that for $\frac{1}{2}<\alpha<1$, the function $f(z)=(1-z)^{\alpha}\in \mathcal A_{2\alpha-1}$.
\begin{thm}\label{singnotinmult}
         For $\frac{1}{2}<\alpha<1$, let $f(z)=(1-z)^{\alpha}$, $z\in\mathbb D$.  Let $\sigma\in M_{+}(\mathbb T)$ be a measure singular with respect to $m$. Then the following statements hold:
         \begin{enumerate}
            \item[\rm(i)] $S_{\sigma}\in D(m_f)$ if and only if $\sigma=c\delta_1$ for some $c\geqslant 0$,
             \item[\rm (ii)] $S_{\sigma}\in \mbox{Mult}(D(m_f))$ if and only if $\sigma =0$.
         \end{enumerate}    
    \end{thm}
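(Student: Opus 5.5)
The plan is to read off part (i) from Lemma \ref{prop 5.4}(iii), and to reduce part (ii) to Corollary \ref{RCcor}, then refute that criterion with an explicit power function. Throughout, $f(z)=(1-z)^{\alpha}$ extends continuously to $\overline{\mathbb D}$, and $Z_{\mathbb T}(f)=\{1\}$.

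For (i), Lemma \ref{prop 5.4}(iii) says $S_\sigma\in D(m_f)$ if and only if $f\in D(\sigma)$ and $f=0$ $\sigma$-a.e. Since $f\in A(\mathbb D)$ vanishes on $\mathbb T$ only at $1$, Remark \ref{suppsigmainzeroes} gives $\mbox{supp}(\sigma)\subseteq\{1\}$. Hence $\sigma=c\delta_1$ with $c\geqslant 0$.

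For the converse it suffices to check $f\in D(\delta_1)$. By the local Douglas formula (Lemma \ref{local Douglas}(ii)),
$$D_1(f)=\int_{\mathbb T}|1-\zeta|^{2\alpha-2}\,dm(\zeta).$$
This is finite because $2\alpha-2>-1$. (Alternatively, Lemma \ref{richterfuncmult} already gives $f\in\mathcal A_{2\alpha-1}\subseteq D(\delta_1)$.)

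For (ii), the case $\sigma=0$ is trivial. Conversely, a multiplier lies in $D(m_f)$, so by (i) it suffices to rule out $\sigma=c\delta_1$ with $c>0$. Since $f\in\mathcal A_{2\alpha-1}$, Corollary \ref{RCcor} applies: $S_{c\delta_1}\in\mbox{Mult}(D(m_f))$ if and only if
$$\frac{f}{z-1}=-(1-z)^{\alpha-1}\in\mbox{Mult}(D(m_f),H^2).$$
So I need a single $g\in D(m_f)$ with $(1-z)^{\alpha-1}g\notin H^2$.

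I would use Theorem \ref{lemma4.4}, which describes $D(m_f)$ as $\{g\in H^2: fg\in D\}$. Take $g(z)=(1-z)^{-\gamma}$ with $\gamma\geqslant 0$ to be chosen, and check three conditions by standard power computations near $z=1$.
\begin{itemize}
\item $g\in H^2$ if and only if $2\gamma<1$.
\item $fg=(1-z)^{\alpha-\gamma}\in D$ if and only if $(1-z)^{\alpha-\gamma-1}$ is square area-integrable near $1$, i.e.\ $\alpha-\gamma>0$.
\item $(1-z)^{\alpha-1}g=(1-z)^{\alpha-1-\gamma}\in H^2$ if and only if $2(1+\gamma-\alpha)<1$, i.e.\ $\gamma<\alpha-\tfrac12$.
\end{itemize}

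Choosing $\gamma\in[\alpha-\tfrac12,\tfrac12)$ satisfies all three requirements at once. This interval is nonempty because $\alpha<1$, and such $\gamma$ also satisfies $\gamma<\alpha$. Then $g\in D(m_f)$ but $(1-z)^{\alpha-1}g\notin H^2$, so $S_{c\delta_1}\notin\mbox{Mult}(D(m_f))$.

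The only real work is this choice of test function. The first instinct, taking $g\in D$, fails: functions in $D$ are too mildly singular at $1$ to make $|1-\zeta|^{2\alpha-2}|g|^2$ non-integrable. The point is to exploit the larger space $D(m_f)=\{g\in H^2:fg\in D\}$, where $g$ may blow up like $(1-z)^{-\gamma}$ for $\gamma$ almost $1/2$. The gap $\alpha<1$ is exactly what leaves room for such a $\gamma$.
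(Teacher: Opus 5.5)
Your proof is correct. Part (i) is essentially the paper's argument; the paper invokes Lemma~\ref{richterfuncmult} to get $f\in D(\delta_1)$, where you compute $D_1(f)$ directly.

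For part (ii) you share the paper's reduction via Corollary~\ref{RCcor}, but you then refute $-(1-z)^{\alpha-1}\in\mbox{Mult}(D(m_f),H^2)$ by a genuinely different route.

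The paper uses Lemma~\ref{kelley rep ker rel mult}. Boundedness of $M_\psi^*$ on normalized $H^2$ kernels gives one inequality. The lower bound for the diagonal of the reproducing kernel of $D(m_f)$ from \cite{fallah_JFA}, together with the estimate of Lemma~\ref{Wiener algebra}, then forces every $\psi\in\mbox{Mult}(D(m_f),H^2)$ to be bounded on the radius ending at a boundary zero of $f$. This is incompatible with $(1-t)^{\alpha-1}\to\infty$.

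You instead produce an explicit witness $g=(1-z)^{-\gamma}$ with $\alpha-\frac12\leqslant\gamma<\frac12$. Its membership in $D(m_f)$ follows from Theorem~\ref{lemma4.4} by elementary power computations, and your exponent checks are right.

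Your version is self-contained within the paper and avoids the kernel asymptotics altogether. It even yields a concrete $g$ with $S_{c\delta_1}g\notin D(m_f)$, since $D_1(fg)=\int_{\mathbb T}|1-\zeta|^{2\alpha-2-2\gamma}\,dm(\zeta)=\infty$. However, it depends on $f$ being a pure power, so that $fg$ and $fg/(z-1)$ can be written in closed form.

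The paper's lemma buys generality. For any $f\in\mathcal A_\epsilon$ with $f(\zeta)=0$, combined with Corollary~\ref{RCcor}, it shows that $S_{c\delta_\zeta}\in\mbox{Mult}(D(m_f))$ forces $|f(t\zeta)|\lesssim 1-t$. No test functions in $D(m_f)$ need to be identified.
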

    We will proof Theorem \ref{singnotinmult} as a consequence of a general result on $\mbox{Mult}(D(m_f), H^2)$, $f\in  \mathcal A_{\epsilon}$.

\begin{lem}\label{kelley rep ker rel mult}
    Let $0<\epsilon\leqslant1,~\zeta\in\mathbb T$ and $f\in  \mathcal A_{\epsilon}$ be such that $f(\zeta)=0.$ If $\psi\in \mbox{Mult}(D(m_f), H^2)$, then $$\sup_{0<t<1}|\psi(t\zeta )|< \infty.$$
\end{lem}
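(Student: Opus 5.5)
The plan is to use a reproducing kernel argument. The label ``kelley rep ker rel mult'' suggests exactly this. Since $\psi\in \mbox{Mult}(D(m_f), H^2)$ and both spaces are reproducing kernel Hilbert spaces, the closed graph theorem makes $M_\psi: D(m_f)\to H^2$ bounded; write $\|M_\psi\|$ for its norm. Let $k^{H^2}_w(z)=(1-\overline w z)^{-1}$ be the Szeg\H{o} kernel and $K_w$ the reproducing kernel of $D(m_f)$. The adjoint satisfies $M_\psi^* k^{H^2}_w=\overline{\psi(w)}K_w$. This gives the standard estimate
\[
|\psi(w)|^2 \|K_w\|_{m_f}^2 \leqslant \|M_\psi\|^2 \|k^{H^2}_w\|_{H^2}^2=\frac{\|M_\psi\|^2}{1-|w|^2}.
\]
Hence it suffices to prove the lower bound $\|K_{t\zeta}\|_{m_f}^2=K_{t\zeta}(t\zeta)\gtrsim (1-t^2)^{-1}$ for $t$ near $1$.

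For this lower bound I will use the extremal characterization
\[
K_w(w)=\sup\{|h(w)|^2:\|h\|_{m_f}\leqslant 1\}.
\]
I will test it against the normalized Szeg\H{o} kernels $h_w=k^{H^2}_w$ with $w=t\zeta$. Then $|h_w(w)|^2=(1-t^2)^{-2}$ and $\|h_w\|_{H^2}^2=(1-t^2)^{-1}$. It remains to show that $D_{m_f}(h_w)\lesssim (1-t^2)^{-1}$. Alternatively, by Theorem \ref{lemma4.4}, it is enough to show $D(fh_w)\lesssim (1-t^2)^{-1}$. Given that, $\|h_w\|^2_{m_f}\lesssim (1-t^2)^{-1}$, and therefore $K_w(w)\gtrsim (1-t^2)^{-2}/(1-t^2)^{-1}=(1-t^2)^{-1}$. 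Combining this with the adjoint estimate yields $|\psi(t\zeta)|^2\lesssim \|M_\psi\|^2$ uniformly in $t$, which is the claim. For $t$ away from $1$, boundedness is trivial by continuity.

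The main step is the estimate $D_{m_f}(h_w)\lesssim(1-|w|^2)^{-1}$ for $w=t\zeta$. It is here that $f(\zeta)=0$ and $f\in\Lambda^a_\beta$ with $\beta=\frac{1+\epsilon}{2}>\frac12$ (Lemma \ref{RC_Lip_equiv}) enter. I would compute using $D_{m_f}(h)=\int_{\mathbb T}D_\eta(h)|f(\eta)|^2dm(\eta)$ together with the local Douglas formula, Lemma \ref{local Douglas}. For $h_w$, a direct computation gives
\[
D_\eta(h_w)=\frac{|w|^2}{(1-|w|^2)|1-\overline w\eta|^2}.
\]
Consequently
\[
D_{m_f}(h_w)=\frac{t^2}{1-t^2}\int_{\mathbb T}\frac{|f(\eta)|^2}{|1-t\overline\zeta\eta|^2}dm(\eta).
\]
Now $|f(\eta)|^2\lesssim|\eta-\zeta|^{2\beta}$ and $|1-t\overline\zeta\eta|\geqslant c|\eta-\zeta|$, so the integral is bounded by $\int_{\mathbb T}|\eta-\zeta|^{2\beta-2}dm(\eta)<\infty$, since $2-2\beta<1$. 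This gives $D_{m_f}(h_w)\lesssim (1-t^2)^{-1}$, as needed. The only thing to double-check is the formula for $D_\eta(h_w)$. It follows from
\[
h_w(\xi)-h_w(\eta)=\frac{\overline w(\xi-\eta)}{(1-\overline w\xi)(1-\overline w\eta)}
\]
and $\int_{\mathbb T}|1-\overline w\xi|^{-2}dm(\xi)=(1-|w|^2)^{-1}$.
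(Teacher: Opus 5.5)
Your proof is correct, and your computations check out, including $D_\eta(h_w)$ and $|\eta-t\zeta|\geqslant \tfrac12|\eta-\zeta|$. The first step, applying $M_\psi^*$ to Szeg\H{o} kernels to get $|\psi(w)|^2(1-|w|^2)K^{m_f}(w,w)\lesssim 1$, is the same as in the paper. The difference is how you obtain $K^{m_f}(t\zeta,t\zeta)\gtrsim (1-t)^{-1}$.

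The paper quotes an external two-sided kernel estimate (Theorem 1 of the El-Fallah et al.\ reference), $K^{m_f}(w,w)\gtrsim 1+\int_0^{|w|}\frac{dr}{(1-r)\varphi_{m_f}(rw/|w|)+(1-r)^2}$. It then feeds in $\varphi_{m_f}(r\zeta)\lesssim (1-r)^{2\beta}+(1-r^2)$ from Lemma \ref{Wiener algebra}. You instead test the extremal characterization of $K_w(w)$ against the single function $k^{H^2}_w$ and compute its $D_{m_f}$-seminorm with the local Douglas formula.

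Your computation is in fact the identity
\[
D_{m_f}(k^{H^2}_w)=\frac{|w|^2\,\varphi_{m_f}(w)}{(1-|w|^2)^2}.
\]
It gives the general bound $K^{m_f}(w,w)\geqslant \bigl((1-|w|^2)+|w|^2\varphi_{m_f}(w)\bigr)^{-1}$. So the essential input in both arguments is the same fact, $\varphi_{m_f}(t\zeta)\lesssim 1-t$. You verify it directly from the H\"older bound $|f(\eta)|\lesssim|\eta-\zeta|^{\beta}$, $\beta>\tfrac12$, without needing Lemma \ref{Wiener algebra} or the external kernel theorem.

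Your route is more elementary and self-contained. The paper's route gives a sharper kernel estimate in general; for instance, a single test function misses the logarithmic growth of the kernel when $\varphi_\mu$ is bounded below, as for $D$ itself. That extra precision is irrelevant here, because $\|\cdot\|_{m_f}\geqslant\|\cdot\|_{H^2}$ forces $K^{m_f}(w,w)\leqslant (1-|w|^2)^{-1}$, so your lower bound is already sharp.

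One minor point: your estimate $\|k^{H^2}_{t\zeta}\|_{m_f}^2\leqslant (1+C)(1-t^2)^{-1}$ is uniform over all $t\in(0,1)$. The separate continuity argument for $t$ away from $1$ is therefore unnecessary.
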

\begin{proof}
    Let $\psi\in \mbox{Mult}(D(m_f), H^2).$ Then $M_{\psi}^*:H^2\to D(m_f)$ is bounded. Thus 
    $$\sup_{w\in\mathbb D}\frac{\|M_{\psi}^*(K^{H^2}(\cdot, w))\|_{m_f}}{\|K^{H^2}(\cdot, w)\|_{H^2}} < \infty,$$
    where $K^{H^2}(z, w)=\frac{1}{1-z\overline{w}}$, $z,w\in\mathbb D$.
    This implies 
    \begin{equation}\label{necessary}
        \sup_{w\in\mathbb D}|\psi(w)|^2(1-|w|^2) K^{m_f}(w,w)< \infty,
    \end{equation}
    where $K^{m_f}(z, w)$ is the reproducing kernel of $D(m_f)$.
    By \cite[Theorem 1]{fallah_JFA}, we have
    \begin{align}\label{eqnlip}
        K^{m_f}(w,w)\gtrsim\ 1+\int_0^{|w|}\frac{dr}{(1-r)\varphi_{m_f}(\frac{rw}{|w|})+(1-r)^2}.
    \end{align}
    Since $f\in\mathcal A_\epsilon,$ Lemma \ref{RC_Lip_equiv} implies that $f\in \Lambda^a_{\beta}$ where $\beta=\frac{1+\epsilon}{2}\in(\frac{1}{2},1].$ This together with $f(\zeta)=0$ yields
    $|f(r\zeta)|^2\lesssim (1-r)^{2\beta}.$
   Using Lemma \ref{Wiener algebra}, we have $$\varphi_{m_f}(r\zeta)\lesssim |f(r\zeta)|^2+(1-r^2)\lesssim (1-r)^{2\beta}+(1-r^2).$$
    Thus by \eqref{eqnlip}, for $t\in(0,1)$, we have 
    \begin{align*}
        K^{m_f}(t\zeta,t\zeta) &\gtrsim 1+\int_0^t \frac{dr}{(1-r)^{2\beta+1}+(1-r)^2(1+r)+(1-r)^2}\\
        & \gtrsim 1+\int_0^t \frac{dr}{(1-r)^2}\\
        &=\frac{1}{1-t}.
    \end{align*}
   Combining this  with \eqref{necessary}, we get 
    \begin{equation*}
        |\psi( t\zeta)|^2\lesssim \frac{1}{(1-t^2)K^{m_f}( t\zeta,t\zeta)}\lesssim \frac{1}{1+t}\lesssim 1,~ 0<t<1.
    \end{equation*}
    This completes the proof.
\end{proof}

\begin{proof}[Proof of theorem \ref{singnotinmult}] 
                \rm(i) This is obtained by applying Lemma \ref{prop 5.4} and Lemma \ref{richterfuncmult}.
        
        \rm(ii) Suppose $\sigma\neq 0$ and $S_{\sigma}\in\mbox{Mult}(D(m_f))$. Then $S_{\sigma}\in D(m_f),$ and by part \rm (i) of this theorem, $\sigma=c\delta_1$ for some $c> 0$. By Corollary \ref{RCcor}, we have $\frac{f}{(z-1)}\in\mbox{Mult}(D(m_f), H^2).$ 
        Thus by Lemma \ref{kelley rep ker rel mult}, $\sup_{0<t<1}|\frac{f(t)}{1-t}|< \infty,$ which is a contradiction. This completes the proof.
           \end{proof}

 The following proposition  provides a sufficient condition for an inner function to be a multiplier of $D(m_f)$.
 \begin{prop}\label{sufficient} 
Let $f\in H^2$, and $\varphi$ be an inner function. If $f^2\varphi'\in H^\infty,$ then $\varphi$ is a multiplier of $D({m_f}).$   
 \end{prop}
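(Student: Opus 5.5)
Since $\varphi\in H^\infty$ is inner, $\|\varphi g\|_{H^2}=\|g\|_{H^2}$, so the task is to bound $D_{m_f}(\varphi g)$ by a multiple of $\|g\|_{m_f}^2$ for every $g\in D(m_f)$. I would avoid the area integral with the weight $\varphi_{m_f}$, which is awkward here because $f$ is only assumed to be in $H^2$, and work pointwise on the circle with the local Dirichlet integrals. Recall that
$$D_{m_f}(h)=\int_{\mathbb T}D_\zeta(h)\,|f(\zeta)|^2\,dm(\zeta).$$
Theorem \ref{carlrep}(i) gives, for each $\zeta\in\mathbb T$,
$$D_\zeta(\varphi g)=D_\zeta(g)+D_\zeta(\varphi)|g(\zeta)|^2 .$$
Integrating this against $|f(\zeta)|^2dm(\zeta)$ yields
$$D_{m_f}(\varphi g)=D_{m_f}(g)+\int_{\mathbb T}D_\zeta(\varphi)\,|g(\zeta)|^2|f(\zeta)|^2\,dm(\zeta).$$
The plan is therefore to show that $D_\zeta(\varphi)\,|f(\zeta)|^2$ is bounded for $m$-a.e.\ $\zeta$. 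Then the second term is at most $C\|g\|_{H^2}^2$, giving $\|\varphi g\|_{m_f}^2\le (1+C)\|g\|_{m_f}^2$.

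By Theorem \ref{carlrep}(ii), $D_\zeta(\varphi)=|\varphi'(\zeta)|$, the modulus of the angular derivative, or $+\infty$ if none exists. So we need a.e.\ control of $|\varphi'(\zeta)|\,|f(\zeta)|^2$ from the hypothesis that $F:=f^2\varphi'$ satisfies $|F|\le M$ on $\mathbb D$. Fix $\zeta\in\mathbb T$ at which $f$ has a nonzero radial limit and $\varphi$ has a unimodular radial limit; this holds for a.e.\ $\zeta$ off $\{f=0\}$, and the set where $f(\zeta)=0$ contributes nothing to the integral. Near the radius to $\zeta$ we then have $|\varphi'(r\zeta)|\le M/|f(r\zeta)|^2$, which stays bounded as $r\to1^-$. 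Since $\varphi'$ is bounded along the radius, $\varphi$ has a finite angular derivative at $\zeta$. I would justify this by the Julia--Carathéodory theorem: boundedness of $\varphi'$ on the radius makes $(\varphi(\zeta)-\varphi(r\zeta))/((1-r)\zeta)=\frac{1}{1-r}\int_r^1\varphi'(s\zeta)\,ds$ bounded, so $\liminf_{z\to\zeta}(1-|\varphi(z)|)/(1-|z|)<\infty$. The theorem then also gives $|\varphi'(\zeta)|=\lim_{r\to1}|\varphi'(r\zeta)|$, since the derivative has the nontangential limit $\varphi'(\zeta)$. Hence
$$|\varphi'(\zeta)|\,|f(\zeta)|^2=\lim_{r\to1}|F(r\zeta)|\le M.$$

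The main obstacle is this boundary step: passing from boundedness of $f^2\varphi'$ inside $\mathbb D$ to the pointwise bound $D_\zeta(\varphi)|f(\zeta)|^2\le M$ a.e. We need both that the angular derivative exists and that it equals the radial limit of $\varphi'$. If invoking Julia--Carathéodory with only radial control feels delicate, I would argue in the other direction. Take $\liminf_{r\to1}|\varphi'(r\zeta)|\le M/|f(\zeta)|^2$. The angular-derivative quantity $(1-|\varphi(r\zeta)|)/(1-r)$ is bounded by $\sup_{s\in[r,1)}|\varphi'(s\zeta)|$ via the integral above, so it is finite. This gives $D_\zeta(\varphi)=|\varphi'(\zeta)|\le M/|f(\zeta)|^2$ as well. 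With the a.e.\ bound in hand, the integration step above completes the proof.
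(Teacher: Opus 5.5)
Your proof is correct and follows the paper's argument: integrate the Richter--Sundberg identity against $|f|^2\,dm$, then bound the extra term using $D_\zeta(\varphi)|f(\zeta)|^2=|\varphi'(\zeta)|\,|f(\zeta)|^2\le\|f^2\varphi'\|_\infty$ for $m$-a.e.\ $\zeta$. The differences are minor: the paper gets a.e.\ existence of the angular derivative from $\varphi'=(f^2\varphi')/f^2$ lying in the Nevanlinna class plus Julia--Carath\'eodory, and proves the estimate for polynomials and extends by density, whereas you get the angular derivative pointwise from radial boundedness of $\varphi'$ where $f(\zeta)\neq0$ and apply the identity directly to $g\in D(m_f)$. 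One small slip, in your fallback argument only: the quantity to bound is $\limsup_{r\to1}|\varphi'(r\zeta)|$, not the $\liminf$, which is harmless because $M/|f(r\zeta)|^2$ converges.
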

 \begin{proof} 
Without loss of generality, assume that $f\in H^2$ is nonzero. Suppose $f^2\varphi'\in H^\infty.$ Since $f\in H^2,$  we have $f^2\in H^1$, and thus $\varphi'\in \mathcal{N},$ where $\mathcal N$ denotes the Nevanlinna class. This guarantees that the nontangential limit $\varphi'(\zeta)$ exists $m$-a.e. on $\mathbb T.$ Since $\varphi$ is inner, the nontangential limit $\varphi(\zeta)$ exists and $|\varphi(\zeta)|=1,$ $m$-a.e. on $\mathbb T.$ Hence, by the Julia-Carath\'{e}odory theorem \cite[p. 57]{Shapiro}, it follows that the angular derivative  $\varphi'(\zeta)$ in the sense of Carath\'{e}odory exists $m$-a.e. on $\mathbb T.$ 
 Consequently, by Theorem \ref{carlrep}\rm(ii)\rm,
 \begin{equation}\label{derivative}
     D_{\zeta}(\varphi)=|\varphi'(\zeta)|,~ m\text{-a.e. on } \mathbb T.
 \end{equation}
  Let $p$ be any polynomial. By Theorem \ref{carlrep}\rm(i)   and \eqref{derivative}, we get $$ D_\zeta(\varphi p)=D_\zeta(p)+|\varphi'(\zeta)\|p(\zeta)|^2,~m\text{-a.e. on } \mathbb T.$$ Integrating we get, 
    \begin{align*}
        D_{m_f}(\varphi p)&=D_{m_f}(p)+\int_{\mathbb{T}}|\varphi'(\zeta)\|p(\zeta)|^2|f(\zeta)|^2dm(\zeta)\\
        &\lesssim D_{m_f}(p)+  \int_{\mathbb{T}}|p(\zeta)|^2 dm(\zeta) \\
        & =\|p\|_{m_f}^2. 
    \end{align*}
Since polynomials are dense in $D(m_f)$, it follows that $\varphi\in \mbox{Mult} (D(m_f)).$
\end{proof}
\begin{rem}
It is well-known that if $\varphi$ is an inner function with $\varphi'\in H^\infty$, then $\varphi$ must be a finite Blaschke product. On the other hand, $f^2\varphi'\in H^{\infty}$ (where $f\in H^2$) holds for many inner functions which are not necessarily finite Blaschke products.
\end{rem}
In general, the converse of Proposition \ref{sufficient} is not true, as $z$ is always an inner multiplier of $D(m_f)$ for any $f\in H^2$. In the following Proposition, we show that the converse of Proposition \ref{sufficient} is true when $f\in \mathcal A_1$ and $\varphi$ is singular inner.
\begin{prop}\label{singHinftyequiv}
    Let $f\in \mathcal A_1$ and $\sigma\in M_+(\mathbb T)$ be a measure singular with respect to $m$. Then the conditions $(i), (ii),  (iii)$ in Theorem \ref{singmultiff} are equivalent to $f^2 S'_\sigma\in H^\infty.$
\end{prop}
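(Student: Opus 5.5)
One direction is immediate from earlier results. If $f^2S_\sigma'\in H^\infty$, then Proposition \ref{sufficient} (applied with $\varphi=S_\sigma$) gives $S_\sigma\in \mbox{Mult}(D(m_f))$, which is condition (i). So the work is to show that (iii), $\mbox{supp}(\sigma)\subseteq Z_{\mathbb T}(f)$, implies $f^2S_\sigma'\in H^\infty$.

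Differentiating \eqref{eqnSingular}, I will use
$$S_\sigma'(z)=-S_\sigma(z)\int_{\mathbb T}\frac{2e^{it}}{(e^{it}-z)^2}\,d\sigma(e^{it}),$$
so that
$$|S_\sigma'(z)|\leqslant \int_{\mathbb T}\frac{2}{|e^{it}-z|^2}\,d\sigma(e^{it}).$$
Since $f\in\mathcal A_1$, Lemma \ref{RC_Lip_equiv} gives $f\in\Lambda^a_1$, i.e. $f$ is Lipschitz on $\overline{\mathbb D}$.

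For each $e^{it}\in\mbox{supp}(\sigma)$ we have $f(e^{it})=0$, since $f\in A(\mathbb D)$ and (iii) holds. The Lipschitz bound then gives
$$|f(z)|\lesssim |z-e^{it}|\quad\text{for } z\in\mathbb D,$$
with a constant independent of $t$. Therefore
$$|f(z)|^2|S_\sigma'(z)|\leqslant \int_{\mathrm{supp}(\sigma)}\frac{2|f(z)|^2}{|e^{it}-z|^2}\,d\sigma(e^{it})\lesssim \sigma(\mathbb T),$$
uniformly in $z\in\mathbb D$. Hence $f^2S_\sigma'\in H^\infty$, and combined with Theorem \ref{singmultiff} this closes the cycle of equivalences.

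The main point needing care is that the Lipschitz estimate $|f(z)|\lesssim|z-e^{it}|$ holds uniformly over all $e^{it}\in\mbox{supp}(\sigma)$, and that it is legitimate to restrict the integral to $\mbox{supp}(\sigma)$. Both follow from Remark \ref{suppsigmainzeroes} and the global Lipschitz constant of $f$. Everything else is direct computation, since $|S_\sigma|\leqslant 1$.
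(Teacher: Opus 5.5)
Your proof is correct and matches the paper's argument. Proposition \ref{sufficient} gives the implication from $f^2S_\sigma'\in H^\infty$ to (i), and for (iii) $\Rightarrow$ $f^2S_\sigma'\in H^\infty$ you combine the formula $S_\sigma'(z)=-S_\sigma(z)\int_{\mathbb T}\frac{2\zeta}{(\zeta-z)^2}\,d\sigma(\zeta)$ with the bound $|f(z)|\lesssim|z-\zeta|$ for $\zeta\in\mbox{supp}(\sigma)$, which holds uniformly in $\zeta$ because of the global Lipschitz constant of $f$; the paper does exactly this, stating the bound only for $\sigma$-almost every $\zeta$.
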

\begin{proof} In view of Proposition \ref{sufficient}, it suffices to show that condition \rm(iii)  of Theorem \ref{singmultiff} implies $f^2S'_\sigma\in H^\infty.$  Suppose 
    $\mbox{supp}(\sigma)\subseteq  Z_\mathbb{T}(f).$   
    Since $f\in \mathcal{A}_1,$ by Lemma \ref{RC_Lip_equiv}, we have for all $z\in\mathbb D,$
    \begin{equation}\label{f-bound}  
    |f(z)|\lesssim|z - \zeta|, \text{ for}~\sigma \text{ almost every } \zeta\in\mathbb T.
\end{equation}
From  \cite[Eqn.(4.9)]{mashreghi2012derivatives}, we also have that
    \begin{equation}
        S_\sigma'(z)=-S_\sigma(z)\int_\mathbb{T}\frac{2\zeta}{(\zeta - z)^2}d\sigma(\zeta),~z\in \mathbb D.\label{singfunexpression}
    \end{equation}
 Therefore, using \eqref{f-bound} and \eqref{singfunexpression}, for all $z\in \mathbb D,$ we have
   \begin{align*}  |f(z)|^2|S_{\sigma}'(z)|&\leqslant2\int_\mathbb{T}\frac{|f(z)|^2}{|\zeta-z|^2}d\sigma(\zeta)\\ &\lesssim\sigma(\mathbb T)<\infty,
   \end{align*}
   which gives the desired result. 
\end{proof}

   In the following proposition, we obtain some necessary conditions for an inner function $\varphi$ to be in $D(m_f)$. We also show that these conditions are sufficient if $f\in \mathcal{A}_\epsilon$, for some $\epsilon>0$. We point out that  similar results for Blaschke products also appeared in \cite[Theorem 3.1]{Elfallah}.
    \begin{prop}\label{innerfunclassification}
    Let $f\in H^2$. Let $\varphi$ be an inner function of the form $BS_\sigma$. If $\varphi\in D(m_f),$ then $\sum_n |f(z_n)|^2 < \infty$ and $f(\zeta)=0 \text{ for}~\sigma$ almost every $\zeta\in\mathbb T.$ 
    If, in addition, $f\in \mathcal{A_\epsilon}$ for some $\epsilon>0,$ then the above conditions are also sufficient.
\end{prop}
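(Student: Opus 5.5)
The plan is to compute $D_{m_f}(\varphi)$ explicitly via the Richter--Sundberg formula, Theorem \ref{carlrep}, integrated against $m_f$. Since $D_{m_f}(\varphi)=\int_{\mathbb T}D_\zeta(\varphi)|f(\zeta)|^2\,dm(\zeta)$, Theorem \ref{carlrep}(iii) and Tonelli give
\begin{align*}
D_{m_f}(\varphi)&=\sum_n\int_{\mathbb T}\frac{1-|z_n|^2}{|\zeta-z_n|^2}|f(\zeta)|^2\,dm(\zeta)+\int_{\mathbb T}\int_{\mathbb T}\frac{2|f(\zeta)|^2}{|e^{it}-\zeta|^2}\,dm(\zeta)\,d\sigma(e^{it})\\
&=\sum_n\varphi_{m_f}(z_n)+D_{m_f}(S_\sigma),
\end{align*}
where the Blaschke term is recognized as the Poisson integral of $|f|^2$. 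Thus $\varphi\in D(m_f)$ if and only if $\sum_n\varphi_{m_f}(z_n)<\infty$ and $S_\sigma\in D(m_f)$, and the two factors can be treated separately.

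\textbf{Necessity.} Assume $\varphi\in D(m_f)$ with $f\in H^2$. Subharmonicity of $|f|^2$ gives $|f(z)|^2\le\varphi_{m_f}(z)$, so $\sum_n|f(z_n)|^2\le\sum_n\varphi_{m_f}(z_n)<\infty$. Since $D_{m_f}(S_\sigma)<\infty$, Lemma \ref{prop 5.4}(iii) shows that $f(e^{it})=0$ for $\sigma$-a.e. $e^{it}$. This direction uses only $f\in H^2$.

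\textbf{Sufficiency.} Now assume $f\in\mathcal A_\epsilon$, and without loss of generality $0<\epsilon\le1$. For the Blaschke part, Lemma \ref{Wiener algebra} gives $\varphi_{m_f}(z_n)\le|f(z_n)|^2+C(1-|z_n|^2)$. Summing, and using the hypothesis together with the Blaschke condition $\sum(1-|z_n|)<\infty$, we get $\sum_n\varphi_{m_f}(z_n)<\infty$. For the singular part, Lemma \ref{richterfuncmult} gives $f\in D(\sigma)$. Combined with the hypothesis that $f$ vanishes $\sigma$-a.e., Lemma \ref{prop 5.4}(iii) yields $S_\sigma\in D(m_f)$. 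Since $\|\varphi\|_{H^2}=1$, it follows that $\varphi\in D(m_f)$.

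The main point needing care is justifying the additive splitting $D_\zeta(\varphi)=D_\zeta(B)+D_\zeta(S_\sigma)$ for $m$-a.e. $\zeta$, including the possibility that the value is infinite. This is exactly Theorem \ref{carlrep}(iii), and Tonelli handles the nonnegative integrands. The only analytic estimate used beyond earlier results is Lemma \ref{Wiener algebra}, which is where $f\in\mathcal A_\epsilon$ enters.
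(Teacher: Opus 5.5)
Your proposal is correct and follows the paper's proof essentially step for step. Both use the identity $D_{m_f}(BS_\sigma)=\sum_n\varphi_{m_f}(z_n)+D_{m_f}(S_\sigma)$, obtained from Theorem \ref{carlrep}(iii) and Fubini/Tonelli. For necessity, both use the bound $\varphi_{m_f}\geqslant |f|^2$ together with Lemma \ref{prop 5.4}(iii). For sufficiency, both combine Lemma \ref{Wiener algebra} with the Blaschke condition for the Blaschke part, and Lemmas \ref{richterfuncmult} and \ref{prop 5.4}(iii) for the singular part.
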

\begin{proof}
       By Theorem \ref{carlrep}\rm(iii) and Fubini's theorem, we obtain 
    \begin{align}\notag
     D_{m_f}(BS_\sigma)
     &=D_{m_f}(B)+D_{m_f}(S_\sigma)\\\notag
     &=\int_\mathbb{T}\sum_n\frac{1-|z_n|^2}{|\zeta-z_n|^2}|f(\zeta)|^2dm(\zeta)+D_{m_f}(S_\sigma)\\
     &=\sum_n \varphi_{m_f}(z_n)+ D_{m_f}(S_\sigma)\label{phi_m_f}\\
     &\geqslant \sum_n|f(z_n)|^2+D_{m_f}(S_\sigma).\label{phi_mf>|f|^2}
    \end{align}
    Suppose $\varphi=BS_\sigma\in D(m_f).$ \eqref{phi_mf>|f|^2} together with Lemma \ref{prop 5.4} yields that $\sum_n |f(z_n)|^2<\infty$ and $f(\zeta)=0 \text{ for}~\sigma$ almost every $\zeta\in\mathbb T.$

For the converse, suppose $f\in \mathcal{A}_\epsilon$ for some $\epsilon>0$ and assume that $\sum_n |f(z_n)|^2<\infty$ and $f(\zeta)=0 \text{ for}~\sigma$ almost every $\zeta\in\mathbb T.$
Thus from Lemmas \ref{richterfuncmult} and \ref{prop 5.4}  , $S_\sigma\in D(m_f).$
    Using Lemma \ref{Wiener algebra} and \eqref{phi_m_f}, it follows that
    \begin{equation*}
        D_{m_f}(BS_\sigma)\lesssim \sum_n |f(z_n)|^2+\sum_n(1-|z_n|^2)+D_{m_f}(S_\sigma)<\infty.
    \end{equation*}
    Hence $\varphi\in D(m_f).$
        \end{proof}

     \begin{lem}\label{BlaschkeinMult}
     Let $\zeta\in\mathbb T,\text{and }\kappa\in[1,\infty). $ Let $B$ be a Blaschke product with zeros $\{z_n\}$ such that $\{z_n\}\subseteq S_{\kappa}(\zeta).$ Then there exists a $C>0$ such that
     $$|B'(z)|\leqslant \frac{C}{|\zeta-z|^{2}}, ~z\in \mathbb D.$$
     \end{lem}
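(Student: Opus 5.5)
We will bound $B'$ by differentiating the product termwise. Since $|B|\le 1$ and each Blaschke factor $b_n(z)=\frac{|z_n|}{z_n}\frac{z_n-z}{1-\overline{z_n}z}$ is bounded by $1$, the product rule (valid for the locally uniformly convergent product) gives
\begin{equation*}
|B'(z)|\leqslant \sum_n |b_n'(z)|=\sum_n \frac{1-|z_n|^2}{|1-\overline{z_n}z|^2},\qquad z\in\mathbb D .
\end{equation*}
So it suffices to show that, uniformly in $z\in\mathbb D$, each term satisfies $\frac{1-|z_n|^2}{|1-\overline{z_n}z|^2}\lesssim \frac{1-|z_n|}{|\zeta-z|^2}$. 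Then summing and using the Blaschke condition $\sum_n(1-|z_n|)<\infty$ gives the claim with $C$ a multiple of $\sum_n(1-|z_n|)$.

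The key geometric estimate is $|\zeta-z|\lesssim |1-\overline{z_n}z|$ for all $z\in\mathbb D$ and all $n$, with constant depending only on $\kappa$. We write $|\zeta - z|\leqslant |\zeta-z_n|+|z_n-z|$. By the Stolz condition, $|\zeta-z_n|\leqslant\kappa(1-|z_n|)$. Next we use the standard inequalities $1-|z_n|\leqslant |1-\overline{z_n}z|$ and $|z_n-z|\leqslant |1-\overline{z_n}z|$. The first holds since $|1-\overline{z_n}z|\geqslant 1-|z_n||z|\geqslant 1-|z_n|$. For the second, $|z_n-z|^2=|1-\overline{z_n}z|^2-(1-|z_n|^2)(1-|z|^2)$. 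Combining these,
\begin{equation*}
|\zeta-z|\leqslant(\kappa+1)|1-\overline{z_n}z|.
\end{equation*}
Hence
\begin{equation*}
\frac{1-|z_n|^2}{|1-\overline{z_n}z|^2}\leqslant \frac{2(\kappa+1)^2(1-|z_n|)}{|\zeta-z|^2},
\end{equation*}
and summing over $n$ yields $|B'(z)|\leqslant C|\zeta-z|^{-2}$ with $C=2(\kappa+1)^2\sum_n(1-|z_n|)$.

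The only real obstacle is the geometric inequality $|\zeta-z|\lesssim|1-\overline{z_n}z|$; it is exactly where the Stolz-angle hypothesis enters, and the splitting through $z_n$ above handles it. The remaining issue is justifying termwise differentiation of the infinite product. This follows from the logarithmic derivative: $B'/B=\sum_n b_n'/b_n$ converges locally uniformly off the zeros. Then $|B'|\leqslant\sum_n|b_n'|\prod_{k\ne n}|b_k|\leqslant\sum_n|b_n'|$, which extends to all of $\mathbb D$ by continuity. The unimodular constant $\gamma$ and any zeros at the origin do not affect the estimate.
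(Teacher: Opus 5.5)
Your proof is correct. The paper gives no argument of its own here; it simply quotes Eqn.~(2.3) of the Girela reference. Your write-up is therefore a self-contained alternative. The termwise bound $|B'|\leqslant\sum_n|b_n'|$ together with the geometric estimate $|\zeta-z|\leqslant(\kappa+1)|1-\overline{z_n}z|$, obtained by routing through $z_n$, is all that is needed. It gives the explicit constant $C=2(\kappa+1)^2\sum_n(1-|z_n|)$, and it shows that the Stolz hypothesis enters only through $|\zeta-z_n|\lesssim 1-|z_n|$. The citation route buys only brevity.

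Your argument also proves Lemma~\ref{HO_BlaschkeinMult}, for which the paper cites yet another source. Suppose $|\zeta-z_n|^\gamma\leqslant\kappa(1-|z_n|)$. Then $|\zeta-z_n|\leqslant\kappa^{1/\gamma}|1-\overline{z_n}z|^{1/\gamma}$. Also $|z_n-z|\leqslant|1-\overline{z_n}z|\leqslant 2^{1-1/\gamma}|1-\overline{z_n}z|^{1/\gamma}$. Hence $|\zeta-z|^{2\gamma}\lesssim|1-\overline{z_n}z|^2$, and summing over $n$ gives $|B'(z)|\lesssim|\zeta-z|^{-2\gamma}$.

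Your logarithmic-derivative justification of termwise differentiation is fine. A quicker one: the partial products $B_N$ converge to $B$ locally uniformly, so $B_N'\to B'$, while $|B_N'|\leqslant\sum_{n\leqslant N}|b_n'|$.
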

 \begin{proof}
     This follows from \cite[Eqn. (2.3)]{Girela}.
 \end{proof}
 We now provide a sufficient condition for an infinite Blaschke $B$ to be a multiplier of $D(m_f).$            

 \begin{prop}\label{|B'(z)| bound}
    Let $\zeta\in \mathbb T$ and  $f\in \mathcal{A}_1$ be such that $f(\zeta)=0.$ Suppose $B$ is a Blaschke product with zeros $\{z_n\}\subseteq S_{\kappa}(\zeta).$ Then $B\in \mbox{Mult}(D(m_f)).$ 
 \end{prop}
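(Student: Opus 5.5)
The plan is to apply Proposition \ref{sufficient}: it suffices to show $f^2B'\in H^\infty$. Lemma \ref{BlaschkeinMult} gives a constant $C>0$ with
$$|B'(z)|\leqslant \frac{C}{|\zeta-z|^{2}},\quad z\in\mathbb D.$$
We then need a matching bound $|f(z)|\lesssim|z-\zeta|$ on all of $\mathbb D$.

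For this second bound, use Lemma \ref{RC_Lip_equiv} with $\epsilon=1$. It gives $f\in\Lambda^a_1$, so $f\in A(\mathbb D)$ and $|f(z)-f(w)|\lesssim |z-w|$ for $z,w\in\overline{\mathbb D}$. Since $f(\zeta)=0$, taking $w=\zeta$ yields $|f(z)|\lesssim |z-\zeta|$ for every $z\in\mathbb D$. Here $f(\zeta)$, the radial limit, coincides with the boundary value of the continuous extension of $f$.

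Multiplying the two estimates gives
$$|f(z)|^2|B'(z)|\lesssim \frac{|z-\zeta|^2}{|z-\zeta|^2}=1,\quad z\in\mathbb D,$$
so $f^2B'\in H^\infty$. Since $f\in\mathcal A_1\subseteq A(\mathbb D)\subseteq H^2$ and $B$ is inner, Proposition \ref{sufficient} shows $B\in\mbox{Mult}(D(m_f))$.

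The only substantive input is Lemma \ref{BlaschkeinMult}. It is the step that requires the zeros $\{z_n\}$ to lie in a single Stolz angle $S_\kappa(\zeta)$. The remaining steps are direct, so I expect no obstacle beyond correctly quoting that lemma and the Lipschitz characterization.
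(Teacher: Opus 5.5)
Your proposal is correct and follows essentially the same route as the paper: the paper also combines Lemma \ref{BlaschkeinMult} (giving $(z-\zeta)^2B'\in H^\infty$) with the Lipschitz bound from Lemma \ref{RC_Lip_equiv} (giving $\frac{f}{z-\zeta}\in H^\infty$) to conclude $f^2B'\in H^\infty$, and then applies Proposition \ref{sufficient}.
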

 \begin{proof}
     By Lemma \ref{BlaschkeinMult}, we have $(z-\zeta)^2B'\in H^\infty$.  
      Since $f\in \mathcal{A}_1$ and $f(\zeta)=0$, by Lemma \ref{RC_Lip_equiv}, we get $\frac{f}{z-\zeta}\in H^\infty$. Thus
      $$f^2B'=\frac{f^2}{(z-\zeta)^2}(z-\zeta)^2B'\in H^\infty.$$  Hence, by Proposition \ref{sufficient}, $B\in\mbox{Mult}(D(m_f)).$
     \end{proof}

\begin{cor}\label{Blaschkemultcor}
 Let $\zeta\in\mathbb T$ and $f\in \mathcal{A}_1$.  Then $f(\zeta)=0$ if and only if 
there exists a Blaschke product $B\in \mbox{Mult}(D({m_f}))$ with zeros $\{z_n\}$ such that $z_n\to \zeta$.
   \end{cor}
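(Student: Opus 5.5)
The sufficiency direction is immediate from Proposition \ref{|B'(z)| bound}. Suppose $f(\zeta)=0$. Take any sequence $\{z_n\}$ lying on the radius ending at $\zeta$, for instance $z_n=(1-2^{-n})\zeta$. Then $\sum_n(1-|z_n|)<\infty$, so the Blaschke product $B$ with zeros $\{z_n\}$ exists. Moreover $|\zeta-z_n|=1-|z_n|$, so $\{z_n\}\subseteq S_{1}(\zeta)$. Since $f\in\mathcal A_1$ and $f(\zeta)=0$, Proposition \ref{|B'(z)| bound} gives $B\in\mbox{Mult}(D(m_f))$, and clearly $z_n\to\zeta$.

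For necessity, suppose $B\in\mbox{Mult}(D(m_f))$ is a Blaschke product whose zeros satisfy $z_n\to\zeta$. In particular $B=B\cdot 1\in D(m_f)$. The first part of Proposition \ref{innerfunclassification}, which only needs $f\in H^2$, then gives
\[
\sum_n |f(z_n)|^2<\infty .
\]
Hence $f(z_n)\to 0$. Since $f\in\mathcal A_1\subseteq A(\mathbb D)$, $f$ extends continuously to $\overline{\mathbb D}$, and the radial limit $f(\zeta)$ equals this continuous extension at $\zeta$. Therefore
\[
f(\zeta)=\lim_{n\to\infty} f(z_n)=0 .
\]

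Neither direction presents a real obstacle. The only point to check is that a radial sequence lies in a Stolz angle with $\kappa=1$, which holds because $|\zeta-r\zeta|=1-r$. The necessity direction in fact only uses $B\in D(m_f)$ rather than the full multiplier property.
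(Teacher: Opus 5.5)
Your proof is correct and follows essentially the same route as the paper. For one direction you take radial zeros in a Stolz angle at $\zeta$ (the paper uses $z_n=\zeta(1-\frac{1}{n^2})$) and apply the Blaschke-in-a-Stolz-angle multiplier proposition; for the converse you combine $\sum_n|f(z_n)|^2<\infty$ from Proposition~\ref{innerfunclassification} with the continuity of $f$ on $\overline{\mathbb D}$.
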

\begin{proof}

Suppose $f(\zeta)=0$. Let $z_n=\zeta(1-\frac{1}{n^2}),~n\in\mathbb N.$ Then, clearly, $z_n\to \zeta$, $\sum_n(1-|z_n|)<\infty$ and it is easy to verify that $\{z_n\}\subseteq S_\kappa(\zeta)$ for any $\kappa \geqslant1$. Let $B$ be the infinite Blaschke with zeros $\{z_n\}.$ Hence, by Proposition  \ref{|B'(z)| bound}, $B\in \mbox{Mult}(D(m_f)).$
      The converse follows from Proposition \ref{innerfunclassification}, combined with the fact that $f\in A(\mathbb D)$. 
\end{proof}

%%%%%%%%%%%%%%%%%%%%%%%%%%%%%%%%%%%%%%%%

%%%%%%%%%%%%%%%%%%%%%%%%%%%%%%%%%%%%%%
%%%%%%%%%%%%%%%%%%%%%%%%%%%%%%%%%%%%%%%%%%%%%%%
\section{Similarity of the operators \texorpdfstring{$(M_z, D(m_f))$}{(Mz,D(mf))} via multiplier algebras}\label{section similarityappln}
In this section, using the results of section \ref{section inner multiplier}, we provide a proof of Proposition \ref{simwrtzeros}.  
Recall that $\mathcal S$ and $\mathcal B$ denote the set of singular inner functions and Blaschke products on $\mathbb D$, respectively. 

 \begin{thm}\label{diffmultalg}
                 Let  $f,g\in \mathcal{A}_1$. If  $Z_{\mathbb T}(f)\neq Z_{\mathbb T}(g)$, then 
                 \begin{enumerate}
                     \item[\rm(i)] $\mbox{Mult}(D({m_f}))\cap \mathcal S$ and $\mbox{Mult}(D({m_g}))\cap \mathcal S$ are not equal as sets,
                     \item[\rm(ii)] $\mbox{Mult}(D({m_f}))\cap \mathcal B$ and $\mbox{Mult}(D({m_g}))\cap \mathcal B$ are not equal  as sets.
                 \end{enumerate}
                 In particular, if $Z_{\mathbb T}(f)\neq Z_{\mathbb T}(g)$, then  $\mbox{Mult}(D({m_f}))$ and $\mbox{Mult}(D({m_g}))$ are not equal as vector spaces.
    \end{thm}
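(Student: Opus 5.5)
Since $Z_{\mathbb T}(f)\neq Z_{\mathbb T}(g)$, after interchanging the roles of $f$ and $g$ if necessary we may fix a point $\zeta\in\mathbb T$ with $f(\zeta)=0$ but $g(\zeta)\neq 0$. Because $f,g\in\mathcal A_1\subseteq A(\mathbb D)$, the radial limits exist at every point of $\mathbb T$ and agree with the boundary values of the continuous extensions. The plan is to produce, for this single point $\zeta$, one singular inner function and one Blaschke product that lie in $\mbox{Mult}(D(m_f))$ but not in $\mbox{Mult}(D(m_g))$.

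For (i), take $\sigma=\delta_\zeta$, which is singular with respect to $m$. We have $\mbox{supp}(\sigma)=\{\zeta\}\subseteq Z_{\mathbb T}(f)$, so the implication (iii)$\Rightarrow$(i) of Theorem \ref{singmultiff} (valid since $f\in\mathcal A_1$) gives $S_{\delta_\zeta}\in\mbox{Mult}(D(m_f))$. On the other hand, $\{\zeta\}\not\subseteq Z_{\mathbb T}(g)$, so by (i)$\Rightarrow$(iii) of Theorem \ref{singmultiff} applied to $g\in\mathcal A_1$ we get $S_{\delta_\zeta}\notin\mbox{Mult}(D(m_g))$. Hence the two sets $\mbox{Mult}(D(m_f))\cap\mathcal S$ and $\mbox{Mult}(D(m_g))\cap\mathcal S$ differ.

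For (ii), apply Corollary \ref{Blaschkemultcor} to $f$. Since $f(\zeta)=0$, there is a Blaschke product $B\in\mbox{Mult}(D(m_f))$ with zeros $z_n\to\zeta$; concretely, the zeros $z_n=\zeta(1-1/n^2)$ work by Proposition \ref{|B'(z)| bound}. Suppose, for contradiction, that $B\in\mbox{Mult}(D(m_g))$. The converse direction of Corollary \ref{Blaschkemultcor} then forces $g(\zeta)=0$, which is false. To make that step explicit: $B\in\mbox{Mult}(D(m_g))$ implies $B=B\cdot 1\in D(m_g)$, so Proposition \ref{innerfunclassification} gives $\sum_n|g(z_n)|^2<\infty$. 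Hence $g(z_n)\to0$, and continuity of $g$ on $\overline{\mathbb D}$ yields $g(\zeta)=0$, a contradiction.

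For the final assertion, if $\mbox{Mult}(D(m_f))=\mbox{Mult}(D(m_g))$ as sets, then their intersections with $\mathcal S$ would coincide, contradicting (i). I do not expect a real obstacle here. The only points to check are that the interchange of $f$ and $g$ is harmless, since the conclusions are symmetric, and that radial zeros coincide with zeros of the continuous extension for functions in $\mathcal A_1\subseteq A(\mathbb D)$.
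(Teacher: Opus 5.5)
Your proposal is correct and is essentially the paper's own proof. For (i) you apply Theorem~\ref{singmultiff} twice to $S_{\delta_\zeta}$ with $\zeta\in Z_{\mathbb T}(f)\setminus Z_{\mathbb T}(g)$; for (ii) you use Corollary~\ref{Blaschkemultcor}, and your explicit argument via Proposition~\ref{innerfunclassification} and continuity of $g$ on $\overline{\mathbb D}$ is just that corollary's converse direction unpacked, while the final assertion follows from (i) exactly as you say.
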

    
    \begin{proof}
\rm(i) Suppose $Z_{\mathbb T}(f)\neq Z_{\mathbb T}(g)$. Then, either $Z_{\mathbb T}(f)\setminus Z_{\mathbb T}(g)$  or  $Z_{\mathbb T}(g)\setminus Z_{\mathbb T}(f)$ is nonempty. Suppose $Z_{\mathbb T}(f)\setminus Z_{\mathbb T}(g)$ is nonempty, and let $\zeta \in Z_{\mathbb T}(f)\setminus Z_{\mathbb T}(g).$  Then applying Theorem \ref{singmultiff} twice, we get $S_{\delta_{\zeta}}\in \mbox{Mult}(D({m_f}))$ and $S_{\delta_{\zeta}}\notin \mbox{Mult}(D({m_g}))$.

\rm(ii) This follows using similar arguments as in \rm(i) together with Corollary \ref{Blaschkemultcor}.
    \end{proof}

\begin{rem}
    In view of Theorem \ref{singmultiff}, it is evident that the converse of Theorem \ref{diffmultalg}(i) also holds,  that is, if $f,g\in\mathcal{A}_1$  with $Z_{\mathbb T}(f)=Z_{\mathbb T}(g)$, then $\mbox{Mult}(D({m_f}))\cap \mathcal S$ and $\mbox{Mult}(D({m_g}))\cap \mathcal S$ are equal as sets. In section \ref{concluding remarks}, we will see that the converse of Theorem \ref{diffmultalg}(ii) does not hold true in general.
\end{rem}
\begin{rem}
Since Dirichlet-type spaces have the complete Nevanlinna-Pick property (see \cite{Shimorin}), it is worth pointing out that a result of Hartz (\cite[Corollary 3.2]{Hartz Multiplier algebra}) implies that for $\mu, \nu\in M_+(\mathbb T)$,  $\mbox{Mult}(D(\mu))=\mbox{Mult}(D(\nu))$ isometrically if and only if $D(\mu)=D(\nu)$ with equality of norms. In view of \cite[Theorem 4.1]{richter1991representation}, this is further equivalent to  $\mu=\nu$.
\end{rem}

 The following lemma is well known, see, for example \cite[section 10.3]{alemanhartz}. It provides a relation between similarity of multiplication operators $M_z$ on two reproducing kernel Hilbert spaces and the corresponding multiplier algebras.  
 \begin{lem}\label{similaritymult}
  Let $H_1$ and $H_2$ be two reproducing kernel Hilbert spaces of holomorphic functions on $\mathbb D$ such that polynomials are dense in both $H_1$ and $H_2$. Assume that $M_z$ is bounded on both $H_1$ and $H_2$. If $(M_z, H_1)$ is similar to $(M_z, H_2)$, then 
  $\mbox{Mult}(H_1)= \mbox{Mult}(H_2)$ as vector spaces.
\end{lem}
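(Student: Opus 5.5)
Since the statement gives a bounded invertible intertwiner, the plan is to show that it must act as multiplication by a fixed function, and then use that function to transport multipliers.

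Let $T:H_1\to H_2$ be bounded and invertible with $T(M_z,H_1)=(M_z,H_2)T$. Put $\theta:=T1\in H_2$. For every polynomial $p$ we get $Tp=p(M_z)T1=p\theta$. The first goal is to show that $\theta\in\mbox{Mult}(H_1,H_2)$ with $M_\theta=T$.

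Take $h\in H_1$ and polynomials $p_n\to h$ in $H_1$; this uses density of polynomials. Then $p_n\theta=Tp_n\to Th$ in $H_2$. Point evaluations are continuous in both reproducing kernel Hilbert spaces, so for each $w\in\mathbb D$ we have $p_n(w)\to h(w)$ and $(p_n\theta)(w)\to (Th)(w)$. Hence $(Th)(w)=h(w)\theta(w)$, so $T=M_\theta$. By the same argument $T^{-1}=M_\psi$, where $\psi=T^{-1}1\in H_1$.

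It follows that $\theta\psi h=h$ for every $h\in H_1$. Taking $h=1$ gives $\theta\psi=1$, so $\theta$ has no zeros and $\psi=1/\theta$.

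Now let $\varphi\in\mbox{Mult}(H_1)$. I claim $\varphi\in\mbox{Mult}(H_2)$. For $g\in H_2$ we have $\psi g\in H_1$, hence $\varphi\psi g\in H_1$, and then
$$\theta\varphi\psi g=\varphi g\in H_2.$$
So $\mbox{Mult}(H_1)\subseteq\mbox{Mult}(H_2)$. The reverse inclusion follows by symmetry, with $T^{-1}$ intertwining in the other direction. Both sets are vector spaces of functions on $\mathbb D$, so they are equal.

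The one step needing care is identifying $T$ with multiplication by $\theta$ on all of $H_1$, not just on polynomials. This is exactly where density of polynomials and continuity of point evaluations are used. Everything else is formal algebra.
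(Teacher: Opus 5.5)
Your proof is correct and complete. The paper gives no proof of this lemma and only cites it as well known from \cite[section 10.3]{alemanhartz}; your argument is the standard one behind such statements. You identify the intertwiner as $T=M_\theta$ with $\theta=T1$, using density of polynomials and continuity of point evaluations, get $T^{-1}=M_{1/\theta}$ in the same way, and then write $\varphi g=\theta\,\varphi\,(g/\theta)$ to transport multipliers in both directions.
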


\begin{rem}
   Note that the converse of the above lemma is not true in general. For example, consider the weighted Bergman spaces $H^{(\lambda)}$ on $\mathbb D$ determined by the positive definite kernel $(1-z\overline{w})^{-\lambda}$, $\lambda \geqslant 1.$  It is well known that $\mbox{Mult}(H^{(\lambda)})=H^{\infty}$ for each $\lambda \geqslant 1$. But if $\lambda_1\neq \lambda_2$, then from 
    \cite[Theorem $2^\prime$]{Shields}, it follows that the operators $(M_z, H^{(\lambda_1)})$ and $(M_z, H^{(\lambda_2)})$ are not similar.   
\end{rem}

\begin{proof}[Proof of Proposition \ref{simwrtzeros}]
Suppose $Z_\mathbb T(f)\neq Z_{\mathbb{T}}(g)$ and $(M_z, D(m_f))$ is similar to $(M_z, D(m_g))$. Then, by Theorem \ref{diffmultalg}, $\mbox{Mult}(D(m_f))$ and $\mbox{Mult}(D(m_g))$ are not equal as vector spaces which is a contradiction by Lemma \ref{similaritymult}.

For the second part, by Lemma \ref{RichterequalDspaces} and the fact that $f\in A(\mathbb D)$, \rm(ii) and \rm(iii) are equivalent.   The implication \rm(i) $\implies$ \rm(ii) follows from the first part of this theorem. For \rm(iii) $\implies$ \rm(i),  note that if $D(m_f)=D$ with equivalence of norms, then the identity operator from $D(m_f)$ to $D$ is an invertible  bounded linear operator which intertwines $(M_z, D(m_f))$ and $(M_z, D)$. 
\end{proof}

\begin{cor}\label{corrational}
    Let $f,g$ be two rational functions on $\mathbb D$ with poles off $\overline{\mathbb D}$. If the roots of both $f$ and $g$ on $\mathbb T$ are simple, then the following statements are equivalent:
    \begin{enumerate}
        \item [\rm(i)\rm] $\mbox{Mult}(D(m_f))=\mbox{Mult}(D(m_g))$ as vector spaces,
        \item [\rm(ii)\rm] $Z_{\mathbb T}(f)=\mathbb Z_{\mathbb T}(g),$
        \item [\rm(iii)\rm] $D(m_f)=D(m_g)$ with equivalence of norms,
        \item[\rm(iv)\rm] $(M_z,D(m_f))$ is similar to $(M_z,D(m_g)).$
        \end{enumerate}
\end{cor}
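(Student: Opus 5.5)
The plan is to deduce everything from Proposition \ref{simwrtzeros}, Theorem \ref{diffmultalg} and Lemma \ref{RichterequalDspaces}, after checking that rational functions with poles off $\overline{\mathbb D}$ belong to $\mathcal A_1$. Such a rational function $f$ is holomorphic on a neighbourhood of $\overline{\mathbb D}$, so $f'$ is bounded on $\mathbb D$. Hence $\sup_{z\in\mathbb D}|f'(z)|^2(1-|z|^2)^{0}<\infty$, that is, $f,g\in\mathcal A_1$. Also $f,g\in A(\mathbb D)$, so $Z_{\mathbb T}(f)$ is just the finite set of zeros of $f$ on $\mathbb T$. (We may assume $f,g\not\equiv 0$, since otherwise $m_f=0$ and the statement degenerates.)

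The implications would be established in the following order.
\begin{enumerate}
\item[] \textbf{(iv)$\Rightarrow$(i).} This is Lemma \ref{similaritymult}. Polynomials are dense in each $D(\mu)$ and $M_z$ is bounded there, so the hypotheses of the lemma hold.
\item[] \textbf{(i)$\Rightarrow$(ii).} This is the contrapositive of the final assertion of Theorem \ref{diffmultalg}, applied to $f,g\in\mathcal A_1$.
\item[] \textbf{(iii)$\Rightarrow$(iv).} The identity map $D(m_f)\to D(m_g)$ is a bounded invertible operator that intertwines the two copies of $M_z$.
\item[] \textbf{(ii)$\Rightarrow$(iii).} This is the only step that uses simplicity of the roots, and it is the step where the work lies.
\end{enumerate}

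For (ii)$\Rightarrow$(iii), I would use Lemma \ref{RichterequalDspaces}. Both $m_f$ and $m_g$ are mutually absolutely continuous with $m$, because a nonzero rational function has only finitely many zeros on $\mathbb T$. So it suffices to show that $h=|f|^2/|g|^2$ on $\mathbb T$ satisfies $h,1/h\in L^\infty(m)$.

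Write $Z_{\mathbb T}(f)=Z_{\mathbb T}(g)=\{\zeta_1,\dots,\zeta_k\}$. Since all these roots are simple, we can factor
\[
f=\prod_{j=1}^k(z-\zeta_j)\,f_0,\qquad g=\prod_{j=1}^k(z-\zeta_j)\,g_0,
\]
where $f_0,g_0$ are rational, have poles off $\overline{\mathbb D}$, and have no zeros on $\mathbb T$. Then $|f_0|$ and $|g_0|$ are continuous and nonvanishing on the compact set $\mathbb T$, so they are bounded above and below by positive constants there. Hence $h=|f_0|^2/|g_0|^2$ lies, together with $1/h$, in $L^\infty(m)$, and Lemma \ref{RichterequalDspaces} gives $D(m_f)=D(m_g)$ with equivalence of norms.

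The only delicate point is the bookkeeping in this factorization: the common zero set has to cancel with equal multiplicities. That is exactly where simplicity of the roots is needed, since a double root of $f$ at a point where $g$ has a simple root would make $h$ vanish there.
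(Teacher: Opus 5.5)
Your proposal is correct and runs through the same cycle as the paper: (i)$\Rightarrow$(ii) by Theorem \ref{diffmultalg}, (ii)$\Rightarrow$(iii) by Lemma \ref{RichterequalDspaces}, (iii)$\Rightarrow$(iv) via the identity map, and (iv)$\Rightarrow$(i) by Lemma \ref{similaritymult}. Your factorization argument and the check that $f,g\in\mathcal A_1$ just fill in details the paper leaves implicit, namely why $|f|^2/|g|^2$ is bounded above and below on $\mathbb T$ and why Theorem \ref{diffmultalg} applies.
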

\begin{proof}
The implication \rm(i) $\implies$ \rm(ii) follows from Theorem \ref{diffmultalg}.
 Since the roots of $f$ and $g$ on $\mathbb{T}$ are simple, by Lemma \ref{RichterequalDspaces}, \rm(ii) $\implies$ \rm(iii) holds. The implication  \rm(iii) $\implies$ \rm(iv) is obvious. Finally, 
\rm(iv) $\implies$ \rm(i) follows from Lemma \ref{similaritymult}.
\end{proof}

\begin{cor}
    Let $f\in \mathcal{A}_1.$  The following statements are equivalent:
\begin{itemize}
    \item [\rm(i)\rm] $\mbox{Mult}(D(m_f))=\mbox{Mult}(D)$ as vector spaces,
    \item[(ii)] $Z_\mathbb{T}(f)=\emptyset$,
    \item[(iii)] $D(m_f)=D$ with equivalence of norms,
    \item[(iv)] $(M_z, D(m_f))$ is similar to $(M_z, D)$.
\end{itemize}
\end{cor}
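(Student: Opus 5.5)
The plan is to observe that $\mbox{Mult}(D)=\mbox{Mult}(D(m_{1}))$, since $m_1=m$ and $D(m)=D$. The constant function $1$ lies in $\mathcal A_1$ and $Z_{\mathbb T}(1)=\emptyset$. Thus the corollary is essentially the special case $g\equiv 1$ of the machinery already developed, and I will prove the cycle of implications (i)$\implies$(ii)$\implies$(iii)$\implies$(iv)$\implies$(i).

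For (i)$\implies$(ii), I apply Theorem \ref{diffmultalg} with $g\equiv1\in\mathcal A_1$. If $Z_{\mathbb T}(f)\neq\emptyset=Z_{\mathbb T}(1)$, then $\mbox{Mult}(D(m_f))$ and $\mbox{Mult}(D(m_1))=\mbox{Mult}(D)$ differ as vector spaces, which contradicts (i). Concretely, any $\zeta\in Z_{\mathbb T}(f)$ gives $S_{\delta_\zeta}\in\mbox{Mult}(D(m_f))$ by Theorem \ref{singmultiff}. However, $S_{\delta_\zeta}\notin\mbox{Mult}(D)$, because finite Blaschke products are the only inner functions in $D$.

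For (ii)$\implies$(iii), I use that $f\in\mathcal A_1\subseteq A(\mathbb D)$, so $|f|$ is continuous on the compact set $\mathbb T$. If $f$ has no zeros on $\mathbb T$, then $|f|^2$ is bounded above and below by positive constants there. Then $h=\frac{dm_f}{dm}=|f|^2$ satisfies $h,1/h\in L^\infty$, and Lemma \ref{RichterequalDspaces} yields $D(m_f)=D$ with equivalence of norms. This step is exactly the equivalence (ii)$\iff$(iii) already noted in the proof of Proposition \ref{simwrtzeros}.

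For (iii)$\implies$(iv), the identity map is a bounded invertible operator intertwining the two copies of $M_z$. For (iv)$\implies$(i), I apply Lemma \ref{similaritymult}: polynomials are dense in both $D(m_f)$ and $D$, and $M_z$ is bounded on each. No step is a genuine obstacle. The only point requiring care is (i)$\implies$(ii): the fact that $g\equiv1$ is admissible in Theorem \ref{diffmultalg} must be checked, and since $1'=0$ the supremum defining $\mathcal A_1$ is $0$, so the check is trivial.
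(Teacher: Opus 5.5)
Your proposal is correct and is essentially the paper's own argument: the paper proves this corollary by the same cycle it uses for Corollary~\ref{corrational}. That cycle is Theorem~\ref{diffmultalg} for (i)$\implies$(ii), applied with $g\equiv 1$ so that $m_g=m$ and $D(m_g)=D$; Lemma~\ref{RichterequalDspaces} for (ii)$\implies$(iii); the identity intertwiner for (iii)$\implies$(iv); and Lemma~\ref{similaritymult} for (iv)$\implies$(i). Your explicit check that $1\in\mathcal A_1$ and your concrete witness $S_{\delta_\zeta}$ just spell out details the paper leaves implicit.
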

\begin{proof}
The proof follows using similar arguments used in the proof of Corollary \ref{corrational}.
\end{proof}

\begin{cor}
    Let $f\in\mathcal{A}_1$ be such that $f(1)=0$ and $g(z)=(1-z)^\alpha,~\frac{1}{2}<\alpha<1.$ Then $(M_z,D(m_f))$ is not similar to $(M_z,D(m_g).$
\end{cor}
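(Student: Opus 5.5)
The plan is to argue by contradiction through the multiplier algebras, combining Lemma \ref{similaritymult} with the singular inner multiplier results of Section \ref{section inner multiplier}. Suppose $(M_z,D(m_f))$ is similar to $(M_z,D(m_g))$. Both spaces are reproducing kernel Hilbert spaces in which the polynomials are dense and $M_z$ is bounded. Lemma \ref{similaritymult} then gives $\mbox{Mult}(D(m_f))=\mbox{Mult}(D(m_g))$ as vector spaces. The task is therefore to find a single function lying in one algebra but not in the other. The natural candidate is a singular inner function supported at the common boundary zero $1$.

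First, take $S_{\delta_1}$. Since $f\in\mathcal A_1$ and $f(1)=0$, we have $\mbox{supp}(\delta_1)=\{1\}\subseteq Z_{\mathbb T}(f)$. The implication (iii)$\implies$(i) of Theorem \ref{singmultiff} then gives $S_{\delta_1}\in \mbox{Mult}(D(m_f))$.

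Second, on the $g$ side, we have $g(z)=(1-z)^\alpha$ with $\frac12<\alpha<1$. Theorem \ref{singnotinmult}(ii) with $\sigma=\delta_1\neq 0$ says directly that $S_{\delta_1}\notin\mbox{Mult}(D(m_g))$. This contradicts the equality of the multiplier algebras, and the corollary follows.

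No step here is a real obstacle, because the heavy lifting was already done in Theorem \ref{singnotinmult}. There, Lemma \ref{kelley rep ker rel mult} and the kernel estimate from \cite{fallah_JFA} showed that $\frac{g}{z-1}$ cannot be a multiplier from $D(m_g)$ into $H^2$, since $|g(t)|/(1-t)=(1-t)^{\alpha-1}\to\infty$. The only point to check is that the hypotheses of Lemma \ref{similaritymult} hold: polynomials are dense in every $D(\mu)$, as recalled in the introduction, and $M_z$ is bounded on $D(\mu)$, since $z$ is always a multiplier.
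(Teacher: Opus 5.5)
Your proof is correct and is essentially the paper's own argument. The paper likewise uses Lemma \ref{similaritymult} to pass from similarity to equality of $\mbox{Mult}(D(m_f))$ and $\mbox{Mult}(D(m_g))$, and then separates the two algebras by the singular inner function $S_{\delta_1}$, via Theorem \ref{singmultiff} for $f$ and Theorem \ref{singnotinmult}(ii) for $g$.
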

\begin{proof}
    The proof follows from Theorems \ref{singmultiff}, \ref{singnotinmult} and Lemma \ref{similaritymult}.
    \end{proof}

\subsection{Weakly circular cyclic, analytic \texorpdfstring{$2$}{2}-isometries} 
In this subsection, we study circularity and weak circularity of the operator $(M_z, D(\mu)).$  Following \cite{AHHK}, an operator $T$ on a Hilbert space $H$ is called  \emph{circular} if $T$ is unitarily equivalent to $\lambda T$ for all $\lambda\in \mathbb T$. The operator $T$ is said to be \emph{weakly circular} if $T$ is similar to $\lambda T$ for all $\lambda\in \mathbb T$. We start with the following proposition which classifies all $\mu$ in $M_+(\mathbb T)$ for which $(M_z, D(\mu))$ is circular. Although some of the arguments used in the proof of the following proposition are well known (see, for example, \cite{GKT}), we provide the proof for reader's convenience. 

\begin{prop}
    Let $\mu\in M_+(\mathbb T)$. The operator $(M_z, D(\mu))$ is circular if and only if $\mu=cm$ for some $c\geqslant 0.$
\end{prop}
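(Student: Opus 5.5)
For the ``if'' direction I would use rotation invariance of the norm. If $\mu=0$, then $D(\mu)=H^2$, where the claim is classical. If $\mu=cm$ with $c>0$, then $\varphi_\mu\equiv c$. For $\lambda\in\mathbb T$ define $U_\lambda g(z)=g(\lambda z)$. A change of variables in the area integral gives $D_{cm}(U_\lambda g)=D_{cm}(g)$, and rotation invariance of $m$ gives $\|U_\lambda g\|_{H^2}=\|g\|_{H^2}$. Hence $U_\lambda$ is unitary on $D(cm)$ with inverse $U_{\bar\lambda}$. Since
$$U_\lambda M_z U_\lambda^{*}g(z)=\lambda z\,g(z),$$
we get $U_\lambda M_z U_\lambda^*=\lambda M_z$, so $M_z$ is unitarily equivalent to $\lambda M_z$ for every $\lambda\in\mathbb T$.

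For the ``only if'' direction, suppose $W_\lambda$ is unitary on $D(\mu)$ with $W_\lambda M_z=\lambda M_zW_\lambda$. The plan is to show $W_\lambda$ is essentially a composition with a rotation, and then read off rotation invariance of $\mu$.

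\begin{itemize}
\item[(1)] Set $h=W_\lambda 1$. Iterating the intertwining relation on polynomials gives $W_\lambda p=h\cdot(p\circ\lambda)$. By density of polynomials and continuity of point evaluations, $W_\lambda g=h\,(g\circ\lambda)$ for all $g\in D(\mu)$, where $(g\circ\lambda)(z)=g(\lambda z)$.
\item[(2)] Show $h$ is constant, using the $2$-isometry structure. Take $g\in\ker M_z^*=\mathbb C\cdot 1$, which holds since $M_z$ is cyclic analytic and $\ker M_z^*$ is spanned by the reproducing kernel at $0$, namely the constants. Since $W_\lambda$ is unitary and intertwines $M_z$ with $\lambda M_z$, it maps $\ker M_z^*$ onto $\ker(\lambda M_z)^*=\ker M_z^*$. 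Hence $h=W_\lambda 1$ is a constant of modulus $\|1\|_\mu^{-1}\|1\|_\mu=1$.
\item[(3)] Conclude that $g\mapsto g\circ\lambda$ is unitary on $D(\mu)$ for all $\lambda\in\mathbb T$. The $H^2$ part of the norm is already rotation invariant, so $D_\mu(g\circ\lambda)=D_\mu(g)$. Written via local Dirichlet integrals, this reads $D_{\mu_\lambda}(g)=D_\mu(g)$ for all $g$, where $\mu_\lambda$ is the rotated measure.
\item[(4)] Apply this to $g=z^n$ and polarize (or use that $D_\mu$ determines $\mu$, e.g.\ \cite[Theorem 4.1]{richter1991representation}, as invoked in the remark after Theorem \ref{diffmultalg}). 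Either way $\mu_\lambda=\mu$ for all $\lambda\in\mathbb T$.
\item[(5)] A rotation invariant finite positive measure on $\mathbb T$ has all nonzero Fourier coefficients equal to $0$. Hence $\mu=cm$ with $c=\mu(\mathbb T)\geqslant 0$.
\end{itemize}

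The main obstacle is step (4): extracting $\mu_\lambda=\mu$ from equality of the norms. I would first try quoting Richter's uniqueness result, since $D(\mu_\lambda)=D(\mu)$ with equal norms forces $\mu_\lambda=\mu$.

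Failing that, I would compute directly. Take $g=1+az^k$, for which
$$D_\mu(1+az^k)=|a|^2 D_\mu(z^k),$$
and note that $D_\mu(z^k)$ involves only $\hat\mu(0)$, so it carries no information. Instead use mixed polynomials $g=z^j+bz^k$: the cross terms in $D_\mu(g)$ equal a positive multiple of $\hat\mu(j-k)$ times $\bar b$. Under rotation this cross term becomes $\lambda^{j-k}\hat\mu(j-k)$, so rotation invariance of $D_\mu$ forces $\hat\mu(n)=0$ for $n\neq0$.
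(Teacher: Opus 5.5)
Your proposal is correct, and it follows a somewhat different route from the paper.

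\textbf{The ``if'' direction.} The paper notes that for $\mu=cm$ the monomials are orthogonal, so $(M_z,D(\mu))$ is a unilateral weighted shift, and then cites \cite[Proposition 1.2]{AHHK}. Your rotation operator $U_\lambda g=g(\lambda\,\cdot)$ is exactly the diagonal unitary $z^n\mapsto\lambda^n z^n$ behind that citation, so your argument simply makes this step self-contained.

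\textbf{The ``only if'' direction.} The paper uses only that the implementing unitary sends $1$ to a unimodular constant, which comes from $\ker M_z^*=\mathrm{span}\{1\}$ exactly as in your step (2). It then computes directly
\begin{equation*}
\langle z^n,z^m\rangle_{D(\mu)}=\lambda^{n-m}\langle z^n,z^m\rangle_{D(\mu)},
\end{equation*}
so the monomials are orthogonal. The formula $\langle z^n,z^m\rangle_{D(\mu)}=\min\{m,n\}\int_{\mathbb T}\zeta^{n-m}\,d\mu(\zeta)$ then kills every nonzero Fourier coefficient of $\mu$.

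You instead first identify each implementing unitary $W_\lambda$ as a unimodular multiple of the rotation. You then translate this into $D_{\mu_\lambda}=D_\mu$, and from there into $\mu_\lambda=\mu$.

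\textbf{What each route buys.} Your version gives a structural statement: circularity of $(M_z,D(\mu))$ is equivalent to rotation invariance of $\mu$, and the implementing unitaries are forced to be rotations up to a unimodular scalar. The cost is the extra density and point-evaluation argument in step (1) and the detour through $\mu_\lambda$. The paper avoids both because it only ever applies the unitary to monomials.

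\textbf{On your step (4).} The step you flagged as the main obstacle is not really one. Your mixed-monomial polarization is precisely the paper's monomial inner-product formula. Applied directly to $\langle U_\lambda z^j,U_\lambda z^k\rangle_{D(\mu)}=\lambda^{j-k}\langle z^j,z^k\rangle_{D(\mu)}$, it gives $\hat\mu(n)=0$ for $n\neq 0$ without passing through $\mu_\lambda=\mu$ or Richter's uniqueness theorem. One small point: to recover the full complex cross term from $D_\mu(z^j+bz^k)$, you need $b$ to take two non-collinear values, for example $b=1$ and $b=i$.
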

\begin{proof}
Suppose  $\mu=cm$. Then monomials are orthogonal in $D(\mu)$, and therefore $(M_z, D(\mu))$ is unitarily equivalent to a unilateral weighted shift. Hence by \cite[Proposition 1.2]{AHHK}, $(M_z, D(\mu))$ is circular. For the converse, 
    let $(M_z, D(\mu))$ be circular. Let $\lambda\in \mathbb T$ and $U_{\lambda}$ be a unitary operator such that $U_{\lambda}M_z=\lambda M_z U_{\lambda}.$ Since $\ker M_{z}^*=\mbox{span} \{1\}$, we have $U_{\lambda}^*(1)=\eta$ for some $\eta\in \mathbb T$. Therefore, for any non-negative integers $m,n$, we have 
    \begin{align*}
        \langle z^n, z^m\rangle_{D(\mu)}= \langle M_z^n(1), M_z^m(1)\rangle_{D(\mu)}&= \langle \lambda^nU_{\lambda}^*M_z^nU_{\lambda}(1),\lambda^m U_{\lambda}^*M_z^mU_{\lambda}(1)\rangle_{D(\mu)}\\
        &=\lambda^{n-m}\langle z^n, z^m \rangle_{D(\mu)}.
    \end{align*}
      This yields $\langle z^n, z^m \rangle_{D(\mu)}=0$ for $m\neq n.$ This together with the fact that  $$\langle z^n, z^m \rangle_{D(\mu)} = \mbox{min}\{m,n\}\int_{\mathbb T}\zeta^{n-m} d\mu(\zeta), m\neq n$$ implies $\int_{\mathbb T}\zeta^{k} d\mu(\zeta)=0$ for all nonzero integers $k$. Hence $\mu$ is of the form $cm$ for some $c\geqslant 0$.
\end{proof}

It is easily verified that every operator $T$ which is similar to a circular operator is weakly circular.   The following corollary shows that, up to similarity, the Dirichlet shift is the only weakly circular operator within a class of cyclic analytic $2$-isometries.

\begin{cor}\label{themweaklycircular}
Let $f\in \mathcal{A}_1$ be a nonzero function and $\mu\in M_+(\mathbb T)$. Suppose $\mu=\mu_s+m_f,$ where $\mu_s$ is singular with respect to $m$. Then $(M_z, D(\mu))$ is weakly circular if and only if $\mu_s=0$ and  $Z_{\mathbb T}(f)=\emptyset.$ In this case, $D(\mu)=D$ with equivalence of norms, and therefore, $(M_z, D(\mu))$ is similar to  $(M_z, D).$
\end{cor}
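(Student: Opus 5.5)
Fix $\lambda\in\mathbb T$. The plan is to reduce weak circularity to a comparison of $D(\mu)$ with $D(\mu_\lambda)$, where $\mu_\lambda$ is the rotated measure, and then feed this into Richter's quasisimilarity theorem, Lemma \ref{RichterequalDspaces}, and the multiplier-algebra arguments of Section \ref{section inner multiplier}.

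The sufficiency direction is quick. If $\mu_s=0$ and $Z_{\mathbb T}(f)=\emptyset$, then $\mu=m_f$. Since $f\in A(\mathbb D)$ has no zeros on $\mathbb T$, $|f|$ is bounded above and below on $\mathbb T$. By Lemma \ref{RichterequalDspaces}, $D(\mu)=D$ with equivalence of norms, so the identity map gives a similarity between $(M_z,D(\mu))$ and $(M_z,D)$. The Dirichlet shift is circular by the preceding proposition with $\mu=m$, and any operator similar to a circular operator is weakly circular.

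For necessity, assume $(M_z,D(\mu))$ is weakly circular. The composition $R_\lambda h(z)=h(\lambda z)$ is a unitary from $D(\mu)$ onto $D(\mu_\lambda)$, where $\mu_\lambda(E)=\mu(\lambda E)$, and it satisfies $R_\lambda M_z=\lambda^{-1}M_zR_\lambda$; this follows from rotation invariance of the Poisson kernel. Hence $\bar\lambda M_z$ on $D(\mu)$ is unitarily equivalent to $M_z$ on $D(\mu_{\bar\lambda})$. Weak circularity then says $(M_z,D(\mu))$ is similar to $(M_z,D(\mu_\lambda))$ for every $\lambda\in\mathbb T$.

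First I show $\mu_s=0$. Similarity implies quasisimilarity, so Richter's theorem \cite[Theorem 6.7]{richter1991representation} forces $\mu$ and $\mu_\lambda$ to be mutually absolutely continuous for every $\lambda$. Rotation preserves the Lebesgue decomposition, so the singular part $\mu_s$ and its rotate $(\mu_s)_\lambda$ are mutually absolutely continuous. In particular, the closed set $K=\operatorname{supp}\mu_s$ satisfies $\lambda K=K$ for all $\lambda\in\mathbb T$. A nonempty closed rotation-invariant subset of $\mathbb T$ is all of $\mathbb T$, so $\operatorname{supp}\mu_s=\mathbb T$, and I still need a contradiction. The tool is to pass to sets: pick a Borel set $E$ with $m(E)=0$ and $\mu_s(E^c)=0$. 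Then $\lambda E$ also carries $\mu_s$ for every $\lambda$. Fubini on $\mathbb T\times\mathbb T$ with $m\times\mu_s$ gives
\[
\int \mu_s(\lambda E)\,dm(\lambda)=\int m(\bar\zeta E)\,d\mu_s(\zeta)=0,
\]
so $\mu_s(\lambda E)=0$ for $m$-a.e.\ $\lambda$. This contradicts $\mu_s(\lambda E)=\mu_s(\mathbb T)>0$, hence $\mu_s=0$ and $\mu=m_f$. I expect this measure-theoretic step to be the main obstacle; the Fubini trick above is my first attempt.

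It remains to show $Z_{\mathbb T}(f)=\emptyset$. We now have $\mu=m_f$ and $\mu_\lambda=m_{f_\lambda}$ with $f_\lambda(z)=f(\lambda z)\in\mathcal A_1$, and $Z_{\mathbb T}(f_\lambda)=\bar\lambda Z_{\mathbb T}(f)$. By the first part of Proposition \ref{simwrtzeros}, $Z_{\mathbb T}(f)$ is invariant under all rotations. It is closed because $f\in A(\mathbb D)$, so it is either empty or all of $\mathbb T$. It cannot be all of $\mathbb T$, since then $f\equiv 0$, contrary to the assumption that $f$ is nonzero. Hence $Z_{\mathbb T}(f)=\emptyset$, and the final assertions about $D(\mu)=D$ and similarity to $(M_z,D)$ follow as in the sufficiency paragraph.
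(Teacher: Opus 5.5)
Your proof is correct and follows the paper's argument. Weak circularity, together with the rotation equivalence of Lemma \ref{studiamath} and Richter's quasisimilarity theorem, gives mutual absolute continuity of $\mu$ and $\mu_\lambda$ for every $\lambda$; Proposition \ref{simwrtzeros} applied to $f_\lambda$ then makes $Z_{\mathbb T}(f)$ rotation invariant, hence empty.

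The only real difference is that the paper cites Lemma \ref{lefthaarquas}, while you give a direct and valid Fubini argument that $\mu_s=0$. Two small slips are harmless because $\lambda$ ranges over all of $\mathbb T$ and $m$ is invariant under rotations and conjugation: the intertwining relation should read $R_\lambda M_z=\lambda M_zR_\lambda$ rather than $\lambda^{-1}M_zR_\lambda$, and $\{\lambda:\zeta\in\lambda E\}$ is a reflected rotate of $E$ rather than $\bar\zeta E$.
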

We need few lemmas to prove Corollary \ref{themweaklycircular}.
 For any  $\mu\in M_+(\mathbb T)$ and $\lambda\in\mathbb T,$ let $\mu_{\lambda}\in M_{+}(\mathbb T)$ denote the measure defined by
            \begin{equation*}
                \mu_{\lambda}(\Delta)=\mu(\overline{\lambda}\Delta),~\mbox{for any Borel set} ~ \Delta\subseteq \mathbb T.
            \end{equation*}
\begin{lem}\label{lefthaarquas}{\rm(\cite[Proposition 11]{Bourbaki})}
            Let $\mu\in M_+(\mathbb T)$.
   If $\mu_{\lambda}$ and $\mu$ are mutually absolutely continuous for all $\lambda\in\mathbb T$, then $\mu$ is mutually absolutely continuous with respect to $m$.
        \end{lem}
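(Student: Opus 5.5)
We prove this by comparing $\mu$ with its average over all rotations, which turns out to be a constant multiple of $m$. We assume $\mu\neq 0$; for $\mu=0$ the conclusion is vacuous or false depending on convention, and in the paper's application $\mu$ is nonzero. For a Borel set $E\subseteq\mathbb T$ we have
$$\mu_\lambda(E)=\mu(\overline{\lambda}E)=\int_{\mathbb T}\mathbf 1_E(\lambda\zeta)\,d\mu(\zeta).$$

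\textbf{Step 1: the averaged measure.} Define
$$\nu(E):=\int_{\mathbb T}\mu_\lambda(E)\,dm(\lambda).$$
The map $(\lambda,\zeta)\mapsto \mathbf 1_E(\lambda\zeta)$ is Borel on $\mathbb T\times\mathbb T$, so $\lambda\mapsto\mu_\lambda(E)$ is measurable and Tonelli's theorem applies. Interchanging the integrals and using rotation invariance of $m$ gives
$$\nu(E)=\int_{\mathbb T}\int_{\mathbb T}\mathbf 1_E(\lambda\zeta)\,dm(\lambda)\,d\mu(\zeta)=\int_{\mathbb T}m(\overline{\zeta}E)\,d\mu(\zeta)=\mu(\mathbb T)\,m(E).$$
Since $\mu(\mathbb T)>0$, $\nu$ and $m$ have the same null sets.

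\textbf{Step 2: $\mu\ll m$.} Suppose $\mu(E)=0$. By hypothesis $\mu_\lambda\ll\mu$ for every $\lambda\in\mathbb T$, so $\mu_\lambda(E)=0$ for all $\lambda$. Hence $\nu(E)=0$, and therefore $m(E)=0$ by Step 1.

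\textbf{Step 3: $m\ll\mu$.} Suppose $m(E)=0$. Then $\nu(E)=0$, so $\mu_\lambda(E)=0$ for $m$-almost every $\lambda$. In particular $\mu_{\lambda_0}(E)=0$ for at least one $\lambda_0\in\mathbb T$. Since $\mu\ll\mu_{\lambda_0}$ by hypothesis, $\mu(E)=0$.

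Steps 2 and 3 together show that $\mu$ and $m$ are mutually absolutely continuous. The only point needing care is the measurability and Fubini justification in Step 1, which is routine. Step 3 uses only that the hypothesis holds for a single $\lambda$ outside an $m$-null set.
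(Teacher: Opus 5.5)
Your proof is correct. The paper does not prove this lemma; it quotes Bourbaki's general result that a measure on a locally compact group which is quasi-invariant under translations is equivalent to Haar measure. Your identity $\int_{\mathbb T}\mu_\lambda(E)\,dm(\lambda)=\mu(\mathbb T)\,m(E)$ comes from Tonelli and the rotation invariance of $m$. That is the standard Fubini argument behind Bourbaki's result, specialized to the compact abelian group $\mathbb T$, where Haar measure is finite and no modular function appears. The citation buys generality, while your version makes the lemma self-contained in a few lines.

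Two small points. First, your step headings are interchanged. Step 2 shows that every $\mu$-null set is $m$-null, which is $m\ll\mu$, and Step 3 shows $\mu\ll m$. Since both implications are proved, the conclusion is unaffected.

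Second, your caveat about $\mu=0$ is justified. In that case $\mu_\lambda=0$ for all $\lambda$, so the hypothesis holds trivially while the conclusion fails. The statement therefore tacitly needs $\mu\neq 0$. This holds in the paper's eventual application, where $\mu=\mu_s+m_f$ with $f\neq 0$.
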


\begin{lem}
{\rm(\cite[Proposition 7.1]{Ghara})\label{studiamath}}
     Let $\mu\in M_+(\mathbb T)$ and $\lambda\in\mathbb T.$  Let $T=(M_z, D(\mu)).$ Then the operator $\lambda T$ is unitarily equivalent to $(M_z, D(\mu_{\lambda}))$.
\end{lem}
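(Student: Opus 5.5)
The plan is to write down an explicit rotation unitary and check that it intertwines $\lambda T$ with $(M_z, D(\mu_\lambda))$. Define $U$ on $D(\mu)$ by
$$(Uf)(z)=f(\overline{\lambda}z),\quad z\in\mathbb D .$$
I will show three things: $U$ maps $D(\mu)$ isometrically into $D(\mu_\lambda)$; $U$ is onto; and $U(\lambda M_z)=M_zU$.

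\emph{Isometry.} The $H^2$ norm is invariant under rotations, so $\|Uf\|_{H^2}=\|f\|_{H^2}$. For the Dirichlet part, the first step is the identity
$$\varphi_{\mu_\lambda}(z)=\varphi_\mu(\overline{\lambda}z),\quad z\in\mathbb D.$$
To get it, note that by the definition of $\mu_\lambda$ we have $\int_{\mathbb T} h\,d\mu_\lambda=\int_{\mathbb T} h(\lambda\xi)\,d\mu(\xi)$ for bounded Borel $h$. Hence
$$\varphi_{\mu_\lambda}(z)=\int_{\mathbb T} P(z,\lambda\xi)\,d\mu(\xi).$$
Since $|\lambda\xi-z|=|\xi-\overline{\lambda}z|$ and $|z|=|\overline{\lambda}z|$, we get $P(z,\lambda\xi)=P(\overline{\lambda}z,\xi)$, which gives the identity.

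Next, $(Uf)'(z)=\overline{\lambda}f'(\overline{\lambda}z)$, so $|(Uf)'(z)|=|f'(\overline{\lambda}z)|$. The substitution $w=\overline{\lambda}z$ leaves $dA$ invariant, and therefore
$$D_{\mu_\lambda}(Uf)=\int_{\mathbb D}|f'(\overline{\lambda}z)|^2\varphi_\mu(\overline{\lambda}z)\,dA(z)=D_\mu(f).$$
So $Uf\in D(\mu_\lambda)$ and $\|Uf\|_{\mu_\lambda}=\|f\|_\mu$. The case $\mu=0$ is trivial, since then both spaces equal $H^2$ and only the $H^2$ invariance is needed.

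\emph{Surjectivity.} Note that $(\mu_\lambda)_{\overline{\lambda}}=\mu$. Applying the same computation to $f\mapsto f(\lambda\,\cdot)$ from $D(\mu_\lambda)$ to $D(\mu)$ gives an isometry that inverts $U$. Thus $U$ is a unitary from $D(\mu)$ onto $D(\mu_\lambda)$.

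\emph{Intertwining.} For $f\in D(\mu)$,
$$U(\lambda zf)(z)=\lambda\,\overline{\lambda}z\,f(\overline{\lambda}z)=z\,(Uf)(z).$$
Hence $U(\lambda T)U^*=(M_z,D(\mu_\lambda))$.

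No step is a genuine obstacle. The only point requiring care is the change-of-variables identity $\varphi_{\mu_\lambda}(z)=\varphi_\mu(\overline{\lambda}z)$, specifically keeping the direction of the rotation ($\lambda$ versus $\overline{\lambda}$) consistent with the convention $\mu_\lambda(\Delta)=\mu(\overline{\lambda}\Delta)$.
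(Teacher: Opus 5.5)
Your proof is correct and complete. The rotation $Uf=f(\overline{\lambda}\,\cdot)$ is the natural unitary, and your identity $\varphi_{\mu_\lambda}(z)=\varphi_\mu(\overline{\lambda}z)$, together with the rotation invariance of $dA$ and of the $H^2$ norm, gives the isometry, while the intertwining $U(\lambda M_z)=M_zU$ is immediate. The paper itself does not reprove this lemma but cites \cite[Proposition 7.1]{Ghara}; your argument is the standard proof of that fact, and it is consistent with the paper's later use of $(m_f)_\lambda=m_{f_\lambda}$ with $f_\lambda(z)=f(\overline{\lambda}z)$.
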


    \begin{lem}\label{weakcirleb}
           Let $\mu\in M_{+}(\mathbb T).$ If $(M_z,D(\mu))$ is weakly circular, then $\mu$ is mutually absolutely continuous with respect to $m$.
        \end{lem}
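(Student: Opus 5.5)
The plan is to chain Lemma \ref{studiamath}, Richter's quasisimilarity result \cite[Theorem 6.7]{richter1991representation} and Lemma \ref{lefthaarquas}. Suppose $T=(M_z,D(\mu))$ is weakly circular. Then for each $\lambda\in\mathbb T$, $T$ is similar to $\lambda T$. By Lemma \ref{studiamath}, $\lambda T$ is unitarily equivalent to $(M_z,D(\mu_\lambda))$. Composing the two, $(M_z,D(\mu))$ is similar to $(M_z,D(\mu_\lambda))$ for every $\lambda\in\mathbb T$.

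Similar operators are in particular quasisimilar, since an invertible intertwiner and its inverse give quasi-affinities in both directions. So \cite[Theorem 6.7]{richter1991representation} yields that $\mu$ and $\mu_\lambda$ are mutually absolutely continuous for all $\lambda\in\mathbb T$.

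If $\mu\neq 0$, Lemma \ref{lefthaarquas} then gives that $\mu$ is mutually absolutely continuous with respect to $m$. The degenerate case $\mu=0$ must be handled separately: there $D(\mu)=H^2$, and the statement should either implicitly assume $\mu\neq 0$ or treat this case as an exception, since $(M_z,H^2)$ is circular. I would note this and assume $\mu\neq0$, as Richter's theorem is stated for nonzero measures.

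The only delicate point is making sure Richter's Theorem 6.7 applies verbatim to similarity, which it does since similarity implies quasisimilarity. The rest is direct bookkeeping.
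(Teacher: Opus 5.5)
Your proof is correct and is the paper's argument: Lemma \ref{studiamath} turns weak circularity into similarity (hence quasisimilarity) of $(M_z,D(\mu))$ and $(M_z,D(\mu_\lambda))$, Richter's Theorem 6.7 then gives mutual absolute continuity of $\mu$ and $\mu_\lambda$, and Lemma \ref{lefthaarquas} finishes. Your remark about $\mu=0$ is also right, since $(M_z,H^2)$ is circular but $0$ is not mutually absolutely continuous with respect to $m$; the lemma therefore tacitly needs $\mu\neq 0$, which holds in its only application (Corollary \ref{themweaklycircular}, where $f\neq 0$).
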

        \begin{proof}
            Suppose $(M_z,D(\mu))$ is weakly circular.
            %Then $(M_z,D(\mu))$ is similar to $(e^{i\theta} M_z,D(\mu))$ for all $e^{i\theta}\in\mathbb T$. 
            Then by Lemma \ref{studiamath},  $(M_z, D(\mu))$ is similar to $(M_z, D(\mu_{\lambda}))$, for each $\lambda\in\mathbb T$. By \cite[Theorem 6.7]{richter1991representation}, $\mu$ and $\mu_\lambda$ are mutually absolutely continuous. 
By Lemma \ref{lefthaarquas}, it follows that  $\mu$ is mutually absolutely continuous with respect to $m$.
\end{proof}    

\begin{proof}[Proof of Corollary \ref{themweaklycircular}]
Suppose $\mu_s=0$ and $Z_{\mathbb T}(f)=\emptyset.$  By Proposition  \ref{simwrtzeros}, we have $D(\mu)=D$ with equivalence of norms and $(M_z, D(\mu))$ is similar to $(M_z, D),$ which is a circular operator. Hence $(M_z, D(\mu))$ is weakly circular.

  Conversely, suppose $(M_z, D(\mu))$ is weakly circular. By Lemma \ref{weakcirleb}, $\mu$ is mutually absolutely continuous with respect to $m$. Consequently, $\mu_s=0$ and $\mu=m_f.$ Let $T=(M_z, D(m_f)).$ By Lemma \ref{studiamath},  $\lambda T$ is unitarily equivalent to $(M_z, D(m_{f_{\lambda}}))$ for each $\lambda\in\mathbb T,$ where $f_\lambda(z)=f(\overline \lambda z),~z\in \mathbb D.$
   Thus $T$ is similar to $(M_z, D(m_{f_{\lambda}})),$ for each $\lambda\in\mathbb T.$ Hence, by Proposition  \ref{simwrtzeros}, $Z_\mathbb T (f)=Z_\mathbb T (f_\lambda),$ for each $\lambda \in \mathbb T.$
   Since $f$ is a non-zero function in $A(\mathbb D),$ it follows that $Z_\mathbb T(f)=\emptyset.$

 The second part  follows immediately from the proof of the first part.
   \end{proof}
   %%%%%%%%%%%%%%%%%%%%%%%%%%%%%%%%%
   %%%%%%%%%%%%%%%%%%%%%%%%%%%%%%%%%%%%%
\section{Similarity between \texorpdfstring{$(M_z,D(\mu))$}{D(mu)} and its restriction to invariant subspaces}\label{similarity invariant subspace}
In this section, we study the similarity of $(M_z, D(\mu))$ with its restriction to invariant subspaces, where  $\mu\in M_{+}(\mathbb T)$ is any measure mutually absolutely continuous with respect to $m.$ We shall make use of the following lemma to prove our theorem. 

\begin{lem}\label{invertible multiplier}
 Let $\mu\in M_+(\mathbb T)$. Suppose  $f\in \mbox{Mult}(D(\mu))$ and $\frac{1}{f}\in H^\infty.$ Then $\frac{1}{f}\in \mbox{Mult}(D(\mu)).$
\end{lem}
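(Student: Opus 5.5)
We must show that if $f\in \mbox{Mult}(D(\mu))$ and $1/f\in H^\infty$, then $1/f\in\mbox{Mult}(D(\mu))$. The plan is to control $D_\mu(g/f)$ by means of the local Dirichlet integrals $D_\zeta$. We then integrate against $\mu$, using the identity $D_\mu(h)=\int_{\mathbb T}D_\zeta(h)\,d\mu(\zeta)$ recorded after Theorem \ref{carlrep}.

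First, since $f$ is a bounded multiplier, the operator $M_f$ on $D(\mu)$ is bounded; set $C=\|M_f\|$. For $g\in D(\mu)$, put $h=g/f$, which lies in $H^2$ because $1/f\in H^\infty$. We would like $h\in D(\mu)$ with $\|h\|_\mu\lesssim\|g\|_\mu$, but the multiplier bound runs the wrong way: it gives $\|fh\|_\mu\le C\|h\|_\mu$, which only helps once $h\in D(\mu)$ is already known.

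The key pointwise estimate, which I would aim for, is
\[
D_\zeta(h)\le \|1/f\|_\infty^2\,\bigl(D_\zeta(fh)+\text{(correction)}\bigr).
\]
Via the local Douglas formula (Lemma \ref{local Douglas}(ii)), write
\[
h(\eta)-h(\zeta)=\frac{g(\eta)-g(\zeta)}{f(\eta)}+g(\zeta)\Bigl(\frac1{f(\eta)}-\frac1{f(\zeta)}\Bigr).
\]
The first term is bounded by $\|1/f\|_\infty^2 D_\zeta(g)$. The second term gives $|g(\zeta)|^2\|1/f\|_\infty^4 D_\zeta(f)$, which is not obviously integrable against $\mu$. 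This is the main obstacle: in general $|g(\zeta)|^2D_\zeta(f)$ has no reason to be $\mu$-integrable.

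To circumvent this, I would instead use an abstract argument, and I expect this to be the cleanest route. Consider the multiplication operator $M_f$ on $D(\mu)$. The adjoint satisfies $M_f^*k_w=\overline{f(w)}k_w$, where $k_w$ is the reproducing kernel. Since $|f(w)|\ge\delta:=\|1/f\|_\infty^{-1}>0$, the eigenvalues of $M_f^*$ stay away from $0$. That alone does not give invertibility, so I would instead use the complete Nevanlinna--Pick property of $D(\mu)$ (Shimorin, cited in the paper). For CNP spaces it is known that a multiplier $f$ with $|f|\ge\delta$ on $\mathbb D$ has $1/f$ a multiplier; this is a corona-type statement for one function, the one-function corona theorem for CNP spaces. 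I would either cite it or reprove it directly.

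For the direct route, the plan is as follows. First, reduce to $g$ a polynomial, by density and a closed graph argument, after which it suffices to get an a priori bound. Second, dilate: the functions $f_r(z)=f(rz)$ are analytic across $\overline{\mathbb D}$ with $|f_r|\ge\delta$, so $1/f_r$ is holomorphic on a neighbourhood of $\overline{\mathbb D}$ and hence lies in $\mbox{Mult}(D(\mu))$ by Lemma \ref{richterfuncmult}-type smoothness. Third, using $\|M_{f_r}\|\lesssim\|M_f\|$ (this needs checking), show that $\|M_{1/f_r}\|$ is uniformly bounded by a constant depending on $\|M_f\|$ and $\delta$. For that uniform bound I would apply the Pick-matrix criterion. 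Positivity of $(\|M_f\|^2-f(z)\overline{f(w)})k(z,w)$ together with $|f|\ge\delta$ should yield positivity of $(c^2-\overline{f(w)}^{-1}f(z)^{-1})k(z,w)$ through the identity
\[
\frac1{f(z)\overline{f(w)}}=\frac{1}{\|M_f\|^2}\sum_{n\ge0}\frac{\bigl(\|M_f\|^2-f(z)\overline{f(w)}\bigr)^n}{\|M_f\|^{2n}}\cdot\frac{\|M_f\|^2}{f(z)\overline{f(w)}}\cdot(\dots)
\]
and Schur products. This requires convergence, for which $|f(z)\overline{f(w)}|\ge\delta^2$ is used. Pinning down this series manipulation is where I expect the main difficulty to lie. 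Finally, let $r\to1$ and use weak compactness of the uniformly bounded multiplication operators $M_{1/f_r}$ to conclude that $1/f\in\mbox{Mult}(D(\mu))$.
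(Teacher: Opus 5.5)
There is a genuine gap. The step that carries all the weight, a bound on $\|M_{1/f}\|$ (or on $\|M_{1/f_r}\|$ uniformly in $r$), is never established.

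Invoking a ``one-function corona theorem for complete Pick spaces'' does not help. You give no reference, and whatever its status for general complete Pick spaces, specialised to $D(\mu)$ it is literally the lemma you are asked to prove.

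Your fallback Pick-matrix sketch cannot work as designed. After multiplying by the rank-one kernel $f(z)\overline{f(w)}$, positivity of $\bigl(c^2-1/(f(z)\overline{f(w)})\bigr)k(z,w)$ is equivalent to $\bigl(c^2f(z)\overline{f(w)}-1\bigr)k(z,w)\ge0$. That is the \emph{lower} bound $M_fM_f^*\ge c^{-2}I$. Your hypothesis $\bigl(\|M_f\|^2-f(z)\overline{f(w)}\bigr)k(z,w)\ge0$ is an \emph{upper} bound. Schur products with positive kernels only propagate positivity, and $|f|\ge\delta$ is merely a diagonal condition, so nothing in the sketch converts one into the other. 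Concretely:
\begin{itemize}
\item the Neumann series in $1-f(z)\overline{f(w)}/\|M_f\|^2$ diverges whenever $f(z)\overline{f(w)}<0$, which is allowed (e.g.\ $f=e^{i\pi z}$ at $z=\tfrac12$, $w=-\tfrac12$);
\item its terms times $k$ are not positive kernels for $n\ge2$;
\item even a convergent expansion into positive kernels would only show that $k(z,w)/(f(z)\overline{f(w)})$ is positive, which is trivial, not that $c^2k(z,w)-k(z,w)/(f(z)\overline{f(w)})$ is.
\end{itemize}
The estimate $\|M_{f_r}\|\lesssim\|M_f\|$ is also left unproved.

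The missing idea is elementary: use the multiplier hypothesis on $g$ rather than on $h=g/f$. For $g\in D(\mu)$ you know $fg\in D(\mu)$, and since $f'g=(fg)'-fg'$,
\[
\Bigl(\frac gf\Bigr)'=\frac{fg'-f'g}{f^2}=\frac{2fg'-(fg)'}{f^2}.
\]
Since $\|f\|_\infty<\infty$ (because $\mbox{Mult}(D(\mu))\subseteq H^\infty$), this gives
\[
D_\mu\Bigl(\frac gf\Bigr)\le 2\Bigl\|\frac1f\Bigr\|_\infty^4\bigl(4\|f\|_\infty^2D_\mu(g)+D_\mu(fg)\bigr)<\infty,
\]
while $g/f\in H^2$ because $1/f\in H^\infty$. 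This is exactly the paper's proof.

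Your first, local attempt was one rearrangement away from the same argument. Rather than expanding the troublesome term through $D_\zeta(f)$, write
\[
g(\eta)f(\zeta)-g(\zeta)f(\eta)=\bigl[(fg)(\zeta)-(fg)(\eta)\bigr]+\bigl[f(\zeta)+f(\eta)\bigr]\bigl[g(\eta)-g(\zeta)\bigr].
\]
After a routine check that the relevant boundary values exist for $\mu$-a.e.\ $\zeta$, this gives $D_\zeta(g/f)\le 2\|1/f\|_\infty^4\bigl(D_\zeta(fg)+4\|f\|_\infty^2D_\zeta(g)\bigr)$. Both terms on the right are $\mu$-integrable.
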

\begin{proof}
    Let $g\in D(\mu).$  It suffices to show that $D_{\mu}(\frac{g}{f})<\infty.$ Since $f\in \mbox{Mult}(D(\mu))$, we have $f\in H^\infty$. Note that
    
    \begin{align}\notag
        D_\mu\left(\frac{g}{f}\right)=\int_{\mathbb D}|\left(\frac{g}{f}\right)'|^2\varphi_{\mu}(z)dA(z)
        &=\int_{\mathbb D}\left|\frac{fg'-f'g}{f^2}\right|^2\varphi_\mu(z)dA(z)\\\notag
        &=\int_{\mathbb D}\left|\frac{2fg'-(fg)'}{f^2}\right|^2\varphi_\mu(z)dA(z)\\\notag
        &\lesssim \int_{\mathbb D}|{2fg'-(fg)'}|^2\varphi_\mu(z)dA(z)\\\notag
        &\lesssim\int_{\mathbb D}|f|^2|g'|^2\varphi_\mu(z)dA(z)+ \int_{\mathbb D}|{(fg)'}|^2\varphi_\mu(z)dA(z)\\\notag
        &\lesssim D_{\mu} (g)+ D_\mu (fg)<\infty.
        \end{align}
        Hence $\frac{1}{f}\in \mbox{Mult}(D(\mu))$.
\end{proof}

We are now ready to prove Theorem \ref{invsubrestchar}. The proof is motivated by \cite[Proposition 7.3]{richter1991representation}. Note that, since multiplication by an inner function increases the $D(\mu)$ norm (see \eqref{mu norm increase}), $\varphi D(\mu)$ is a closed invariant subspace of $(M_z, D(\mu))$ for any inner function $\varphi\in\mbox{Mult}(D(\mu)).$ 
\begin{proof}[Proof of Theorem \ref{invsubrestchar}]
    Suppose $\mathcal M=\varphi D(\mu)$, where $\varphi$ is an inner function in $\mbox{Mult}(D(\mu)).$ Then one can easily verify that the operator of multiplication by $\varphi$ serves as an invertible intertwiner between $M_z|_{\mathcal M}$ and $(M_z,D(\mu))$. 
    
    To prove the converse, assume that $M_z|_{\mathcal M}$ is similar to $(M_z,D(\mu)).$ Thus, by standard arguments, the similarity must be given by an invertible multiplication operator $M_\psi:D(\mu)\to \mathcal M.$  This yields $\psi\in \mbox{Mult}(D(\mu))$ and $\mathcal M=\psi D(\mu).$  Let $\psi=\psi_{i}\psi_{o}$ be the inner-outer factorization of $\psi.$ 
    By \eqref{mu norm increase},  it follows that $\psi_{o}\in \mbox{Mult}(D(\mu)).$ 
     Since $\psi\in \mbox{Mult}(D(\mu))$ and $M_\psi$ is bounded below, by \cite[Corollary 6.6]{richter1991representation}, $\frac{1}{\psi}\in L^{\infty}(\mu)=L^{\infty}(m).$ Moreover, since 
    $|\psi|=|\psi_{o}|~m$-a.e. on $\mathbb T,$ we have $\frac{1}{\psi_{o}}\in L^{\infty}(m).$ By Smirnov's maximum principle (see \cite[Theorem 2.11]{duren1970theory}), $\frac{1}{\psi_o}\in H^\infty.$ Since, $\psi_o\in \mbox{Mult}(D(\mu))$ and $\frac{1}{\psi_{o}}\in H^\infty,$  by Lemma \ref{invertible multiplier}, $\frac{1}{\psi_o}\in\mbox{Mult}(D(\mu)).$ 
    Therefore, $M_{\psi_o}$ is an invertible operator on $D(\mu)$, and $\psi_o D(\mu)=D(\mu).$
    We also have $\psi_{i}=\frac{\psi}{\psi_{o}}\in \mbox{Mult}(D(\mu)).$ 
   Consequently, $$\mathcal M=\psi D(\mu)=\psi_i \psi_{o}D(\mu)=\psi_i D(\mu),$$ 
 with $\psi_i \in \mbox{Mult}(D(\mu)).$ This completes the proof.
\end{proof}
%%%%%%%%%%%%%%%%%%%%%%%%%%%%
%%%%%%%%%%%%%%%%%%%%%%%%%%
%%%%%%%%%%%%%%%%%%%%%%%%%%%%%%%%%%%%%%%%%
    \section{A class of closed invariant subspaces of the Dirichlet space}\label{section invsubspace}
    In this section, we provide a proof of Theorem \ref{thminvsubspace}. 
    We shall make use of the following lemma.
    \begin{lem}\label{notcyclic}
        Let $\lambda\in\mathbb T$. There does not exist any $C>0$ such that 
        \begin{equation}\label{eqninvariant}
            \|p\|_{H^2} \leqslant C \|(z-\lambda)p\|_D~\mbox{ for all polynomials } p. 
        \end{equation}
    \end{lem}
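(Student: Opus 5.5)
The plan is to argue by contradiction: I would exhibit an explicit sequence of polynomials $p_n$ for which $\|(z-\lambda)p_n\|_D$ stays bounded while $\|p_n\|_{H^2}\to\infty$. The idea is that $1/(z-\lambda)\notin H^2$, yet $1$ lies in the $D$-closure of $(z-\lambda)\mathbb C[z]$, since $z-\lambda$ is cyclic in $D$. I would make this quantitative with Fejér-type averages of the geometric series for $1/(z-\lambda)$.

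Write $w=\overline{\lambda}z$. For $k\geqslant 1$ we have $1-w^k=(1-w)\sum_{j=0}^{k-1}w^j$ and $1-w=-\overline{\lambda}(z-\lambda)$. Averaging over $k=1,\dots,n$, I define
$$p_n(z)=-\overline{\lambda}\,\frac1n\sum_{k=1}^{n}\sum_{j=0}^{k-1}w^j=-\overline{\lambda}\sum_{j=0}^{n-1}\Big(1-\frac{j}{n}\Big)w^j .$$
Then
$$(z-\lambda)p_n=\frac1n\sum_{k=1}^n(1-w^k)=1-\frac1n\sum_{k=1}^n \overline{\lambda}^k z^k .$$

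Next I would compute both norms from the coefficients, using $\|h\|_D^2=\sum_k (k+1)|\hat h(k)|^2$ and $\|h\|_{H^2}^2=\sum_k|\hat h(k)|^2$.
\begin{itemize}
\item On the $D$ side,
$$\|(z-\lambda)p_n\|_D^2=1+\frac{1}{n^2}\sum_{k=1}^n (k+1)=1+\frac{(n+1)(n+2)/2-1}{n^2},$$
which stays bounded (it tends to $3/2$).
\item On the $H^2$ side,
$$\|p_n\|_{H^2}^2=\sum_{j=0}^{n-1}\Big(1-\frac jn\Big)^2=\frac{1}{n^2}\sum_{i=1}^{n}i^2\sim \frac n3\longrightarrow\infty .$$
\end{itemize}
Hence $\|p_n\|_{H^2}/\|(z-\lambda)p_n\|_D\to\infty$, so no constant $C$ can satisfy \eqref{eqninvariant}.

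The only real obstacle is choosing the right test polynomials. The naive truncations $(1-w^n)/(z-\lambda)$ fail, because there both $\|(z-\lambda)p_n\|_D$ and $\|p_n\|_{H^2}$ grow like $\sqrt n$. The Cesàro averaging is what damps the $D$-norm of the tail while keeping the $H^2$-mass of $p_n$ growing. Once this choice is made, the rest is the routine coefficient computation above.

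As an alternative, more conceptual route, I would suppose \eqref{eqninvariant} holds and use the cyclicity of $z-\lambda$ in $D$ (a polynomial with no zeros in $\mathbb D$) to pick polynomials $p_n$ with $(z-\lambda)p_n\to1$ in $D$. By \eqref{eqninvariant}, $(p_n)$ is then Cauchy in $H^2$, so $p_n\to q$ in $H^2$. Passing to the limit in $H^2$ gives $(z-\lambda)q=1$, i.e. $q=1/(z-\lambda)\in H^2$, which is false.
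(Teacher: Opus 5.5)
Your proof is correct, and your primary argument takes a different route from the paper's. The paper's proof is exactly your ``alternative'' route. It invokes the cyclicity of $z-\lambda$ in $D$ (Brown--Shields) to get polynomials with $(z-\lambda)p_n\to 1$ in $D$. The assumed inequality then makes $(p_n)$ Cauchy in $H^2$, and pointwise evaluation gives $(w-\lambda)h(w)=1$ with $h\in H^2$, which is impossible.

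Your Fej\'er-type polynomials instead settle the lemma by direct computation. I checked the coefficient identity $(z-\lambda)p_n=1-\frac1n\sum_{k=1}^n\overline{\lambda}^k z^k$ and both norm computations:
\begin{equation*}
\|(z-\lambda)p_n\|_D^2=1+\frac{n+3}{2n}\longrightarrow \frac32,
\qquad
\|p_n\|_{H^2}^2=\frac{(n+1)(2n+1)}{6n}\longrightarrow\infty .
\end{equation*}
Both are right, as is your observation that plain truncations fail.

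The explicit route is self-contained, since it needs no cyclicity theorem. It is also quantitative: any constant valid for polynomials of degree less than $n$ must be at least of order $\sqrt n$.

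Your sequence $(z-\lambda)p_n$ does not converge to $1$ in $D$-norm. The tail $\frac1n\sum_{k=1}^n\overline{\lambda}^k z^k$ has $D$-norm tending to $1/\sqrt2$, so the convergence is only bounded and weak. That is harmless here, because the lemma only needs bounded $D$-norms with exploding $H^2$-norms.

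The paper's soft argument is shorter given the cited lemma. It also applies verbatim to any cyclic $\phi\in D$ with $1/\phi\notin H^2$, whereas your test polynomials are tailored to the linear factor $z-\lambda$.
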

    \begin{proof}
        If possible, assume that there exists a $C>0$ such that \eqref{eqninvariant} holds for all polynomials $p$. Since $(z-\lambda)$ is cyclic in $D$ (see \cite[Lemma 8]{brown1984cyclic}), there exists a sequence of polynomials $\{p_n\}$ such that $(z-\lambda)p_n\to1$ in $D$. Thus $\{p_n\}$ is Cauchy in $H^2$ by \eqref{eqninvariant}, and therefore there exists a $h\in H^2$ such that $p_n \to h $ in $H^2.$  Since evaluations are continuous in $H^2$ and $D$, it follows that $(w-\lambda)h(w)=1$, $w\in\mathbb D.$ This is a contradiction as $h\in H^2.$
\end{proof}

\begin{proof}[Proof of Theorem \ref{thminvsubspace}]
We first consider the case $Z_{\mathbb T}(f)=\emptyset$. Assume that $\mbox{supp}(\sigma)=Z_{\mathbb  T}(f).$ Thus $g=f$ and $gD(m_g)=fD(m_f)$. Moreover, by Lemma \ref{RichterequalDspaces}, it is easy to verify that $gD(m_g)= BD$ for some finite Blaschke product $B$ and therefore it is closed. Conversely, if $gD(m_g)$ is a closed invariant subspace of $(M_z,D),$ then $g=fS_\sigma\in D$ and by Lemma \ref{prop 5.4} and Remark \ref{suppsigmainzeroes}, $\mbox{supp}(\sigma)=\emptyset.$  Hence the theorem follows.

Now we assume that $f$ has at least one root on $\mathbb T$. Suppose all the roots of $f$ on $\mathbb T$ are simple and $\mbox{supp}(\sigma)=Z_{\mathbb{T}}(f)$. Let $$f(z)=(z-e^{i\phi_1})(z-e^{i\phi_2})\ldots(z-e^{i\phi_n})h(z),$$ where $\phi_1,\ldots,\phi_n$ are distinct real numbers in $[0,2\pi)$ and $h\in\mathcal{O}(\overline{\mathbb D})$ with $Z_\mathbb{T}(h)=\emptyset$. Note that $m_f=m_g$. Application of  Theorem \ref{singmultiff} and Lemma \ref{richterlipschitz} yields $gD(m_g)\subseteq D$. This together with the fact that the polynomials are dense in $D(m_g)$, it is easily verified that $gD(m_g)$ is closed in $D$ if 
\begin{equation}\label{closedinvequnpoly}
    \|q\|_{m_g}\lesssim\|g q\|_D, \text{  for all polynomials } q.
\end{equation}
Let $\sigma=\sum_{j=1}^{n}c_j\delta_{e^{i\phi_j}}$  with $c_j>0$ for $j=1,\ldots,n.$ By Theorem \ref{carlrep}, 
\begin{align}\notag
    D(gq)=D(fS_\sigma q)&= D(fq)+2\int_\mathbb{T}\int_\mathbb T  \frac{|f(\zeta)|^2|q(\zeta)|^2}{|\zeta-e^{i\phi}|^2}d\sigma(e^{i\phi})dm(\zeta)\\
    &=    D(fq)+ 2\sum_{j=1}^nc_j\int_\mathbb T  \frac{|f(\zeta)|^2|q(\zeta)|^2}{|\zeta-e^{i\phi_j}|^2}dm(\zeta)\label{eqninvariant2}.
\end{align}
We claim that there exists a $C>0$ such that $\sum_{j=1}^n c_j \frac{|f(\zeta)|^2}{|\zeta-e^{i\phi_j}|^2} > C$ for all $\zeta\in \mathbb T.$ If not, then there exists a $\theta_0\in [0,2\pi)$ such that 
\[
|h(e^{i\theta_0})|^2\big(\sum_{j=1}^n c_j \prod_{k \neq j} |e^{i\theta_0} - e^{i\phi_k}|^2\big) = 0. \quad 
\]
Since $\mathbb Z_{\mathbb T}(h)=\emptyset$, it follows that 
    $ \prod_{k \neq j} |e^{i\theta_0} - e^{i\phi_k}|^2=0$ for all $j=1,\ldots,n.$ This implies that $\theta_0\in \{\phi_1,\ldots,\phi_n\}\setminus \{\phi_j\}$ for all $j$. This is a contradiction since $\phi_1,\ldots,\phi_n$ are distinct. Hence the claim is established.
    Now, from \eqref{eqninvariant2} and Theorem \ref{lemma4.4}, for all polynomials $q$, we have 
\begin{align*}
    \|gq\|^2_{D}=\|fS_\sigma q\|^2_D\geqslant D(fS_{\sigma}q)&\geqslant D(fq)+ C\|q\|^2_{H^2}\\ &\gtrsim D(fq)+ \|q\|^2_{H^2}\\
   & \gtrsim \|q\|^2_{m_f}\\
   &= \|q\|^2_{m_g},
\end{align*}  
establishing \eqref{closedinvequnpoly}.

To prove the forward implication, suppose that  $g D(m_g)$ is a closed invariant subspace of $(M_z,D).$  Note that since $g\in D,$ by Lemma \ref{prop 5.4} and Remark \ref{suppsigmainzeroes}, we have $\mbox{supp}(\sigma)\subseteq Z_{\mathbb T}(f).$
If possible, assume that all the roots of $f$ on $\mathbb T$ are not simple. Let 
\begin{equation}\label{polyform}
    f(z)=(z-e^{i\phi_1})^{m_1}(z-e^{i\phi_2})^{m_2}\ldots(z-e^{i\phi_n})^{m_n}h(z),
\end{equation} where $\phi_1,\ldots,\phi_n$ are distinct real numbers in $[0,2\pi)$, $h\in \mathcal O(\overline{\mathbb D}),$  $Z_{\mathbb T}(h)=\emptyset,$ and $m_1,\ldots,m_n \geqslant 1$ are positive integers with at least one $m_{j}\geqslant 2$, say  $m_{\ell}.$
Then for all $j=1,\ldots, n$, we have
\begin{equation}\label{eqnextrafactor}
    \frac{|f(\zeta)|^2}{|\zeta-e^{i\phi_j}|^2}=|\zeta-e^{i\phi_\ell}|^2|f_j(\zeta)|^2,~\zeta\in\mathbb T
\end{equation}
for some $f_j\in \mathcal{O}(\overline{\mathbb D})$. Let  $\sigma=\sum_{j=1}^n c_j\delta_{e^{i\phi_j}}$, where $c_j\geqslant 0.$ 
Applying Theorem \ref{carlrep}, we have
$$D(gq)=D(fS_\sigma q)=D(fq)+2\sum_{j=1}^{n}c_j\int_\mathbb{T}   \frac{|f(\zeta)|^2|q(\zeta)|^2}{|\zeta-e^{i\phi_j}|^2}dm(\zeta). $$
Combining this with \eqref{eqnextrafactor}, we get
\begin{align}
   \notag D(fS_\sigma q)& \leqslant D(fq)+\Big(2\sum_{j=1}^n c_j \|f_j\|_{\infty}^2\Big) \int_\mathbb T |\zeta-e^{i\phi_\ell}|^2|q(\zeta)|^2dm(\zeta),\\
    &\lesssim D(fq)+\|(z-e^{i\phi_\ell})q\|^2_{H^2}.\label{Singcycliccont}
\end{align}
Since $gD(m_g)$  is a closed invariant subspace of $D$, for all polynomials $q$, we have  
\begin{equation*}
    \|q\|^2_{m_g}\lesssim\|g q\|^2_D=\|gq\|^2_{H^2}+D(g q)=\|fq\|^2_{H^2}+D(fS_\sigma q).
\end{equation*}
This together with \eqref{Singcycliccont} and Theorem \ref{lemma4.4} yields, for all polynomials $q$
\begin{align}\label{closed}
 \|q\|_{m_g}^2=\|q\|_{m_f}^2= \|q\|^2_{H^2}+ D(fq)\lesssim \|fq\|^2_{D}+\|(z-e^{i\phi_\ell})q\|^2_{H^2}.
\end{align} 
Since $\frac{f}{z-e^{i\phi_\ell}}\in \mbox{Mult}(D),$ we get
\begin{equation}
    \|fq\|^2_D\lesssim\|(z-e^{i\phi_\ell})q\|^2_D, \text{ for all polynomials $q$}.\label{A2}
\end{equation}
Also, since $D$ is contractively contained in $H^2$, we have
\begin{equation}
   \|(z-e^{i\phi_\ell})q\|^2_{H^2}\leqslant \|(z-e^{i\phi_\ell})q\|^2_{D}, \text{ for all polynomials $q$}.\label{A3}
\end{equation}
Using \eqref{A2} and \eqref{A3} in \eqref{closed}, we have for all polynomials $q,$
$$\|q\|^2_{H^2}\lesssim \|(z-e^{i\phi_\ell})q\|^2_D,$$ which is a contradiction by Lemma \ref{notcyclic}. Hence all the roots of $f$ on $\mathbb T$ are simple.

To complete the proof, let us now assume that $\mbox{supp}(\sigma)\subsetneq Z_{\mathbb{T}}(f).$ 
Suppose $f$ is of the form \eqref{polyform}, where $\phi_1,\ldots,\phi_n$ are distinct real numbers in $[0,2\pi)$,  $Z_{\mathbb T}(h)=\emptyset$ and $m_1,\ldots,m_n \geqslant 1$ are positive integers.
 Since $\mbox{supp}(\sigma)\subsetneq Z_{\mathbb{T}}(f),$ there exists $1\leqslant\ell\leqslant n$ such that $e^{i\phi_\ell}\notin \mbox{supp}(\sigma).$
Let $\sigma=\sum_{j=1,j\neq \ell}^nc_j\delta_{e^{i\phi_j}},$ where $c_j\geq 0, j\neq \ell.$ Then for all $j\neq \ell$, we have 
\begin{equation*}
    \frac{|f(\zeta)|^2}{|\zeta-e^{i\phi_j}|^2}=|\zeta-e^{i\phi_\ell}|^2|f_j(\zeta)|^2,~\zeta\in\mathbb T,
\end{equation*}
for some  $f_j\in \mathcal{O}(\overline{\mathbb D}).$
By repeating the previous argument, we arrive at a contradiction. Hence $\mbox{supp}(\sigma)=Z_{\mathbb T}(f).$ This completes the proof.
\end{proof}

\begin{cor}
    Let $f\in \mathcal O(\overline{\mathbb D})$ be a function with simple roots on $\mathbb T$. Then there exists a closed invariant subspace $\mathcal M$ of $(M_z, D)$ such that $(M_z, D(m_f))$ is similar to  $M_z|_{\mathcal M}$. 
\end{cor}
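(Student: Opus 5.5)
The plan is to build $\mathcal M$ directly from Theorem \ref{thminvsubspace}. If $Z_{\mathbb T}(f)=\emptyset$, then Lemma \ref{RichterequalDspaces} gives $D(m_f)=D$ with equivalence of norms, because $|f|$ is bounded above and below on $\mathbb T$. In that case we simply take $\mathcal M=D$; the identity map then intertwines $(M_z,D(m_f))$ with $(M_z,D)$.

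Otherwise, let $Z_{\mathbb T}(f)=\{e^{i\phi_1},\ldots,e^{i\phi_n}\}$. This set is finite because $f\in\mathcal O(\overline{\mathbb D})$ and $f$ is not identically zero. Put $\sigma=\sum_{j=1}^n\delta_{e^{i\phi_j}}$ and $g=fS_\sigma$. Then $\mbox{supp}(\sigma)=Z_{\mathbb T}(f)$ and all roots of $f$ on $\mathbb T$ are simple, so Theorem \ref{thminvsubspace} shows that $\mathcal M:=gD(m_g)$ is a closed invariant subspace of $(M_z,D)$. Since $|S_\sigma|=1$ $m$-a.e., we have $m_g=m_f$, and hence $\mathcal M=gD(m_f)$.

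Next, consider $T:D(m_f)\to\mathcal M$ given by $Th=gh$. It is linear and surjective by construction, and injective because $g\not\equiv 0$. It intertwines the two operators: $TM_z=M_zT$. The proof of Theorem \ref{thminvsubspace} already supplies the lower bound $\|q\|_{m_g}\lesssim\|gq\|_D$ on polynomials, inequality \eqref{closedinvequnpoly}.

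For the matching upper bound, note that $f\in\mathcal A_1$ because $f'$ is bounded. Theorem \ref{singmultiff} then gives $S_\sigma\in\mbox{Mult}(D(m_f))$, and Corollary \ref{containment of Mult(D)} gives $fS_\sigma\in\mbox{Mult}(D(m_f),D)$. Alternatively, combine Lemma \ref{richterlipschitz} with the closed graph theorem to see that $M_g:D(m_f)\to D$ is bounded. Since polynomials are dense in $D(m_f)$, the two-sided estimate extends from polynomials to all of $D(m_f)$. Thus $T$ is a bounded invertible operator onto $\mathcal M$, and $(M_z,D(m_f))$ is similar to $M_z|_{\mathcal M}$.

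The only delicate point is to make sure the two-sided norm estimate really holds on all of $D(m_f)$ and not just on polynomials. Given boundedness of $M_g$ and density of polynomials, this is a routine limiting argument, and the rest is bookkeeping.
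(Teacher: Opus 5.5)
Your proof is correct and follows the paper's argument. You apply Theorem \ref{thminvsubspace} with a singular measure $\sigma$ supported exactly on $Z_{\mathbb T}(f)$, and you use $M_{fS_\sigma}:D(m_f)\to\mathcal M$ as the invertible intertwiner; your two-sided estimate (lower bound from \eqref{closedinvequnpoly}, upper bound from boundedness of $M_{fS_\sigma}:D(m_f)\to D$, extended by density of polynomials) fills in details the paper leaves implicit. The separate treatment of $Z_{\mathbb T}(f)=\emptyset$ is harmless but unnecessary, since $\sigma=0$ is allowed. Also, $fS_\sigma\in\mbox{Mult}(D(m_f),D)$ comes from the proof of Corollary \ref{containment of Mult(D)} (via Proposition \ref{wienermult1}(i)), not its statement, though your closed-graph alternative covers this anyway.
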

\begin{proof}
By Theorem \ref{thminvsubspace}, $\mathcal M:=fS_{\sigma}D(m_f)$ is a closed invariant subspace of $(M_z, D)$, where $\sigma$ is any singular measure with $\mbox{supp}(\sigma)=Z_{\mathbb T}(f)$. The similarity of  $M_z|_{\mathcal M}$ and $(M_z, D(m_f))$ now follows as the operator $M_{fS_{\sigma}}:D(m_f)\to \mathcal M$ serves as an invertible intertwiner. 
\end{proof}
%%%%%%%%%%%%%%%%%%%%%%%%
%%%%%%%%%%%%%%%%%%%%%%%%
\section{Concluding remarks}\label{concluding remarks}
We finish this paper by showing that the converse of Theorem \ref{diffmultalg}\rm(ii) is not true in general, that is, for $f,g\in \mathcal A_1$, $Z_{\mathbb T}(f)=Z_{\mathbb T}(g)$ may not imply that $\mbox{Mult}(D(m_f))\cap \mathcal B$ and $\mbox{Mult}(D(m_g))\cap\mathcal B$ are equal. To see this, we recall the definition of  \emph{Stolz-type} regions $S_{\kappa, \gamma}(\zeta)$ (see \cite[p. 126]{AhernClark}). For $\zeta \in \mathbb T$, and  $\gamma,\kappa\in [1,\infty),$  the Stolz-type region is defined by
\begin{equation*}
S_{\kappa, \gamma}(\zeta):=\{z\in \mathbb D: |\zeta-z|^\gamma\leqslant \kappa(1-|z|)\}. 
\end{equation*}

We will need the following generalization of Lemma \ref{BlaschkeinMult}. 
 \begin{lem}{\rm(\cite[Theorem 1]{matstud})}\label{HO_BlaschkeinMult}
     Let $\zeta\in\mathbb T,\gamma,\kappa\in[1,\infty). $ Let $B$ be a Blaschke product with zeros $\{z_n\}$ such that $\{z_n\}\subseteq S_{\kappa, \gamma}(\zeta).$ Then there exists a $C>0$ such that
     $$|B'(z)|\leqslant \frac{C}{|\zeta-z|^{2\gamma}}, ~z\in \mathbb D.$$
    
 \end{lem}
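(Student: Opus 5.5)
The plan is to bound $|B'|$ termwise by the derivatives of the individual Blaschke factors, and then to control each denominator $|1-\overline{z_n}z|$ from below by $|\zeta-z|^{\gamma}$, using the geometry of $S_{\kappa,\gamma}(\zeta)$. This is the same mechanism that underlies Lemma \ref{BlaschkeinMult} (the case $\gamma=1$).

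\emph{Step 1 (termwise bound).} Write $B=\gamma_0\prod_n b_n$ with $b_n(z)=\frac{|z_n|}{z_n}\frac{z_n-z}{1-\overline{z_n}z}$. Since the product converges locally uniformly on $\mathbb D$, we may differentiate it to get
$$B'=\gamma_0\sum_n b_n'\prod_{k\neq n}b_k .$$
Each $|b_k|\leqslant 1$ and $|b_n'(z)|=\frac{1-|z_n|^2}{|1-\overline{z_n}z|^2}$. Hence
$$|B'(z)|\leqslant \sum_n\frac{1-|z_n|^2}{|1-\overline{z_n}z|^2},\qquad z\in\mathbb D .$$

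\emph{Step 2 (lower bound on the denominators).} This is the only real point of the proof. We use two elementary facts:
\begin{itemize}
\item[(a)] $|1-\overline{z_n}z|^2-|z-z_n|^2=(1-|z|^2)(1-|z_n|^2)\geqslant 0$, so $|1-\overline{z_n}z|\geqslant |z-z_n|$;
\item[(b)] $|1-\overline{z_n}z|\geqslant 1-|z_n|$.
\end{itemize}
Because $z_n\in S_{\kappa,\gamma}(\zeta)$, fact (b) gives $1-|z_n|\geqslant \kappa^{-1}|\zeta-z_n|^\gamma$. Next, since $\gamma\geqslant 1$ and $|z-z_n|\leqslant 2$, we have $|z-z_n|\geqslant 2^{1-\gamma}|z-z_n|^\gamma$. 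Combining these with convexity of $t\mapsto t^\gamma$, we obtain
$$|1-\overline{z_n}z|\geqslant \tfrac12\big(|z-z_n|+1-|z_n|\big)\gtrsim |z-z_n|^\gamma+|\zeta-z_n|^\gamma\gtrsim |\zeta-z|^\gamma .$$
All implied constants depend only on $\kappa$ and $\gamma$, and not on $n$ or $z$.

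\emph{Step 3 (conclusion).} Inserting the bound of Step 2 into Step 1 gives
$$|B'(z)|\lesssim \frac{\sum_n(1-|z_n|^2)}{|\zeta-z|^{2\gamma}} .$$
The numerator is finite by the Blaschke condition, so we may take $C$ to be a constant multiple of $\sum_n(1-|z_n|^2)$.

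I expect no serious obstacle. The only care needed is in Step 2: the lower bound must be uniform in both $n$ and $z$, and this is exactly where the hypothesis $\gamma\geqslant1$ enters, through the comparison $|z-z_n|\gtrsim|z-z_n|^\gamma$.
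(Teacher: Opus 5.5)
Your proof is correct. It is also genuinely different from the paper, which gives no argument for this lemma. It quotes the estimate from Theorem 1 of the cited Mat.\ Stud.\ paper, just as the case $\gamma=1$ (Lemma \ref{BlaschkeinMult}) is taken from earlier literature.

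You give a self-contained derivation in two parts:
\begin{itemize}
\item the termwise bound $|B'(z)|\leqslant\sum_n(1-|z_n|^2)/|1-\overline{z_n}z|^2$, justified by bounding the derivatives of the partial products and passing to the locally uniform limit;
\item the uniform lower bound $|1-\overline{z_n}z|\geqslant\frac12\bigl(|z-z_n|+1-|z_n|\bigr)\gtrsim|z-z_n|^\gamma+|\zeta-z_n|^\gamma\gtrsim|\zeta-z|^\gamma$.
\end{itemize}
Each step of the lower bound holds, with constants depending only on $\kappa$ and $\gamma$:
\begin{itemize}
\item the identity $|1-\overline{z_n}z|^2-|z-z_n|^2=(1-|z|^2)(1-|z_n|^2)$;
\item the region condition $|\zeta-z_n|^\gamma\leqslant\kappa(1-|z_n|)$;
\item the inequality $|z-z_n|\geqslant 2^{1-\gamma}|z-z_n|^\gamma$, which uses $\gamma\geqslant 1$;
\item the convexity estimate $(a+b)^\gamma\leqslant 2^{\gamma-1}(a^\gamma+b^\gamma)$.
\end{itemize}
One small point: the inequality $1-|z_n|\geqslant\kappa^{-1}|\zeta-z_n|^\gamma$ comes straight from the definition of $S_{\kappa,\gamma}(\zeta)$, not from your fact (b).

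Your route buys transparency and an explicit constant, $C=c(\kappa,\gamma)\sum_n(1-|z_n|^2)$. It also recovers Lemma \ref{BlaschkeinMult} as the case $\gamma=1$ at no extra cost. The bound is crude: keeping only $|\zeta-z|^\gamma$ throws away the information carried by $|z-z_n|$ and $|\zeta-z_n|$. That does not matter here, because the paper only uses the lemma to get $(z-\zeta)^{2m}B'\in H^\infty$ in Proposition \ref{HOB_D(mp)}, and your estimate gives exactly that. The paper's citation keeps the text short but leaves an elementary step as a black box.
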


\begin{prop}\label{HOB_D(mp)}
    Let $m\geqslant 1 $ be a positive integer, $\zeta\in\mathbb T$ and $p$ be a polynomial such that $p^{(k)}(\zeta)=0$ for $k=0,1,\ldots,m-1$. Let $B$ be a Blaschke product with zeros $\{z_n\}$ such that $\{z_n\}\subseteq S_{\kappa, m}(\zeta).$ Then $B\in \mbox{Mult}(D(m_p)).$
\end{prop}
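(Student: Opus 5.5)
The plan is to mimic the proof of Proposition \ref{|B'(z)| bound} and reduce everything to Proposition \ref{sufficient}. That result says that if $f\in H^2$, $\varphi$ is inner and $f^2\varphi'\in H^\infty$, then $\varphi\in \mbox{Mult}(D(m_f))$. I will apply it with $f=p$ and $\varphi=B$, so it suffices to show $p^2B'\in H^\infty$.

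First, since $p$ is a polynomial with $p^{(k)}(\zeta)=0$ for $k=0,\ldots,m-1$, the point $\zeta$ is a root of $p$ of multiplicity at least $m$. Hence $p(z)=(z-\zeta)^m q(z)$ for some polynomial $q$, which is bounded on $\overline{\mathbb D}$. Therefore
$$|p(z)|^2\leqslant \|q\|_\infty^2\,|z-\zeta|^{2m},\quad z\in\mathbb D.$$
If $p\equiv 0$ there is nothing to prove, since then $D(m_p)=H^2$ and every inner function is a multiplier of $H^2$.

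Second, since the zeros of $B$ lie in $S_{\kappa,m}(\zeta)$, Lemma \ref{HO_BlaschkeinMult} with $\gamma=m$ gives a constant $C>0$ such that
$$|B'(z)|\leqslant \frac{C}{|\zeta-z|^{2m}},\quad z\in\mathbb D.$$
Multiplying the two estimates yields
$$|p(z)^2B'(z)|\leqslant C\|q\|_\infty^2,\quad z\in\mathbb D,$$
so $p^2B'\in H^\infty$. Proposition \ref{sufficient} then gives $B\in\mbox{Mult}(D(m_p))$.

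There is essentially no obstacle here. The only point to check is that the exponent $2\gamma$ in Lemma \ref{HO_BlaschkeinMult} matches exactly the vanishing order $2m$ of $|p|^2$ at $\zeta$, which it does because we take $\gamma=m$.
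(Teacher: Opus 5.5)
Your proposal is correct and matches the paper's proof: you use the vanishing of $p$ to order $m$ at $\zeta$ to get $p^2/(z-\zeta)^{2m}\in H^\infty$, then Lemma~\ref{HO_BlaschkeinMult} with $\gamma=m$ to get $(z-\zeta)^{2m}B'\in H^\infty$, so $p^2B'\in H^\infty$, and Proposition~\ref{sufficient} finishes the argument. Your separate treatment of $p\equiv 0$ is harmless, since Proposition~\ref{sufficient} already covers that case.
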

\begin{proof}
   By assumption, $\frac{p^2}{(z-\zeta)^{2m}}\in H^\infty.$ By Lemma \ref{HO_BlaschkeinMult}, $(z-\zeta)^{2m}B'\in H^\infty.$ Consequently, $p^2B'\in H^\infty,$ and therefore, by Proposition \ref{sufficient}, $B\in\mbox{Mult}(D(m_p)).$
\end{proof}

For any $\zeta\in \mathbb T,$ consider $f(z)=z-\zeta$ and $g(z)=(z-\zeta)^4.$ Clearly, $Z_{\mathbb T}(f)=Z_{\mathbb T}(g).$
Let $B$  be an infinite Blaschke product with zeros $z_n=\zeta(1-\frac{1}{n^2})e^{\frac{i}{\sqrt{n}}}, n\in\mathbb N.$ By a straightforward computation we obtain 
\begin{equation}
    |z_n-\zeta|^2= 4(1-\frac{1}{n^2})\sin^2\frac{1}{2\sqrt{n}}+\frac{1}{n^4}.\label{conrem}
\end{equation}
Since the series $\sum_n (1-\frac{1}{n^2})\sin^2 \frac{1}{2\sqrt{n}}$ diverges, so does the series $\sum_n |\zeta-z_n|^2.$ Hence by Proposition \ref{innerfunclassification}, $B\notin D(m_{z-\zeta}).$ Consequently, $B\notin \mbox{Mult}(D(m_{z-\zeta})).$ 

Moreover, from \eqref{conrem} we have 
$$|z_n-\zeta|^2\leqslant \frac{1}{n}(1-\frac{1}{n^2})+\frac{1}{n^4}\leqslant\frac{1}{n}. $$
Thus we have that $|z_n-\zeta|^4\leqslant\frac{1}{n^2}=1-|z_n|,$ which implies that $\{z_n\}\subseteq S_{1, 4}(\zeta).$ By Proposition \ref{HOB_D(mp)}, $B\in \mbox{Mult}(D(m_{(z-\zeta)^4})).$ Thus, $\mbox{Mult}(D(m_{z-\zeta}))\cap\mathcal B$ and  $\mbox{Mult}(D(m_{(z-\zeta)^4}))\cap\mathcal{B}$ are not equal. Therefore, in view of Lemma \ref{similaritymult}, we obtain the following result.
\begin{cor}
    For any $\zeta\in\mathbb T$, $\mbox{Mult}(D(m_{z-\zeta}))$ and   $\mbox{Mult}(D(m_{(z-\zeta)^4}))$ are not equal as vector spaces. Consequently,  the operators $(M_z, D(m_{z-\zeta}))$ and $(M_z, D(m_{(z-\zeta)^4}))$ are not similar.
\end{cor}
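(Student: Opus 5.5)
The corollary follows once we know the two multiplier algebras differ, which the paragraph just before it has already established. So the plan is to package that computation and then invoke Lemma \ref{similaritymult}.

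\textbf{Step 1 (the Blaschke product).} Fix $\zeta\in\mathbb T$ and let $B$ be the infinite Blaschke product with zeros $z_n=\zeta(1-\frac{1}{n^2})e^{i/\sqrt n}$. These zeros satisfy the Blaschke condition, since $1-|z_n|=1/n^2$.

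\textbf{Step 2 ($B\notin \mbox{Mult}(D(m_{z-\zeta}))$).} Take $f=z-\zeta$. Proposition \ref{innerfunclassification} says $B\in D(m_f)$ forces $\sum_n|f(z_n)|^2=\sum_n|z_n-\zeta|^2<\infty$. By \eqref{conrem}, $|z_n-\zeta|^2\gtrsim \sin^2\frac{1}{2\sqrt n}\asymp \frac1n$, so this series diverges. Hence $B\notin D(m_f)$. Since $1\in D(m_f)$, every multiplier lies in $D(m_f)$, and therefore $B\notin \mbox{Mult}(D(m_f))$.

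\textbf{Step 3 ($B\in \mbox{Mult}(D(m_{(z-\zeta)^4}))$).} From \eqref{conrem}, $|z_n-\zeta|^2\le 1/n$, so $|z_n-\zeta|^4\le 1/n^2=1-|z_n|$. Thus $\{z_n\}\subseteq S_{1,4}(\zeta)$. Now apply Proposition \ref{HOB_D(mp)} with $m=4$ and $p=(z-\zeta)^4$; this $p$ satisfies $p^{(k)}(\zeta)=0$ for $k\le 3$. The proposition, which rests on Lemma \ref{HO_BlaschkeinMult} and Proposition \ref{sufficient}, gives $B\in\mbox{Mult}(D(m_p))$.

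\textbf{Step 4 (conclusion).} Steps 2 and 3 show that $B$ belongs to one multiplier algebra but not the other, so the two algebras are not equal as vector spaces. For non-similarity, recall that both $D(m_{z-\zeta})$ and $D(m_{(z-\zeta)^4})$ are reproducing kernel Hilbert spaces on $\mathbb D$ in which polynomials are dense and on which $M_z$ is bounded, as noted in the introduction. The contrapositive of Lemma \ref{similaritymult} then shows the two operators are not similar.

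The only step requiring care is the divergence in Step 2, specifically the lower bound $\sin^2\frac{1}{2\sqrt n}\ge \frac{c}{n}$. This follows from $\sin x\ge 2x/\pi$ on $[0,\pi/2]$; everything else is direct citation.
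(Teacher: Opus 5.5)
Your proposal is correct and is essentially the paper's own argument. You use the same Blaschke product with zeros $z_n=\zeta(1-\frac{1}{n^2})e^{i/\sqrt n}$, Proposition~\ref{innerfunclassification} to exclude it from $\mbox{Mult}(D(m_{z-\zeta}))$, the inclusion $\{z_n\}\subseteq S_{1,4}(\zeta)$ with Proposition~\ref{HOB_D(mp)} to place it in $\mbox{Mult}(D(m_{(z-\zeta)^4}))$, and Lemma~\ref{similaritymult} for non-similarity. Your explicit bound via $\sin x\geqslant 2x/\pi$ (applied to the terms with $n\geqslant 2$, which suffices for divergence) and your remark that $1\in D(m_{z-\zeta})$ fill in steps the paper leaves implicit.
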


\vspace{0.1in}
\noindent\textbf{Data Availability:}
Data sharing is not applicable to this article, as no datasets were generated or analyzed during
the current study. In case any datasets are generated during and/or analyzed during the current study, they must
be available from the corresponding author on reasonable request.

\vspace{0.1in}
\noindent\textbf{Conflict of interest:} There is no conflict of interest.

\end{document}